\newcommand{\pref}[1]{(\ref{#1})}
\newcommand{\csee}[1]{(see \cref{#1})}
\newcommand{\fullcref}[2]{\cref{#1}\pref{#1-#2}}
\newcommand{\fullcsee}[2]{(see \fullcref{#1}{#2})}
\newcommand{\integer}{\mathbb{Z}}
\renewcommand{\natural}{\mathbb{N}}
\newcommand{\iso}{\cong}
\newcommand{\normal}{\triangleleft}
\newcommand{\quot}[1]{\overline{#1}}
\newcommand{\voltage}{\Pi}
\renewcommand{\pmod}[1]{\ (\mathop{\rm mod} #1)}
\DeclareMathOperator{\Cay}{Cay}
\DeclareMathOperator{\Aut}{Aut}
\numberwithin{equation}{section}
\newtheorem{CASE}{Case}
\newtheorem{RomanCase}{Case} 
	\numberwithin{RomanCase}{section}
\newtheorem{thm}[CASE]{Theorem}
\newtheorem{prop}[CASE]{Proposition}
\newtheorem{lem}[CASE]{Lemma}
\newtheorem{cor}[CASE]{Corollary}
\newtheorem{mainthm}[CASE]{Theorem}
\newtheorem{FGL}[CASE]{Lemma}
\newtheorem{MarusicMethod}[CASE]{Lemma}
\newtheorem{MarusicMethodCor}[CASE]{Corollary}
\crefname{CASE}{case}{cases}
\crefname{thm}{theorem}{theorems}
\crefname{prop}{proposition}{propositions}
\crefname{lem}{lemma}{lemmas}
\crefname{cor}{corollary}{corollaries}
\crefname{mainthm}{theorem}{theorems}
\crefname{FGL}{lemma}{lemmas}
\crefname{MarusicMethod}{method}{methods}
\crefname{MarusicMethodCor}{method}{methods}
\theoremstyle{definition}
\newtheorem*{ack}{Acknowledgments}
\newtheorem{assump}[CASE]{Assumption}
\newtheorem{defn}[CASE]{Definition}
\newtheorem{rem}[CASE]{Remark}
\theoremstyle{remark}
\newtheorem{notation}[CASE]{Notation}
\crefname{rem}{remark}{remarks}
\crefname{assump}{assumption}{assumptions}
\crefname{notation}{notation}{notations}
\crefname{figure}{figure}{figures}
\numberwithin{CASE}{section}
\numberwithin{equation}{CASE}
 \newcounter{subcase}
 \newenvironment{subcase}[1][\unskip]{\refstepcounter{subcase}\bf
 \medskip \noindent \hskip\parindent Subcase \thesubcase\ #1. \it}{\unskip\upshape}
\numberwithin{subcase}{CASE}
\numberwithin{subcase}{RomanCase}
\renewcommand{\thesubcase}{\roman{subcase}}
\crefname{subcase}{subcase}{subcases}
\newenvironment{case}[1][\unskip]{\refstepcounter{case}%
 \em
 \medskip \noindent Case \thecase\ #1.\ }{\unskip\upshape}
 \newcommand{\thecase}{\arabic{case}}
 \newcounter{case}
\crefname{case}{case}{cases}
\renewenvironment{claim}[1][\unskip]{\em
 \medbreak \indent Claim.\ }{\unskip\upshape}
\newcommand{\MR}[1]{\href{http://www.ams.org/mathscinet-getitem?mr=#1}{MR~#1}}
\newcommand{\refnote}[1]{%
	\marginpar{%
		\color{blue}
		\vbox to 0pt{\vss
		$\begin{pmatrix} \text{note} \\ \text{\cref{#1}} \end{pmatrix}$%
		\vskip -11pt}}}
\newcommand{\refnotelower}[1]{%
	\marginpar{%
		\color{blue}
		\vbox to 0pt{\vskip 9pt
		$\begin{pmatrix} \text{note} \\ \text{\cref{#1}} \end{pmatrix}$%
		\vss}}}
\theoremstyle{definition}
\newtheorem{aid}[CASE]{}
\newcommand{\oldendaid}{}
\let\oldendaid=\endaid
\renewcommand{\endaid}{\oldendaid\bigskip\hrule width\textwidth \bigbreak}
\renewcommand{\l@section}{\@dottedtocline{1}{3.5em}{1em}}
\renewcommand{\l@subsection}{\@dottedtocline{2}{6em}{1.5em}}
\renewenvironment{frontmatter}
{\thispagestyle{amctitle}%
\setcounter{page}{\@startpage}%
\vskip 10pt%
\centering}
{\vskip 20pt%
\blfootnote{\raggedright \ifnum\@authorcount=1\textit{E-mail address:}\else\textit{E-mail addresses:}\fi ~\@emails}%
}
\renewcommand{\ps@amctitle}{%
  \renewcommand\@oddhead{}
  \let\@evenhead\@oddhead
  \renewcommand\@evenfoot{\hfil}
  \let\@oddfoot\@evenfoot
}
\def\@oddrunninghead{Odd-order Cayley graphs with commutator 
subgroup of order~$pq$ are hamiltonian}
\def\@evenrunninghead{Dave Witte Morris}
\newcommand{\noprelistbreak}{\smallskip\@nobreaktrue\nopagebreak} 
\begin{document}

\begin{frontmatter}

\begingroup
\mathversion{bold} 
\titledata{Odd-order Cayley graphs with commutator 
\\ subgroup of order~$pq$ are hamiltonian}{}
{(version of \today)} 
\endgroup

\authordata{Dave Witte Morris}
{Department of Mathematics and Computer Science,
University of Lethbridge, Lethbridge, Alberta, T1K~3M4, Canada}
{Dave.Morris@uleth.ca, http://people.uleth.ca/$\sim$dave.morris/}
{}

\keywords{Cayley graph, hamiltonian cycle, commutator subgroup}
\msc{05C25, 05C45}

\begin{abstract}
We show that if $G$ is a nontrivial, finite group of odd order, whose commutator subgroup $[G,G]$ is cyclic of order~$p^\mu q^\nu$, where $p$ and~$q$ are prime, then every connected Cayley graph on~$G$ has a hamiltonian cycle.
\end{abstract}

\end{frontmatter}

\section{Introduction}

It has been conjectured that there is a hamiltonian cycle in every connected Cayley graph on any finite group, but all known results on this problem have very restrictive hypotheses (see \cite{CurranGallian-survey, PakRadoicic-survey, WitteGallian-survey} for surveys). One approach is to assume that the group is close to being abelian, in the sense that its commutator subgroup is small. This is illustrated by the following theorem that was proved in a series of papers by Maru\v si\v c \cite{Marusic-HamCircCay}, Durnberger \cite{Durnberger-semiprod,Durnberger-prime}, and  Keating-Witte \cite{KeatingWitte}:

\begin{thm}[D.\,Maru\v si\v c, E.\,Durnberger, K.\,Keating, and D.\,Witte, 1985] \label{KeatingWitteThm}
If $G$ is a nontrivial, finite group, whose commutator subgroup $[G,G]$ is cyclic of order~$p^\mu$, where $p$~prime and $\mu \in \natural$, then every connected Cayley graph on~$G$ has a hamiltonian cycle.
\end{thm}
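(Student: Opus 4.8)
The plan is to induct on~$\mu$, peeling one copy of $\integer_p$ off the commutator subgroup at a time and lifting a hamiltonian cycle through the resulting $\integer_p$-quotient with the Factor Group Lemma. For the base case $\mu=0$ the group~$G$ is abelian, and it is classical that every connected Cayley graph on a nontrivial finite abelian group has a hamiltonian cycle; so I assume $\mu\ge 1$ and that the theorem already holds for every group whose commutator subgroup is cyclic of order~$p^{\mu-1}$. (When $\mu=1$ the quotient below is abelian, so that base case absorbs the prime-order situation handled by Maru\v si\v c and Durnberger.)

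For the inductive step, note that since $G'=[G,G]\iso\integer/p^\mu\integer$ is cyclic it has a unique subgroup~$H$ of order~$p$; this $H$ is characteristic in~$G'$, and $G'\normal G$, so $H\normal G$. Put $\quot G=G/H$. Because $H\le G'$ we have $[\quot G,\quot G]=G'H/H=G'/H$, which is cyclic of order~$p^{\mu-1}$, so the inductive hypothesis applies to~$\quot G$ for \emph{every} generating set. Now let $\Cay(G;S)$ be connected. Its image $\Cay(\quot G;\quot S)$ is connected and therefore has a hamiltonian cycle~$\quot C$. Viewing $\Cay(G;S)$ as a regular $\integer_p$-cover of $\Cay(\quot G;\quot S)$ with deck group~$H$, the Factor Group Lemma says that if the voltage $\voltage(\quot C)\in H$ — the product of the edge labels around~$\quot C$ — is nontrivial, then $\quot C$ lifts to a single hamiltonian cycle of $\Cay(G;S)$. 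Since $H\iso\integer_p$ has prime order, ``nontrivial'' is the same as ``generates~$H$,'' so this is exactly what is needed.

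The crux is making the voltage nonzero, and this is where I expect the real difficulty to lie. Connectivity of the cover $\Cay(G;S)$ forces the voltages of \emph{all} closed walks in the base to generate~$H$, so some closed walk realizes a nontrivial voltage; the obstacle is to realize it on a \emph{hamiltonian} cycle. This is precisely the situation Maru\v si\v c's Method is designed for: starting from~$\quot C$, one performs local re-routings that preserve hamiltonicity while altering the voltage in a controlled way, the available changes being governed by commutators $[\quot s,\quot t]$ of generators. Because $H\le[G,G]$, such commutators can meet~$H$ nontrivially, so if $\quot C$ has zero voltage a suitable surgery yields a hamiltonian cycle whose voltage generates~$H$, and the Factor Group Lemma finishes the lift. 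The hard part is the bookkeeping behind this last step: showing \emph{uniformly over all generating sets}~$S$ that some hamiltonian cycle — not merely some closed walk — in every quotient has nonzero voltage. This forces a case analysis according to the structure of~$S$ modulo~$G'$ and is the genuine combinatorial content (the part carried out by Keating–Witte), with the abelian and prime-order results supplying the base case and the elementary surgeries.
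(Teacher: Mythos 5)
The theorem you were asked to prove is not actually proved in this paper: it is quoted as known, from the series of papers of Maru\v si\v c \cite{Marusic-HamCircCay}, Durnberger \cite{Durnberger-semiprod,Durnberger-prime}, and Keating--Witte \cite{KeatingWitte}, and the paper only reuses its ingredients (\cref{FGL}, \cref{FreeLunch}, \cref{MarusicMethod}). So your proposal must stand on its own, and it does not: the entire combinatorial core is deferred. The sentence ``if $\quot{C}$ has zero voltage a suitable surgery yields a hamiltonian cycle whose voltage generates~$H$'' \emph{is} the theorem, not a routine step. \Cref{MarusicMethod} is only a reduction; its starred condition must be verified by an elaborate case analysis over the possible structures of the generating set, and that analysis occupies \S4--\S5 of \cite{KeatingWitte} (it is mirrored, for the two-prime case, by the many cases in Sections 3--6 of the present paper). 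Writing that this is ``the part carried out by Keating--Witte'' concedes that you are citing the result rather than proving it.

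Beyond the deferral, the inductive scaffolding itself does not reduce the difficulty. Your inductive hypothesis produces \emph{some} hamiltonian cycle in $\Cay(G/H;S)$ with no control on its voltage, and the surgeries you then invoke are not available there: the re-routings that change voltages by explicit conjugates of commutators (cf.\ \cref{StandardAlteration}) are performed in the \emph{abelian} quotient $\Cay(G/G';S)$, whereas for $\mu \ge 2$ the quotient $G/H$ is still nonabelian, and controlling voltages of hamiltonian cycles in a nonabelian Cayley graph is a problem of essentially the same difficulty as the one being solved. This is why the actual proof does not induct on~$\mu$ at all: by the Frattini argument (\cref{FreeLunch}, i.e.\ \cite[Lem.~3.2]{KeatingWitte}), a voltage that generates $G'/\Phi(G') \iso \integer_p$ automatically generates~$G'$, so one reduces in a single step to $|G'| = p$ and then works entirely in the abelian graph $\Cay(G/G';S)$, proving the stronger statement that there are hamiltonian cycles with a common edge whose voltages differ by a generator of~$G'$ (this is \cref{G'=p} in the paper). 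Note also that your bottom-up peeling buys nothing even in principle: for $\mu \ge 2$ the minimal subgroup satisfies $H \subseteq \Phi(G') = (G')^p$, so an element generating $G'$ modulo~$H$ already generates~$G'$, and any completed version of your inductive step collapses back to the standard one-step argument. To salvage the induction you would need a strengthened hypothesis controlling voltages in nonabelian quotients, which neither the cited papers nor this one provides.
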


Under the additional assumption that $G$ has odd order, we extend this theorem, by allowing the order of $[G,G]$ to be the product of two prime-powers:

\begin{mainthm} \label{MAINTHM}
If $G$ is a nontrivial, finite group of odd order, whose commutator subgroup $[G,G]$ is cyclic of order~$p^\mu q^\nu$, where $p$ and~$q$ are prime, and $\mu,\nu \in \natural$, then every connected Cayley graph on~$G$ has a hamiltonian cycle.
\end{mainthm}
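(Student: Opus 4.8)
The plan is to prove this by induction on the order of the group, reducing the problem to cases that are already understood. Since $[G,G]$ is cyclic of order $p^\mu q^\nu$, it has a Sylow $p$-subgroup $P$ and Sylow $q$-subgroup $Q$, both of which are characteristic (hence normal in $G$) and cyclic. The key structural observation is that $[G,G] = P \times Q$, so we have two natural quotient maps: $G \to G/P$, where the commutator subgroup becomes cyclic of order $q^\nu$, and $G \to G/Q$, where it becomes cyclic of order $p^\mu$. By \cref{KeatingWitteThm}, every connected Cayley graph on each of these two quotients already has a hamiltonian cycle.

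Let me describe the main line of attack.

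First I would set up the reduction machinery. Given a connected Cayley graph $\Cay(G;S)$, the standard tool is the Factor Group Lemma: a hamiltonian cycle in a quotient $\Cay(G/N; \overline{S})$, when its voltage (the product of generators around the cycle) generates $N$, lifts to a hamiltonian cycle in $\Cay(G;S)$, provided $N$ is cyclic and the cycle has the right length divisibility. So the strategy is to find, in one of the two quotients $G/P$ or $G/Q$ (say $G/P$, whose commutator subgroup has order $q^\nu$), a hamiltonian cycle whose voltage generates the remaining normal subgroup $P \cong \integer/p^\mu$. Because $G$ has \emph{odd} order, we have substantial control: there are no elements of order $2$, which rules out many of the pathological small configurations that obstruct hamiltonicity arguments, and it makes the relevant counting (the voltage must hit a generator of a cyclic $p$-group) much more tractable.

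The core of the argument, and the step I expect to be the main obstacle, is producing a hamiltonian cycle in the quotient with \emph{prescribed} voltage behaviour, rather than merely some hamiltonian cycle. Knowing from \cref{KeatingWitteThm} that \emph{a} hamiltonian cycle exists in $\Cay(G/P;\overline{S})$ is not enough; one must either engineer a cycle whose voltage is a generator of $P$, or show that by modifying a given cycle (for instance, by rerouting a portion using Maru\v si\v c's Method to alter the voltage in a controlled way) one can realize a generating voltage. The delicate point is that the two primes $p$ and $q$ interact: after handling one prime by passing to a quotient, one must ensure the construction in that quotient can be lifted without destroying control over the other prime. I would therefore organize the proof around a careful case analysis according to how $P$ and $Q$ sit inside $G$ (central versus acted upon nontrivially, and whether the generating set $S$ projects onto a generating set that already sweeps out enough of the cyclic factor), invoking the single-prime \cref{KeatingWitteThm} as a black box in the base quotients and concentrating all the new work on the voltage-control and lifting step for the two-prime situation. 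The restriction to odd order is what makes this case analysis finite and each case resolvable.
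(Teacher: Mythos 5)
Your proposal correctly names the right tools (the Factor Group Lemma, voltage control, rerouting), but the reduction you build the argument around has a genuine gap, and it hides rather than removes the two-prime difficulty. You propose to quotient by $P = \integer_{p^\mu}$ alone, invoke \cref{KeatingWitteThm} in $G/P$ as a black box, and then ``engineer'' a hamiltonian cycle in $\Cay(G/P;S)$ whose voltage generates~$P$. There are two problems. First, $G/P$ is in general non-abelian (its commutator subgroup is $G'/P \cong \integer_{q^\nu}$), and every piece of voltage-control machinery available --- \cref{MarusicMethod}, \cref{StandardAlteration}, \cref{KW43} --- operates on hamiltonian cycles in an \emph{abelian} quotient $\langle S_0\rangle/G'$, where a rerouting changes the voltage by an explicit product of commutators; none of it applies in $\Cay(G/P;S)$, and a black-box existence theorem carries no voltage information at all. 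Second, and more fundamentally, the primes do not decouple. The cycles one can actually produce in $\Cay(G/P;S)$ --- those obtained by applying \cref{FGL} (with $N = G'/P$) to a hamiltonian cycle $(s_i)_{i=1}^m$ of $\Cay(G/G';S)$, which is how the Keating--Witte machinery itself proceeds --- require $g = \prod_i s_i$ to project to a generator of $G'/P$, and the resulting cycle $(s_1,\ldots,s_m)^{q^\nu}$ has $P$-voltage $g^{q^\nu}$; since $G'$ is abelian and $\gcd(p,q)=1$, this generates $P$ if and only if $g$ projects to a generator of~$P$. The two requirements together say exactly that $g$ generates $G' \cong \integer_{p^\mu}\times\integer_{q^\nu}$, which is precisely the two-prime voltage problem over the abelianization that your reduction was meant to avoid. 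So the reduction is circular: all of the content of the theorem remains, and your proposal supplies none of it beyond the (correct) identification of where the difficulty lies.

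What the paper actually does is work directly over $\quot{G} = G/G'$: after \cref{FreeLunch} reduces to $G' = \integer_{pq}$, \cref{MarusicMethod} and \cref{MarusicMethod34} reduce the task to exhibiting three or four hamiltonian cycles in $\Cay(\quot{G};S)$ with a common oriented edge whose voltage differences generate prescribed subgroups ($\integer_p$, $\integer_q$, or all of $G'$), and this is carried out by a long case analysis on how the commutators of generators sit inside $G'$ (\cref{bina} through \cref{G'NotIn<abc>}), with the hardest configuration $|\quot{a}| = |\quot{b}| = 3$ occupying all of \cref{a=b=3}. It is telling that the paper \emph{does} use your route --- \cref{FGL} with $N = \integer_{q^\nu}$ over a non-abelian quotient --- but only in one exceptional situation (\cref{9pq-hard}) where the abelianization approach fails, and there it must construct the hamiltonian cycle in $\integer_{p^\mu} \rtimes (\integer_3 \times \integer_3)$ entirely by hand (\cref{TriangleHC}) and compute its voltage explicitly; that single special case costs several pages, which is a fair measure of what your ``main line of attack'' would cost in general. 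Two smaller corrections: \cref{FGL} has no ``length divisibility'' hypothesis (only that $N$ be cyclic and normal and that the voltage generate it), and the odd-order hypothesis is not what ``makes the case analysis finite'' --- it is used for specific facts, chiefly that no element of a group of odd order can invert a nontrivial element of $G'$ (so that products of the form $\gamma\gamma^{a}$ still generate), and the odd-index hypothesis in Alspach's lifting theorem (\cref{GenNormal}).
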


\begin{rem}
Of course, we would like to prove the conclusion of \cref{MAINTHM} without the assumption that $|G|$ is odd, or with a weaker assumption on the order of $[G,G]$.
\end{rem}

If $\mu,\nu \le 1$, then there is no need to assume that $[G,G]$ is cyclic:

\begin{cor} \label{G'=pq}
If $G$ is a nontrivial, finite group of odd order, whose commutator subgroup $[G,G]$ has order~$pq$, where $p$ and~$q$ are distinct primes, then every connected Cayley graph on~$G$ has a hamiltonian cycle.
\end{cor}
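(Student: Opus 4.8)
The plan is to deduce \cref{G'=pq} directly from \cref{MAINTHM}. Since $[G,G]$ has order $pq = p^1q^1$ and $|G|$ is odd, the hypotheses of \cref{MAINTHM} are met except possibly for the requirement that $[G,G]$ be cyclic. So the entire task reduces to the purely group-theoretic claim that a group of odd order cannot have the nonabelian group of order~$pq$ as its commutator subgroup; once $[G,G]$ is known to be cyclic, \cref{MAINTHM} applies verbatim.

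To set this up I would first recall the classification of groups of order $pq$ with $p < q$: any such group has a unique, hence normal, Sylow $q$-subgroup $Q \iso \integer/q\integer$, and it is either cyclic or the unique nonabelian group in which a Sylow $p$-subgroup acts nontrivially on~$Q$. Since an abelian group of order $pq$ is automatically cyclic, it suffices to rule out the nonabelian case. Suppose then that $N := [G,G]$ is nonabelian of order~$pq$. Because $Q$ is the unique subgroup of $N$ of order~$q$, it is characteristic in~$N$, and since $N \normal G$ we get $Q \normal G$. Conjugation now gives a homomorphism $G \to \Aut(Q) \iso \integer/(q-1)\integer$, whose target is cyclic and therefore abelian; hence $G/C_G(Q)$ is abelian, which forces $[G,G] \subseteq C_G(Q)$. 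In other words, $N$ centralizes~$Q$. But $Q \subseteq N$, and if $N$ centralized~$Q$ then $Q$ would lie in $Z(N)$, so that $N/Z(N)$ would be a quotient of the cyclic group $N/Q$, hence cyclic, making $N$ abelian — a contradiction. Therefore $N$ is cyclic and \cref{MAINTHM} finishes the proof.

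I expect the only real obstacle to be the automorphism-group computation isolating the contradiction above, and it is entirely routine; all of the substantive work sits inside \cref{MAINTHM}. It is worth noting that this cyclicity argument never uses that $|G|$ is odd — that hypothesis is needed solely to invoke \cref{MAINTHM} — so the oddness assumption in \cref{G'=pq} is inherited rather than genuinely exploited at this stage.
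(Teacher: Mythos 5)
Your proposal is correct and follows essentially the same route as the paper: both reduce to showing $G'$ is cyclic, then pass through the unique (hence normal in~$G$) Sylow $q$-subgroup $Q$ of~$G'$, use that $\Aut(Q)$ is abelian to force $G' \subseteq C_G(Q)$, and conclude from $Q \subseteq Z(G')$ with $G'/Q$ cyclic that $G'$ is abelian, hence cyclic, so \cref{MAINTHM} applies. The only difference is cosmetic — you package the argument as a contradiction against the nonabelian group of order~$pq$, while the paper argues directly — and your closing observation that oddness of $|G|$ is used only to invoke \cref{MAINTHM} is accurate.
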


This yields the following contribution to the ongoing search \cite{M2Slovenian-LowOrder} for hamiltonian cycles in Cayley graphs on groups whose order has few prime factors:

\begin{cor} \label{9pq}
If $p$ and~$q$ are distinct primes, then every connected Cayley graph of order $9pq$ has a hamiltonian cycle.
\end{cor}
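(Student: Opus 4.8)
The plan is to write $G' = [G,G]$ and note that $|G'|$ divides $|G| = 9pq$, so at most three primes are involved; the goal is to reduce to \cref{KeatingWitteThm}, \cref{MAINTHM}, and~\cref{G'=pq}. I would organize the proof according to whether $2$ or~$3$ lies in $\{p,q\}$, since these determine the size of a Sylow $3$-subgroup and the parity of~$|G|$, which are what govern the difficulty.

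The main case is $p, q \ge 5$, so that $|G| = 3^2 p q$ is odd with three distinct prime divisors. Here I would prove that $3 \nmid |G'|$. Let $P$ be a Sylow $3$-subgroup; it is abelian of order~$9$, and $\Aut(P)$ is $\integer_6$ or $\GL(2,3)$, of order $6$ or~$48$, whose only odd prime divisor is~$3$. Since $p, q \ge 5$, the group $N_G(P)/C_G(P)$ embeds in $\Aut(P)$ and has odd order, so it is a $3$-group; but $|G|$ has $3$-part exactly~$9$, so $P$ is the unique (hence normal) Sylow $3$-subgroup of $N_G(P)$. Every $3$-element of $N_G(P)$ therefore lies in the abelian group~$P$ and centralizes it, forcing $N_G(P) = C_G(P)$. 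Burnside's normal $p$-complement theorem then yields $K \normal G$ with $|K| = pq$ and $G = K \rtimes P$; since $G/K \iso P$ is abelian we get $G' \le K$, so $|G'|$ divides~$pq$. Now \cref{G'=pq} handles $|G'| = pq$ (it does not require $G'$ to be cyclic), \cref{KeatingWitteThm} handles the prime orders $|G'| \in \{p,q\}$, and $|G'| = 1$ means $G$ is abelian.

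It remains to treat the cases where $2$ or~$3$ lies in $\{p,q\}$. If $3 \in \{p,q\}$ then $|G| = 27r$, and if $2 \in \{p,q\}$ then $|G| = 54$ or $|G| = 18r$; in each of these a Sylow $3$-subgroup may be larger or may admit a nontrivial automorphism realized inside~$G$ (by an involution, or by a $3$-element when the Sylow $3$-subgroup has order~$27$), so the argument above collapses. Now $G'$ can fail to be cyclic and can have a prime-power order not reached by \cref{KeatingWitteThm}: for example $\integer_3^3 \rtimes \integer_{13}$, of order $351 = 9 \cdot 3 \cdot 13$, has $G' = \integer_3^3$, a non-cyclic group of prime-power order lying outside the hypotheses of all three cited results. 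I would dispatch these remaining families by applying the hamiltonian-cycle machinery (the Factor Group Lemma and Maru\v{s}i\v{c}'s method) to suitable normal subgroups, or by invoking known results on Cayley graphs whose order has few prime factors \cite{M2Slovenian-LowOrder}.

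The main obstacle is exactly this last step. The non-cyclic commutator subgroups of prime-power order that appear once $9 \mid |G|$ with $3 \in \{p,q\}$ (typified by $\integer_3^3$), together with the even-order groups, are precisely the configurations that escape \cref{KeatingWitteThm}, \cref{MAINTHM}, and~\cref{G'=pq}, and disposing of them is where the substance of the proof lies.
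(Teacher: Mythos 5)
Your main case ($p,q \ge 5$) is correct and is exactly the paper's argument: since $|G| = 9pq$ is then odd and coprime to $|\!\Aut(\integer_9)| = 6$ and $|\!\Aut(\integer_3 \times \integer_3)| = 48$, Burnside's transfer theorem produces a normal subgroup $N$ of order~$pq$; as $G/N$ has order~$9$ it is abelian, so $G' \subseteq N$ and $|G'|$ divides~$pq$, whence \cref{G'=pq} or \cref{KeatingWitteThm} (or the abelian case) finishes. Your verification that $N_G(P) = C_G(P)$ is a perfectly adequate way to justify invoking Burnside.

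The genuine gap is everything after that. You correctly isolate the residual cases $2 \in \{p,q\}$ (order $18r$) and $3 \in \{p,q\}$ (order $27r$), and you even exhibit $\integer_3^3 \rtimes \integer_{13}$ to show that they escape \cref{KeatingWitteThm}, \cref{MAINTHM}, and \cref{G'=pq} --- but you then leave them unproved, proposing only to ``apply the hamiltonian-cycle machinery to suitable normal subgroups'' or to invoke unspecified known results. That is not a proof, and these cases cannot be dispatched by a routine application of the Factor Group Lemma: the order-$27p$ case alone required a separate sixteen-page paper of Ghaderpour and Morris \cite{GhaderpourMorris-27p}, precisely because of non-cyclic commutator subgroups like the one in your example. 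The paper closes both cases by citation: when $2 \in \{p,q\}$, the graph has order $18r$ and \cite[Prop.~9.1]{M2Slovenian-LowOrder} applies; when $3 \in \{p,q\}$, the graph has order $27r$ and \cref{27p} applies --- in particular, your problematic example of order $351 = 27 \cdot 13$ is exactly covered by \cref{27p}, a result stated in the paper's preliminaries that your proposal never uses. To complete your argument you need these two citations (or independent proofs of their content); as written, the proposal proves only the case $p,q \ge 5$.
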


\tableofcontents 

\begin{ack}
I thank D.\,Maru\v si\v c for suggesting this research problem. I also thank him, K.\,Kutnar, and other members of the Faculty of Mathematics, Natural Sciences, and Information Technologies of the University of Primorska (Koper, Slovenia), for their excellent hospitality that supported the early stages of this work. 
\end{ack}

\section{Preliminaries}

\subsection{Assumptions, definitions, and notation}

\begin{assump} \ 
\noprelistbreak
	\begin{enumerate}
	\item $G$ is always a finite group.
	\item $S$ is a generating set for~$G$.
	\end{enumerate}
\end{assump}

\begin{defn}
The \emph{Cayley graph} $\Cay(G;S)$ is the graph whose vertex set is~$G$, with an edge from $g$ to~$gs$ and an edge from~$g$ to~$gs^{-1}$, for every $g \in G$ and $s \in S$.
\end{defn}

\begin{notation} \ 
\noprelistbreak
	\begin{itemize}
	\item We let $G' = [G,G]$ and $\quot{G} = G/G'$. Also, for $g \in G$, we let $\quot{g} = gG'$ be the image of~$g$ in~$\quot{G}$.
	\item For $g,h \in G$, we let $g^h = h^{-1} g h$ and $[g,h] = g^{-1} h^{-1} g h$.
	\item If $H$ is an abelian subgroup of~$G$ and $k \in \integer$, we let
	$$ H^k = \{\, h^k \mid h \in H \,\} .$$
This is a subgroup of~$H$ (because $H$ is abelian).
\end{itemize}
\end{notation}

\begin{notation}
For $g \in G$ and $s_1,\ldots,s_n \in S \cup S^{-1}$, we use $[g](s_1,\ldots,s_n)$ to denote the walk in $\Cay(G;S)$ that visits (in order), the vertices
	$$ g, \, gs_1, \ gs_1 s_2, \, g s_1 s_2 s_3, \ \ldots, \, gs_1s_2\cdots s_n .$$
We often write $(s_1,\ldots,s_n)$ for $[e](s_1,\ldots,s_n)$.
\end{notation}

\begin{defn}
Suppose
\noprelistbreak
 	\begin{itemize}
	\item $N$ is a normal subgroup of~$G$,
	and
	\item $C = (s_i)_{i=1}^n$ is a hamiltonian cycle in $\Cay(G/N;  S)$.
	\end{itemize}
The \emph{voltage} of~$C$ is $\prod_{i=1}^n s_i$. This is an element of~$N$, and it may be denoted $\voltage C$.
\end{defn}

\begin{rem} \label{gVoltage}
If $C = [g](s_1,\ldots,s_n)$, then
	$ \prod_{i=1}^n s_i  = (\voltage C)^g $.
\end{rem}

\begin{proof}
There is some $\ell$ with $\prod_{i=1}^\ell s_i \in g^{-1} N$. Then 
	$$ C = (s_{\ell+1}, s_{\ell+2}, \ldots, s_n, s_1, s_2, \ldots, s_\ell ) ,$$
so
	\begin{align*}
	(\voltage C)^g
	&= g^{-1} (s_{\ell+1} s_{\ell+2} \cdots s_n \, s_1 s_2 \cdots s_\ell ) g
	\\&= 
\left( \prod_{i=1}^\ell s_i \right)  \left( \prod_{i=\ell+1}^n s_i \right)  \left( \prod_{i=1}^\ell s_i \right)  \left( \prod_{i=1}^\ell s_i \right) ^{-1}
	\\&= 	\prod_{i=1}^n s_i
	.\qedhere \end{align*}
\end{proof}

\subsection{Factor Group Lemma and Maru\v si\v c's Method}

\begin{FGL}[``Factor Group Lemma'' {\cite[\S2.2]{WitteGallian-survey}}] \label{FGL}
Suppose
\noprelistbreak
 \begin{itemize}
 \item $N$ is a cyclic, normal subgroup of~$G$,
 \item $(s_i)_{i=1}^m$ is a hamiltonian cycle in $\Cay(G/N;S)$,
 and
 \item the product $s_1s_2\cdots s_m$ generates~$N$.
 \end{itemize}
 Then $(s_1,s_2,\ldots,s_m)^{|N|}$ is a hamiltonian cycle in $\Cay(G;S)$.
 \end{FGL}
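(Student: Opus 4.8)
The plan is to verify directly that the closed walk
$W = (s_1,\ldots,s_m)^{|N|}$ meets the three requirements of a hamiltonian cycle in $\Cay(G;S)$: that it closes up, that its length is $|G|$, and that the $|G|$ vertices it visits are pairwise distinct. Write $n = |N|$, let $\sigma = s_1 s_2 \cdots s_m$ be the voltage of the given cycle (so by hypothesis $\sigma$ generates the cyclic group~$N$, and hence has order exactly~$n$), and for $0 \le j \le m$ let $P_j = s_1 s_2 \cdots s_j$ be the partial products, with $P_0 = e$ and $P_m = \sigma$.

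First I would track the vertices block by block. The product of the edge-labels within one copy of $(s_1,\ldots,s_m)$ is $\sigma$, regardless of where the block begins, so by left-multiplication the $k$-th copy (for $0 \le k \le n-1$) starts at $\sigma^{k}$, visits $\sigma^{k} P_0, \sigma^{k} P_1, \ldots, \sigma^{k} P_{m-1}$, and ends at $\sigma^{k} P_m = \sigma^{k+1}$. Thus $W$ visits exactly the vertices $\sigma^{k} P_j$ with $0 \le k \le n-1$ and $0 \le j \le m-1$; it returns to $\sigma^{n} = e$ because $\sigma$ has order~$n$, so $W$ is a closed walk of length $mn = |G/N|\,|N| = |G|$.

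It then remains to show that the $mn$ listed vertices are distinct, which I would do by projecting to $G/N$. Suppose $\sigma^{k} P_j = \sigma^{k'} P_{j'}$. Since $\sigma^k \in N$ and $N$ is normal, the left side lies in the coset $P_j N$ and the right side in $P_{j'} N$, so $P_j N = P_{j'} N$; but $P_0 N, \ldots, P_{m-1} N$ are the distinct vertices of the given hamiltonian cycle in $\Cay(G/N;S)$, whence $j = j'$. Cancelling $P_j$ gives $\sigma^{k} = \sigma^{k'}$, so $k \equiv k' \pmod n$ and therefore $k = k'$ in the stated range. A closed walk of length $|G|$ through $|G|$ distinct vertices is a hamiltonian cycle, completing the proof.

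There is little genuine obstacle here, as the argument is essentially bookkeeping; the point requiring care is to isolate the two facts that drive the distinctness step. The hypothesis that the voltage generates~$N$ forces $\sigma$ to have full order~$n$, which separates the powers $\sigma^{k}$, while normality of~$N$ keeps every block inside the same system of cosets traversed by the cycle downstairs, which separates the partial products~$P_j$. Note also that the conjugation bookkeeping of \cref{gVoltage} is not needed, precisely because each successive block begins at an element of~$N$, namely $e, \sigma, \sigma^2, \ldots$.
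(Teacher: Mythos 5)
Your proof is correct. There is nothing in the paper to compare it against: the Factor Group Lemma is stated there as a known result, cited to \S2.2 of the Witte--Gallian survey, with no proof given. Your block-by-block verification is the standard argument for this folklore lemma, and it uses each hypothesis exactly where it is needed: the voltage $\sigma$ generating the cyclic group~$N$ forces $|\sigma| = |N|$, which both closes the walk and separates the factors $\sigma^k$, while normality of~$N$ is what lets you place $\sigma^k P_j$ in the coset $P_j N$ so that distinctness of the vertices downstairs separates the partial products $P_j$.
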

 
 The following simple observation allows us to assume $|N|$ is square-free whenever we apply \cref{FGL}.
 
 \begin{lem}[{\cite[Lem.~3.2]{KeatingWitte}}] \label{FreeLunch}
Suppose
\noprelistbreak
 \begin{itemize}
 \item $N$ is a cyclic, normal subgroup of~$G$,
 \item $\underline{N} = N/\Phi$ is the maximal quotient of~$N$ that has square-free order (so $\Phi$ is the ``Frattini subgroup'' of~$N$),
 \item $\underline{G} = G/\Phi$,
 \item $(s_1,s_2,\ldots,s_m)$ is a hamiltonian cycle in $\Cay(\underline{G}/\underline{N};S)$,
 and
 \item the product $\underline{s_1} \, \underline{s_2} \cdots \underline{s_m}$ generates~$\underline{N}$.
 \end{itemize}
 Then $s_1s_2\cdots s_m$ generates~$N$, so  $(s_1,s_2,\ldots,s_m)^{|N|}$ is a hamiltonian cycle in $\Cay(G;S)$.
 \end{lem}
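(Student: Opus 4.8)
The plan is to deduce the conclusion from the Factor Group Lemma (\cref{FGL}); the only substantive point is to verify its final hypothesis, namely that the product $\pi := s_1 s_2 \cdots s_m$ generates~$N$. First I would record two easy reductions. Since $\Phi$ is the Frattini subgroup of the cyclic group~$N$, the quotient $\underline N = N/\Phi$ is isomorphic to $\prod_{p} \integer/p\integer$, with one factor $\integer/p\integer$ for each prime divisor~$p$ of~$|N|$; in particular $|\underline N|$ is square-free, as claimed. The third isomorphism theorem gives $\underline G/\underline N \iso G/N$, so a hamiltonian cycle in $\Cay(\underline G/\underline N; S)$ is precisely a hamiltonian cycle in $\Cay(G/N; S)$; thus $(s_i)_{i=1}^m$ is exactly the kind of input required by \cref{FGL}.

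The heart of the matter is the standard non-generator property of the Frattini subgroup: an element of~$N$ generates~$N$ if and only if its image generates $\underline N = N/\Phi$. I would prove the implication I need directly from the description of generators of a cyclic group. An element $\pi \in N$ generates~$N$ if and only if it avoids every maximal subgroup, and the maximal subgroups of~$N$ are exactly the subgroups $pN$ of prime index $p \mid |N|$; equivalently, $\pi$ generates~$N$ if and only if its image in each quotient $N/pN \iso \integer/p\integer$ is nonzero. On the other hand, an element of $\underline N \iso \prod_p \integer/p\integer$ generates $\underline N$ if and only if all of its coordinates are nonzero. Since the coordinates of $\underline\pi$ are precisely the images of~$\pi$ in the groups $\integer/p\integer$, the two conditions coincide: $\underline\pi$ generates $\underline N$ if and only if $\pi$ generates~$N$.

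It remains only to connect the hypotheses. Because $G \to \underline G$ is a homomorphism, $\underline{s_1}\,\underline{s_2}\cdots\underline{s_m} = \underline\pi$ is simply the image of $\pi = s_1 s_2 \cdots s_m$ under $N \to \underline N$. So the assumption that $\underline{s_1}\cdots\underline{s_m}$ generates $\underline N$ says exactly that $\underline\pi$ generates $\underline N$, whence by the previous paragraph $\pi$ generates~$N$. Now \cref{FGL}, applied to the cyclic normal subgroup~$N$ and the hamiltonian cycle $(s_i)_{i=1}^m$ in $\Cay(G/N; S)$ whose product generates~$N$, immediately gives that $(s_1,s_2,\ldots,s_m)^{|N|}$ is a hamiltonian cycle in $\Cay(G;S)$. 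The only real obstacle is the Frattini computation and the resulting generation equivalence; once that is established, the rest is bookkeeping plus a direct appeal to \cref{FGL}.
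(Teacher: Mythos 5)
Your proposal is correct. Note that the paper itself gives no proof of this lemma at all --- it is quoted with a citation to Keating--Witte \cite[Lem.~3.2]{KeatingWitte} --- so there is no internal argument to compare against; but your argument is exactly the standard one that citation refers to: the third isomorphism theorem identifies $\Cay(\underline{G}/\underline{N};S)$ with $\Cay(G/N;S)$, the Frattini non-generator property for the cyclic group~$N$ (an element of~$N$ generates~$N$ if and only if its image in $N/\Phi$ generates $N/\Phi$, since the maximal subgroups of~$N$ are precisely the subgroups of prime index and all of them contain~$\Phi$) shows that $s_1 s_2 \cdots s_m$ generates~$N$, and then \cref{FGL} finishes. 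Your coordinate-wise verification of the generation equivalence in $\underline{N} \iso \prod_p \integer/p\integer$ is sound; the only cosmetic blemish is the additive notation $pN$ for what the paper would write as~$N^p$.
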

 
 \begin{rem}[{cf.\ \cite[Thm.~5.1.1]{Gorenstein-FinGrps}}] \label{PQStillMin}
 When applying \cref{FreeLunch}, it is sometimes helpful to know that if
\noprelistbreak
 \begin{itemize}
 \item $N$, $\underline{N} = N/\Phi$, and $\underline{G} = G/\Phi$ are as in \cref{FreeLunch},
 and
 \item $S$ is a minimal generating set of~$G$.
 \end{itemize}
 Then $\underline{S}$ is a minimal generating set of~$\underline{G}$.
 \end{rem}

 \begin{MarusicMethod}[``Maru\v si\v c's Method'' {\cite{Marusic-HamCircCay}, cf.\ \cite[Lem.~3.1]{KeatingWitte}}] \label{MarusicMethod}
 Suppose 	
 \noprelistbreak
	\begin{itemize}
	\item $S_0 \subseteq S$,
	\item $\langle S_0 \rangle$ contains~$G'$,
	\item there are hamiltonian cycles $C_1,\ldots,C_r$ in $\Cay(\langle S_0 \rangle / G' ; S_0)$ that all have an oriented edge in common,
	and
	\item[\Large$*$\hskip-1.5pt]
	for every $\gamma \in G'$, there is some~$i$, such that $\bigl\langle \gamma \cdot \voltage C_i \bigr\rangle = G'$.
	\end{itemize}
Then there is a hamiltonian cycle in $\Cay(G/G';S)$ whose voltage generates~$G'$. Hence, \cref{FGL} provides a hamiltonian cycle in $\Cay(G;S)$.
 \end{MarusicMethod}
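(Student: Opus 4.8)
The plan is to produce a hamiltonian cycle in $\Cay(G/G';S) = \Cay(\quot{G};S)$ whose voltage generates~$G'$; \cref{FGL}, applied with $N = G'$, then delivers the hamiltonian cycle in $\Cay(G;S)$. Note first that for the starred hypothesis $\langle\gamma\cdot\voltage C_i\rangle = G'$ to be meaningful, $G'$ must be cyclic, so $\quot{G}$ and $G'$ are both abelian; I would use this throughout. Write $H = \langle S_0\rangle$ and $\quot{H} = H/G'$. Because $\quot{G}$ is abelian, $\quot{H}$ is a normal subgroup of~$\quot{G}$, so its cosets (the \emph{fibers}) partition the vertices of $\Cay(\quot{G};S)$, and the $S_0$-edges lying inside a single fiber form a translate of $\Cay(\quot{H};S_0) = \Cay(\langle S_0\rangle/G';S_0)$ — the translate is clean precisely because $\quot{G}$ is abelian, so conjugation by the coset representative is trivial. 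In particular, each $C_i$ can be transported verbatim into any fiber.

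The heart of the argument will be a single template: a hamiltonian cycle of $\Cay(\quot{G};S)$ that traverses one \emph{distinguished} fiber~$F$ by (a transported copy of) a path extracted from the $C_i$, and traverses all remaining fibers in a way that does not depend on~$i$. This is exactly what the shared oriented edge buys. Let $e_0$ be the common oriented edge of $C_1,\ldots,C_r$; deleting $e_0$ from $C_i$ yields a hamiltonian path $P_i$ of the fiber, and — since $e_0$ is traversed in the \emph{same direction} by every $C_i$ — all the $P_i$ share the same two endpoints in the same order. Hence inside~$F$ I may splice in any~$P_i$ interchangeably, attaching it to the rest of the cycle at the two fixed vertices (the transported endpoints of~$e_0$), so that swapping~$i$ alters only the interior of~$F$. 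The template itself is built by taking a hamiltonian cycle of the abelian base $\quot{G}/\quot{H}$ (a connected Cayley graph on an abelian group, hence hamiltonian), lifting it, and merging it with hamiltonian cycles of the fibers, cutting each fiber cycle at a copy of a common edge and reconnecting consecutive fibers by lifted base-edges. The main obstacle will be verifying that this merge produces a \emph{single} spanning cycle~$D_i$ through all $|\quot{G}|$ vertices rather than a disjoint union of cycles; here I would exploit the freedom to choose the base cycle and the cut-edges in the non-distinguished fibers to force closure, while the shared edge~$e_0$ guarantees that~$F$ behaves uniformly across all choices of~$i$.

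Granting the template, a short voltage computation finishes the proof, and its payoff is that I never need to compute anything explicitly. Since everything outside~$F$ is independent of~$i$, and inside~$F$ the path~$P_i$ differs from~$C_i$ only by the deleted edge~$e_0$, \cref{gVoltage} gives $\voltage D_i = \delta\cdot(\voltage C_i)^{h}$ for some $\delta\in G'$ and $h\in G$ that do \emph{not} depend on~$i$ (the factor~$\delta$ absorbs the other fibers, the connecting edges, and the deleted edge, while the conjugation by~$h$ records the position of~$F$; that $\delta\in G'$ follows since $\voltage D_i$ and $(\voltage C_i)^h$ both lie in the normal subgroup~$G'$). Because $G'$ is cyclic, the map $x\mapsto x^{h^{-1}} = hxh^{-1}$ is an automorphism of~$G'$ preserving the property of generating~$G'$; applying it shows $\langle\voltage D_i\rangle = G'$ if and only if $\langle\gamma\cdot\voltage C_i\rangle = G'$, where $\gamma = \delta^{h^{-1}}\in G'$ is a fixed element. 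The starred hypothesis, applied to this particular~$\gamma$, then supplies an index~$i$ with $\langle\gamma\cdot\voltage C_i\rangle = G'$, and for that~$i$ the cycle~$D_i$ is a hamiltonian cycle of $\Cay(G/G';S)$ whose voltage generates~$G'$, as required. (When $\langle S_0\rangle = G$ the base is trivial, no splicing is needed, $D_i = C_i$, and the argument collapses to its final step with $\gamma = e$.)
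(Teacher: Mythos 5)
There is nothing to compare against inside the paper itself: \cref{MarusicMethod} is imported from Maru\v si\v c \cite{Marusic-HamCircCay} and Keating--Witte \cite[Lem.~3.1]{KeatingWitte} and is stated without proof, so your argument has to stand on its own as a proof of that cited result. Its endgame is correct: granting your ``template'' cycles $D_i$, the bookkeeping $\voltage D_i = \delta\cdot(\voltage C_i)^{h}$ with $\delta\in G'$ and $h$ independent of~$i$, the observation that conjugation by~$h$ is an automorphism of the normal subgroup~$G'$, and the application of the starred hypothesis to the single element $\gamma = \delta^{h^{-1}}$ do yield a hamiltonian cycle in $\Cay(\quot{G};S)$ whose voltage generates~$G'$, and then \cref{FGL} finishes. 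The genuine gap is that the template itself---the only nontrivial content of the lemma---is never constructed; what you prove carefully is the easy half, and what you assert is the hard half.

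Concretely, your plan is to cut each fiber cycle at one edge and join consecutive fibers by single lifted edges along a hamiltonian cycle of the base $\quot{G}/\quot{H}$. As you yourself flag, the question is whether this closes up into one spanning cycle, and that difficulty is real, not administrative: a lift of a closed walk in the base is in general \emph{not} closed, because its two ends differ by the voltage of the base cycle in~$\quot{H}$. For instance, with $\quot{G}=\integer_9$ (written additively), $\quot{H}=\langle 3\rangle$, and a generator $t$ mapping to $1$ in the base $\integer_3$, traversing the three base edges returns to the starting fiber shifted by $3\neq 0$; whether the fiber paths can absorb such a shift is an arithmetic condition on their endpoints, and ``exploit the freedom to choose the base cycle and the cut-edges \dots\ to force closure'' is a statement of intent standing where the entire proof should be. The standard way out is to abandon the base-cycle scheme altogether: adjoin the generators of $S\smallsetminus S_0$ one at a time, and merge cycles in consecutive cosets \emph{pairwise}---if a coset carries a hamiltonian cycle~$D$ and the next coset carries its translate~$tD$, delete an edge $f$ of~$D$ together with its translate $tf$, and add the two $t$-edges joining their endpoints. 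Merging along a chain of cosets is merging along a tree, so no closure condition ever arises; the common oriented edge~$e_0$ is used as~$f$ exactly once, between the distinguished coset (carrying~$C_i$) and its neighbour (carrying a translate of~$C_1$), and this is precisely what makes everything outside the distinguished coset independent of~$i$. That argument (or an equivalent induction over the abelian quotient, which is what Maru\v si\v c and Keating--Witte actually carry out) is what your proof still needs before its final paragraph can be invoked.
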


\begin{MarusicMethodCor} \label{MarusicMethod34}
Assume $G' = \integer_p \times \integer_q$, where $p$ and~$q$ are distinct primes. Then, in the situation of \cref{MarusicMethod}, the final condition ({\Large$*$}) 
can be replaced with either of the following:
\noprelistbreak
	\begin{enumerate}
	
	\item \label{MarusicMethod34-3}
	$r = 3$, and $\bigl\langle (\voltage C_i)^{-1} (\voltage C_j) \bigr\rangle = G'$ whenever $1 \le i < j \le 3$.
	
	\item \label{MarusicMethod34-4}
	$r = 4$, and 
	\noprelistbreak
		\begin{itemize}
		\item $\bigl\langle (\voltage C_1)^{-1} (\voltage C_2) \bigr\rangle$ contains~$\integer_p$, 
		and
		\item $\bigl\langle (\voltage C_1)^{-1} (\voltage C_3) \bigr\rangle = \bigl\langle (\voltage C_2)^{-1} (\voltage C_4) \bigr\rangle = \integer_q$.
		\end{itemize}
	
	\end{enumerate}
\end{MarusicMethodCor}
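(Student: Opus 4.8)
The plan is to verify that each of the two listed conditions implies the final hypothesis~($*$) of \cref{MarusicMethod}; the rest of that lemma is then inherited verbatim, and the conclusion follows. Since $G' = \integer_p \times \integer_q$ with $p \ne q$, it is cyclic of order~$pq$, and its only proper nontrivial subgroups are the two factors $\integer_p$ and~$\integer_q$. Writing $\pi_p \colon G' \to \integer_p$ and $\pi_q \colon G' \to \integer_q$ for the two coordinate projections (each a homomorphism, with $\integer_p, \integer_q$ written additively), an element $x \in G'$ generates~$G'$ if and only if $\pi_p(x) \ne 0$ and $\pi_q(x) \ne 0$. Abbreviating $v_i = \voltage C_i$, condition~($*$) then reads: for every $\gamma \in G'$ there is an index~$i$ with $\pi_p(\gamma) \ne -\pi_p(v_i)$ and $\pi_q(\gamma) \ne -\pi_q(v_i)$. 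Calling $\gamma$ \emph{bad} if it violates this for all~$i$ (so that for each~$i$ we have $\pi_p(\gamma) = -\pi_p(v_i)$ or $\pi_q(\gamma) = -\pi_q(v_i)$), the task in each part is to show no bad~$\gamma$ exists.

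For part~\pref{MarusicMethod34-3} I would first translate the hypothesis: $\langle v_i^{-1} v_j \rangle = G'$ says precisely that $\pi_p(v_i) \ne \pi_p(v_j)$ and $\pi_q(v_i) \ne \pi_q(v_j)$, so the three values $\pi_p(v_1), \pi_p(v_2), \pi_p(v_3)$ are pairwise distinct, and likewise the three $\pi_q(v_i)$. A counting argument then finishes it: for a fixed~$\gamma$, the equation $\pi_p(\gamma) = -\pi_p(v_i)$ can hold for at most one~$i$, and $\pi_q(\gamma) = -\pi_q(v_i)$ for at most one~$i$, so together they account for at most two of the three indices. Hence some~$i$ makes $\gamma v_i$ generate~$G'$, and no~$\gamma$ is bad.

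For part~\pref{MarusicMethod34-4} the same dictionary gives $\pi_p(v_1) = \pi_p(v_3)$, $\pi_p(v_2) = \pi_p(v_4)$, and $\pi_p(v_1) \ne \pi_p(v_2)$ (from the first bullet), together with $\pi_q(v_3) \ne \pi_q(v_1)$ and $\pi_q(v_4) \ne \pi_q(v_2)$ (from the second); thus $\pi_p$ takes only the two distinct values $\pi_p(v_1)=\pi_p(v_3)$ and $\pi_p(v_2)=\pi_p(v_4)$ on the four voltages. Given a putative bad~$\gamma$, I would split on where $-\pi_p(\gamma)$ lies. If it equals neither value, then $\pi_p(\gamma) \ne -\pi_p(v_i)$ for all~$i$, forcing $\pi_q(\gamma) = -\pi_q(v_i)$ for every~$i$ and in particular $\pi_q(v_1) = \pi_q(v_3)$, a contradiction. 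If $-\pi_p(\gamma) = \pi_p(v_1)$, then $i \in \{2,4\}$ have $\pi_p(\gamma) \ne -\pi_p(v_i)$, forcing $\pi_q(v_2) = -\pi_q(\gamma) = \pi_q(v_4)$, again impossible; the case $-\pi_p(\gamma) = \pi_p(v_2)$ is symmetric, forcing $\pi_q(v_1) = \pi_q(v_3)$. So no bad~$\gamma$ exists.

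The mathematical content is entirely the bookkeeping above; the only genuinely delicate point I anticipate is fixing the correct dictionary between the subgroup conditions and the (non)vanishing of $\pi_p,\pi_q$ — in particular that ``$\langle\cdot\rangle$ contains~$\integer_p$'' means $\pi_p \ne 0$, whereas ``$\langle\cdot\rangle = \integer_q$'' means $\pi_p = 0$ together with $\pi_q \ne 0$. Once that is in hand, both parts reduce to the single observation that two ``killing'' constraints cannot cover three (resp.\ four) indices without forcing a coincidence the hypotheses forbid, which explains why $r=3$ alone suffices in~\pref{MarusicMethod34-3} while the paired structure is exactly what is needed when $r=4$ in~\pref{MarusicMethod34-4}.
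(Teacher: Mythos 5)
Your proof is correct and takes essentially the same approach as the paper: translating the generation conditions into (non)vanishing of the projections to $\integer_p$ and $\integer_q$, then observing that the ``killing'' constraints can cover at most two of the three indices in part~(1), and exploiting the paired projections $\pi_p(v_1)=\pi_p(v_3)$, $\pi_p(v_2)=\pi_p(v_4)$ in part~(2). The paper phrases part~(2) directly (pick $i\in\{1,2\}$ with nontrivial $\integer_p$-projection, then pass to $i+2$ if needed) rather than by contradiction, but the underlying argument is identical.
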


\begin{proof}
Let $\gamma \in G'$.

\pref{MarusicMethod34-3} Consider the three elements $\gamma \cdot \voltage C_1$, $\gamma \cdot \voltage C_2$, and~$\gamma \cdot \voltage C_3$ of $\integer_p \times \integer_q$. By assumption, no two have the same projection to~$\integer_p$, so only one of them can have trivial projection. Similarly for the projection to~$\integer_q$. Therefore, there is some~$i$, such that $\gamma \cdot \voltage C_i$ projects nontrivially to both $\integer_p$ and~$\integer_q$. Therefore $\langle \gamma \cdot \voltage C_i \rangle = G'$.

\pref{MarusicMethod34-4} There is some $i \in \{1,2\}$, such that $\gamma \cdot \voltage C_i$ projects nontrivially to~$\integer_p$. We may assume the projection of $\gamma \cdot \voltage C_i$ to~$\integer_q$ is trivial (otherwise, we have $\langle \gamma \cdot \voltage C_i \rangle = G'$, as desired). Then $\gamma \cdot \voltage C_{i+2}$ has the same (nontrivial) projection to~$\integer_p$, but has a different (hence, nontrivial) projection to~$\integer_q$. So $\langle \gamma \cdot \voltage C_{i+2} \rangle = G'$.
\end{proof}

\subsection{Some known results}

We recall a few results that provide hamiltonian cycles in $\Cay(G;S)$ under certain assumptions.

\begin{thm}[{Witte \cite{Witte-pgrp}}] \label{pgrp}
If $|G| = p^\mu$, where $p$~is prime and $\mu > 0$, then every connected Cayley digraph on~$G$ has a directed hamiltonian cycle.
\end{thm}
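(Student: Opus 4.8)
The plan is to induct on $|G|$. Since $G$ is a nontrivial $p$-group, it is nilpotent, so its centre is nontrivial and contains a subgroup $N$ of order~$p$; being central, $N$ is normal in~$G$. Before starting, I would discard any redundant generators, so that $S$ is a minimal generating set: if a proper subset $S_0 \subsetneq S$ already generates~$G$, then any directed hamiltonian cycle in $\Cay(G;S_0)$ is automatically one in $\Cay(G;S)$, and minimality helps pin down the structure in the hard case below. The base case $|G| = p$ is immediate, since then $G = \integer_p$ and $\Cay(\integer_p;S)$ is a single directed $p$-cycle.

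For the inductive step, I would apply the induction hypothesis to the smaller $p$-group $G/N$: the digraph $\Cay(G/N;S)$ has a directed hamiltonian cycle $C = (s_1,\ldots,s_m)$, where $m = |G|/p$, and its voltage $\voltage C = s_1 s_2 \cdots s_m$ lies in the cyclic group~$N$. If $\voltage C$ generates~$N$, then the directed form of \cref{FGL} (its proof never uses inverse edges, so it applies verbatim to digraphs) shows that $(s_1,\ldots,s_m)^{p}$ is a directed hamiltonian cycle in $\Cay(G;S)$, and we are done.

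The remaining case, in which the voltage is trivial, is the crux. Here the lift of~$C$ to $\Cay(G;S)$ is disconnected: it breaks into $p$ disjoint directed $m$-cycles that are freely permuted by~$N$. So the real task is to replace~$C$ by a \emph{different} directed hamiltonian cycle of $\Cay(G/N;S)$ whose voltage generates~$N$ — equivalently, is nontrivial, since $|N| = p$. Because $N$ is central, the voltage of \emph{every} hamiltonian cycle in the quotient automatically lies in~$N$, so any admissible modification changes the voltage only within~$N$, and a reordering of the factors changes it by a product of commutators; the entire content is to show that some modification makes the voltage nonzero rather than forcing it to stay trivial. I would attempt this by a local surgery on~$C$: cutting it at two suitably chosen arcs and reconnecting so that it remains a single hamiltonian cycle in the quotient while the product $s_1\cdots s_m$ is altered by a nontrivial element of~$N$.

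Establishing that such a voltage-changing reroute exists in \emph{every} configuration is the main obstacle, and is where the nilpotency of~$G$ and the minimality of~$S$ must be used. Unlike the undirected setting, arcs cannot be traversed backwards, so the reconnections available in a digraph are genuinely restricted, and verifying that an admissible one always survives these restrictions is the delicate combinatorial heart of the argument (and is the substance of Witte's original proof).
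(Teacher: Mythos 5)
The paper does not prove \cref{pgrp}: it is quoted from \cite{Witte-pgrp} and used as a black box, so the only question is whether your sketch itself constitutes a proof. It does not. The step you call the crux --- that whenever the inductively obtained cycle has trivial voltage, some rerouting produces a different hamiltonian cycle in $\Cay(G/N;S)$ whose voltage generates~$N$ --- is essentially the entire content of Witte's theorem, and you explicitly defer it (``is the substance of Witte's original proof''). Everything you do establish (existence of a central~$N$ of order~$p$, the directed Factor Group Lemma, the description of the disconnected lift in the trivial-voltage case) is the routine part; a proof attempt that punts on the one nontrivial step has proved nothing.

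Worse, the skeleton is not merely incomplete but false as stated, because the rerouting claim fails for an arbitrary central~$N$ of order~$p$. Take $G = \integer_p \times \integer_p$ with the standard minimal generating set $S = \{a,b\}$ and $N = \langle b \rangle$ (central, since $G$ is abelian). In $\Cay(G/N;S)$ the generator~$b$ becomes a loop, so every hamiltonian cycle of the quotient is $(a,a,\ldots,a)$, whose voltage $a^p = e$ is trivial; hence \emph{no} hamiltonian cycle of the quotient has voltage generating~$N$, and no surgery of any kind can create one. (With the choice $N = \langle a b^{-1} \rangle$ instead, the two generators have the same image in the quotient, and swapping a single arc between the two parallel labels changes the voltage by $ab^{-1}$, so the method succeeds instantly.) This shows that the choice of~$N$, the possibility $S \cap N \neq \emptyset$, and --- in the nonabelian case --- the need to \emph{construct} the quotient cycle so that it contains configurations admitting a voltage-changing reroute, rather than accepting an arbitrary cycle handed over by induction, are exactly the issues around which Witte's actual proof is organized; in a digraph, where arcs cannot be traversed backwards, an inductively supplied cycle need not contain any usable configuration at all. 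Identifying the obstacle, as you do, is not the same as overcoming it.
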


\begin{thm}[{Ghaderpour-Morris \cite{GhaderpourMorris-Nilpotent}}] \label{GhaderpourMorrisNilpotent}
If $G$ is a nontrivial, nilpotent, finite group, and the commutator subgroup of~$G$ is cyclic, then every connected Cayley graph on~$G$ has a hamiltonian cycle.
\end{thm}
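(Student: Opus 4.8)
The plan is to reduce the problem, via the Factor Group Lemma, to controlling the voltage of a single hamiltonian cycle in an abelian quotient, and then to separate the primes using coprimality. Since $G$ is nilpotent, it is the direct product $G = P_1 \times \cdots \times P_k$ of its Sylow subgroups, so $G' = P_1' \times \cdots \times P_k'$; as $G'$ is cyclic, each $P_i'$ is cyclic of prime-power order~$p_i^{\mu_i}$. In particular, \cref{KeatingWitteThm} already handles the case $k = 1$ of a single Sylow subgroup (and the abelian case $G' = 1$ is classical), so the real content is passing from the individual factors to their product when a generating set~$S$ of~$G$ does not respect the decomposition $G = P_1 \times \cdots \times P_k$. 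I would organize the argument as an induction on~$k$, proving the stronger statement that $\Cay(\quot{G}; S)$ has a hamiltonian cycle whose voltage generates~$G'$; by \cref{FGL} this immediately yields a hamiltonian cycle in $\Cay(G; S)$.

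By \cref{FreeLunch} I may assume that~$G'$ is square-free, so $G' \iso \integer_{p_1} \times \cdots \times \integer_{p_k}$ with the~$p_i$ distinct. Because $\quot{G} = G/G'$ is abelian, the classical theorem that connected Cayley graphs on abelian groups are hamiltonian furnishes a hamiltonian cycle in $\Cay(\quot{G}; S)$; the whole difficulty is its voltage. An element of the cyclic group~$G'$ generates it precisely when its projection to each factor~$\integer_{p_i}$ is nonzero, so the goal is a cycle whose voltage is simultaneously nonzero modulo every~$p_i$. The engine for adjusting the voltage is the comparison of different hamiltonian cycles in the abelian graph $\Cay(\quot{G}; S)$: two such cycles have voltages differing by an element of~$G'$, and a local rerouting that interchanges a consecutive pair $\quot{s}, \quot{t}$ with a pair $\quot{t}, \quot{s}$ occurring elsewhere (a modification that stays a single hamiltonian cycle precisely because $\quot{G}$ is abelian) changes the voltage by the commutator $[s,t] \in G'$. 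Since $G$ is nilpotent, as $s,t$ range over~$S$ these commutators generate~$G'$.

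To finish I would assemble a small family $C_1, \ldots, C_r$ of such cycles sharing an oriented edge and feed it into \cref{MarusicMethod}. When $k = 2$ this is exactly the setting of \cref{MarusicMethod34}: it suffices to arrange three cycles whose pairwise voltage-differences generate $G' = \integer_p \times \integer_q$ (or four cycles satisfying the split condition), and because those differences are commutators $[s,t]$, which generate~$G'$, the Chinese Remainder Theorem lets me select differences that are units modulo both primes. For general~$k$ I would iterate this two-prime mechanism: quotienting out a single~$P_i'$ leaves the nilpotent group $G/P_i'$, whose commutator subgroup involves one fewer prime and whose commutator-quotient is again~$\quot{G}$, so the induction hypothesis supplies a cycle with the correct residues at the remaining primes, and it remains only to fix the residue at~$p_i$ via the commutator-reroutings, the coprimality of the~$p_i$ letting me control the primes one at a time.

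The main obstacle is exactly this simultaneous control of the voltage: one must guarantee that enough independent reroutings are available within a single hamiltonian cycle to set all~$k$ residues nonzero at once, and, in the inductive step, to move one residue without spoiling the others. Coprimality is what makes the primes separable, but the freedom to reroute is scarce when the generating set is small and $\Cay(\quot{G}; S)$ is correspondingly rigid. Here nilpotence usefully delimits the hard cases: for nilpotent~$G$ a cyclic quotient~$\quot{G}$ would force each $P_i/P_i'$, hence each~$P_i$, to be cyclic, making~$G$ abelian with nothing to prove; so~$\quot{G}$ necessarily has rank at least~$2$, and the genuinely tight situations are those of rank exactly~$2$. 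These low-rank cases, where there are barely enough generators to maneuver, are where I expect the real combinatorial work to lie, and they would likely require hands-on constructions of the needed cycles rather than the generic rerouting argument.
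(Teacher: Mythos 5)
First, a point of comparison: the paper does not prove this statement at all --- it is imported as a known result from the preprint \cite{GhaderpourMorris-Nilpotent}, so your proposal has to be judged on its own merits rather than against an internal argument. Judged that way, it has a fatal gap. The strengthened statement you propose to prove by induction on the number of Sylow factors --- that $\Cay(\quot{G};S)$ always has a hamiltonian cycle whose voltage generates $G'$ --- is \emph{false} already at your base case $k=1$. For the nonabelian group of order~$27$ and exponent~$3$ (a nilpotent group with $G' \iso \integer_3$) and $S$ a minimal two-element generating set, every hamiltonian cycle in $\Cay(\quot{G};S)$ has trivial voltage; this is precisely why that group is excluded from \cref{G'=p}, why the exclusion ``$G/(G')^3$ is not the nonabelian group of order~$27$ and exponent~$3$'' recurs throughout the present paper, and why \cref{KeatingWitteThm} had to handle that group via Witte's $p$-group theorem (\cref{pgrp}, proved by digraph methods) rather than via \cref{FGL}. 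Your appeal to \cref{KeatingWitteThm} does establish the \emph{theorem} for $k=1$, but not the strengthened inductive hypothesis, so the induction never gets started; any correct proof must route this exceptional group (and nilpotent groups containing it as a direct factor) around the Factor Group Lemma entirely, which your outline nowhere does.

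There is a second, independent gap in the engine of the argument. You assert that interchanging a consecutive pair of edge labels $\quot{s},\quot{t}$ with a pair $\quot{t},\quot{s}$ occurring elsewhere ``stays a single hamiltonian cycle precisely because $\quot{G}$ is abelian.'' That is not so: replacing the subpath $[g](s,t)$ by $[g](t,s)$ changes the intermediate vertex from $gs$ to $gt$, so the result is in general not a cycle through all vertices, and even after a compensating change where $gt$ was originally visited, the two reroutings can disconnect the cycle into two disjoint closed walks. Whether such an alteration can be carried out is a global question about the cycle, and making it work is exactly the content of \cref{StandardAlteration} and \cref{MarusicMethod} --- note how much care the present paper takes (Sections 3--5) to exhibit cycles in which the required subpaths sit in the right relative position, and this only for \emph{two} primes. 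Since you yourself defer the rank-two cases, where the generating set is too small for generic maneuvering, to unspecified ``hands-on constructions,'' what remains is a reasonable strategy outline (and one that does resemble the actual Ghaderpour--Morris approach of \cref{FGL} plus Maru\v si\v c's method plus reduction to square-free $G'$ via \cref{FreeLunch}), but not a proof: its base case is false as stated, and its key combinatorial steps are left conjectural.
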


\begin{thm}[{Ghaderpour-Morris \cite{GhaderpourMorris-27p}}] \label{27p}
If $|G| = 27p$, where $p$~is prime, then every connected Cayley graph on~$G$ has a hamiltonian cycle.
\end{thm}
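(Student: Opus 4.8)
\noindent
The plan is to reduce to a short list of group structures and then apply the machinery above. First I would assume $S$ is a minimal generating set, since a hamiltonian cycle in $\Cay(G;S_0)$ for a generating subset $S_0 \subseteq S$ is automatically one in $\Cay(G;S)$. If $p=3$ then $|G| = 3^4$ and \cref{pgrp} applies at once, so I assume $p \neq 3$; write $P$ for a Sylow $3$-subgroup (of order~$27$) and $Q \iso \integer_p$ for a Sylow $p$-subgroup. A Sylow count shows that $G$ always has a normal Sylow subgroup: having both $n_3 > 1$ and $n_p > 1$ forces $p = 13$ and $n_{13} = 27$, which is impossible because $27$ Sylow $13$-subgroups already use up $324$ of the $351$ elements, leaving room for only one Sylow $3$-subgroup. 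If both Sylow subgroups are normal, then $G = P \times Q$ is nilpotent with commutator subgroup $P'$ (cyclic of order $1$ or $3$), and \cref{GhaderpourMorrisNilpotent} applies. So I may assume $G$ is non-nilpotent with exactly one normal Sylow subgroup.

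\noindent
Next I would compute $G'$ and try to land inside \cref{KeatingWitteThm}. If $Q \iso \integer_p$ is the normal Sylow subgroup, then $P$ acts through $P \to \Aut(\integer_p)$ with cyclic image, so $P'$ lies in the kernel and centralizes~$Q$; when the action is nontrivial this gives $G' = P' \times Q$, equal to $\integer_p$ (if $P$ is abelian) or to the cyclic group $\integer_{3p} = \integer_3 \times \integer_p$ (if $P$ is nonabelian). If instead $P$ is normal, then $G = Q \ltimes P$ with $\integer_p$ acting on~$P$, and $G'$ is the subgroup of $P$ swept out by the commutators $[q,x]$. For $P = \integer_{27}$ this is a subgroup of a cyclic group, so $G' \iso \integer_{27}$. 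Whenever $G'$ comes out cyclic of prime-power order, \cref{KeatingWitteThm} finishes the proof.

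\noindent
The first case demanding real work is $G' \iso \integer_{3p}$ --- cyclic but not a prime power --- which occurs when $P$ is nonabelian and acts nontrivially on $Q \iso \integer_p$ (so $p \equiv 1 \pmod 3$). Here I would apply \cref{FGL} with $N = G'$: the quotient $\bar G = G/G'$ is abelian of order~$9$, so $\Cay(\bar G; \bar S)$ has hamiltonian cycles, and since $G' \iso \integer_3 \times \integer_p$ is already square-free, \cref{FreeLunch} imposes no extra work. The goal is a hamiltonian cycle of $\Cay(\bar G; \bar S)$ whose voltage generates $G'$, i.e.\ projects nontrivially to both $\integer_3$ and~$\integer_p$. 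I would exhibit three or four hamiltonian cycles of $\Cay(\bar G; \bar S)$ with a common oriented edge and apply \cref{MarusicMethod34}, which is tailored to $G' = \integer_3 \times \integer_p$; the only real bookkeeping is choosing the cycles so that the prescribed differences of their voltages generate the $\integer_3$- and $\integer_p$-factors. Since $\bar G$ is abelian of order~$9$ with only a couple of possible generator patterns, this is a finite if fiddly verification.

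\noindent
The genuine obstacle is the remaining family, in which $G'$ is a \emph{non-cyclic} elementary abelian $3$-group. This occurs for the generalized-dihedral groups of order~$54$ (for instance $\integer_2 \ltimes (\integer_9 \times \integer_3)$ or $\integer_2 \ltimes \integer_3^3$ with the nonidentity element acting by inversion, where $G' = \integer_9 \times \integer_3$ or $\integer_3^3$) and, most sharply, for $G = \integer_{13} \ltimes \integer_3^3$ with an irreducible order-$13$ action, where $G' = \integer_3^3$ is the unique minimal normal subgroup. In that last group there is no nontrivial cyclic normal subgroup whatsoever, so neither \cref{FGL} nor \cref{MarusicMethod} can be invoked (both require a cyclic~$N$), while \cref{KeatingWitteThm} and \cref{GhaderpourMorrisNilpotent} are useless because $G'$ is non-cyclic and $G$ is non-nilpotent. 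I expect this to be the hard part. Here I would drop the quotient-by-$G'$ strategy and build a hamiltonian cycle directly from the semidirect structure $\integer_p \ltimes \integer_3^k$: regard $\Cay(G;S)$ as a cover of the $p$-cycle $\Cay(\integer_p; \{\bar a\})$ with elementary abelian fibers, and splice the fiber generator~$v$ --- together with its $\integer_p$-conjugates, which span the fiber by irreducibility --- into the base cycle so that the lift is spanning and closes up. Since only a bounded list of groups reaches this stage (the order-$54$ dihedral-type groups and the single order-$351$ group), an explicit construction, or failing that a direct computer check, completes the argument.
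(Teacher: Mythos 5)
You should first be aware that the paper you were given does not prove \cref{27p} at all: it is quoted as an external result from Ghaderpour--Morris \cite{GhaderpourMorris-27p}, a 16-page paper in its own right, so your proposal can only be measured against that proof. Your reductions are sound as far as they go, and they do match the known architecture: the Sylow count (both $n_3>1$ and $n_p>1$ force $p=13$, $n_{13}=27$, killed by counting elements of order $13$) is correct; $p=3$ is \cref{pgrp}; the nilpotent case is \cref{GhaderpourMorrisNilpotent}; $G'$ cyclic of prime-power order is \cref{KeatingWitteThm}; and attacking $G' \iso \integer_{3p}$ via \cref{FGL} and \cref{MarusicMethod34} is exactly what Ghaderpour--Morris do (their Props.~3.4 and~3.6, invoked in this paper as \cref{G=27palpha}). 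But do not underestimate that step: it is not a ``finite if fiddly verification.'' The present paper's \cref{9pq-hard} and \cref{TriangleHC} exist precisely because, when $\quot{G} \iso \integer_3 \times \integer_3$ and $\#S = 2$, the Maru\v si\v c-style voltage differences can all collapse, forcing a bespoke application of \cref{FGL} with $N = \integer_{q^\nu}$; the analogous subcase consumes a comparable share of \cite{GhaderpourMorris-27p}.

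There are two genuine gaps. First, your enumeration of the residual groups is wrong: with the Sylow $3$-subgroup $P$ normal, you analyzed only $P = \integer_{27}$ before jumping to the generalized dihedral examples. Take $P = 3^{1+2}_+$ (extraspecial of exponent $3$) and $G = \integer_2 \ltimes P$ with the involution acting as $-I$ on $P/\Phi(P)$; such an automorphism exists because $\Aut(P)$ maps onto $\GL_2(3) \ni -I$ with kernel $\mathrm{Inn}(P)$ of odd order. Since $[\sigma,P]$ then covers $P/\Phi(P)$ and any subgroup covering $P/\Phi(P)$ contains $P' = Z(P)$, one gets $G' = P$, \emph{nonabelian} of order~$27$ --- neither cyclic, nor elementary abelian, nor on your list. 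Likewise the action $\mathrm{diag}(1,-1)$ on $P/\Phi(P)$ gives $G' \iso \integer_3 \times \integer_3$ with $G$ not generalized dihedral, and partial inversions of $\integer_3^3$ and $\integer_9 \times \integer_3$ produce further order-$54$ groups, so your ``bounded list'' is incomplete. Second, and decisively, the cases you yourself identify as the heart of the matter are not proved. For $G = \integer_{13} \ltimes \integer_3^3$ you rightly observe that \cref{FGL} and \cref{MarusicMethod} are unusable (there is no nontrivial cyclic normal subgroup), but the replacement you offer is only a heuristic: any closed walk projecting onto the base $13$-cycle with winding number one accrues a single voltage in the fiber, which generates a cyclic --- hence proper --- subgroup of $\integer_3^3$, so the entire lift-a-quotient-cycle paradigm is structurally blocked, and ``splice the fiber generator and its conjugates so that the lift is spanning and closes up'' is precisely the unproved step. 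One must construct hamiltonian cycles in $\Cay(G;S)$ directly, for every minimal generating set, and this explicit work --- together with the order-$54$ groups above --- is where the bulk of \cite{GhaderpourMorris-27p} is spent. ``Failing that a direct computer check'' is an unexecuted computation, not an argument. As it stands, your proposal is a correct reduction strategy for the easy cases, with the residual cases misenumerated and left open.
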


\begin{cor}[{of proof}] \label{G=27palpha}
If $G$ is a finite group, such that $|G/G'| = 9$ and $G'$ is cyclic of order $p^\mu \cdot 3^\nu$, where $p \ge 5$~is prime, then every connected Cayley graph on~$G$ has a hamiltonian cycle.
\end{cor}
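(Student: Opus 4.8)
The plan is to use \cref{FreeLunch} to collapse all but a square-free sliver of~$G'$, landing in a group of order~$27p$, and then to feed in the construction hidden inside the proof of \cref{27p}. First, if $\mu = 0$ or $\nu = 0$ then $G'$ is cyclic of prime-power order, so \cref{KeatingWitteThm} already gives the conclusion; hence I may assume $\mu, \nu \ge 1$. Let $\Phi$ be the Frattini subgroup of the cyclic group~$G'$, and put $\underline{N} = G'/\Phi$ and $\underline{G} = G/\Phi$ as in \cref{FreeLunch} (with $N = G'$). Since $G'$ is cyclic of order $p^\mu 3^\nu$, its Frattini subgroup is the unique subgroup of order $p^{\mu-1}3^{\nu-1}$, so $\underline{N}$ is cyclic of square-free order~$3p$; and $\Phi$, being characteristic in~$G'$, is normal in~$G$.

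Next I would pin down the structure of~$\underline{G}$. From $|G| = |\quot{G}|\,|G'| = 9 \cdot p^\mu 3^\nu$ and $|\Phi| = p^{\mu-1}3^{\nu-1}$ we get $|\underline{G}| = 27p$. As the commutator subgroup of a quotient is the image of the commutator subgroup, $\underline{G}' = G'/\Phi = \underline{N}$, so $\underline{G}$ is a group of order~$27p$ with $\underline{G}'$ cyclic of order~$3p$ and $\underline{G}/\underline{G}' \cong \quot{G}$ of order~$9$. Because $p \ge 5$, we have $\underline{G}' \cong \integer_3 \times \integer_p$ with two distinct prime factors---precisely the setting in which \cref{MarusicMethod34} is available.

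The crux is that \cref{FreeLunch} asks for more than \cref{27p} advertises: I need a hamiltonian cycle $(s_1,\ldots,s_m)$ in $\Cay(\underline{G}/\underline{N};S) = \Cay(\quot{G};S)$ whose voltage $\underline{s_1}\cdots\underline{s_m}$ \emph{generates}~$\underline{N} = \underline{G}'$, not merely a hamiltonian cycle in $\Cay(\underline{G};S)$. This is why the result is a corollary \emph{of proof}: the proof of \cref{27p}, in the case $|H/H'| = 9$ with $H' \cong \integer_{3p}$, produces its hamiltonian cycle through \cref{MarusicMethod}, whose very conclusion is the existence of a hamiltonian cycle in $\Cay(H/H';S)$ with generating voltage. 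I would extract exactly that intermediate statement and apply it to~$\underline{G}$.

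Granting it, the proof closes at once: the cycle so obtained in $\Cay(\quot{G};S)$ has voltage generating~$\underline{N}$, so \cref{FreeLunch} certifies that $s_1 s_2 \cdots s_m$ generates~$G'$ and that $(s_1,\ldots,s_m)^{|G'|}$ is a hamiltonian cycle in $\Cay(G;S)$. I expect the only real difficulty to be this bookkeeping step---verifying that the relevant branch of the case analysis in the proof of \cref{27p} genuinely exhibits a generating voltage on the order-$9$ quotient, rather than just asserting a hamiltonian cycle downstairs. The order, normality, and Frattini computations above are routine by comparison.
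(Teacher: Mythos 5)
Your proposal matches the paper's proof essentially step for step: the paper sets $\underline{G} = G/(G')^{3p}$ (exactly your Frattini quotient, since $\Phi(G') = (G')^{3p}$ once the degenerate cases $\mu=0$ or $\nu=0$ are excluded), cites the proof of \cite[Props.~3.4 and~3.6]{GhaderpourMorris-27p} for a hamiltonian cycle in $\Cay\bigl(\underline{G}/\underline{G}';S\bigr)$ whose voltage generates $\underline{G}' \cong \integer_{3p}$, and then finishes with \cref{FreeLunch}. The ``bookkeeping step'' you flag as the only real difficulty is precisely what the paper resolves by pointing at those two propositions, which do produce a generating voltage on the order-$9$ quotient rather than merely a hamiltonian cycle downstairs.
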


\begin{proof}
Let $\underline{G} = G/(G')^{3p}$. Then $|\underline{G}| = 27p$ and $|\underline{G}'| = 3p$, so the proof of \cite[Props.~3.4 and~3.6]{GhaderpourMorris-27p} provides a hamiltonian cycle in $\Cay \bigl( \underline{G} /  \underline{G}'; S \bigr)$ whose voltage generates~$\underline{G}'$. Then \cref{FreeLunch} provides a hamiltonian cycle in $\Cay(G;S)$.
\end{proof}

\begin{thm}[{Alspach \cite[Thm.~3.7]{Alspach-lifting}}] \label{GenNormal}
Suppose
\noprelistbreak
	\begin{itemize}
	\item $s \in S$,
	\item $\langle s \rangle \normal G$,
	\item $|G / \langle s \rangle|$ is odd, 
	and
	\item there is a hamiltonian cycle in $\Cay\bigl( G/\langle s \rangle ; S \bigr)$.
	\end{itemize}
Then there is a hamiltonian cycle in $\Cay(G;S)$.
\end{thm}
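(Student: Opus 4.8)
The plan is to lift the given hamiltonian cycle through the cyclic normal subgroup $N = \langle s\rangle$. Write $n = |N|$ and $m = |\quot G|$, so $m$ is odd. The edges arising from~$s$ partition the vertex set~$G$ into the $m$ cosets of~$N$, and on each coset $gN = \{g, gs, \ldots, gs^{n-1}\}$ they form a single $n$-cycle; since $N \normal G$, these ``vertical'' cycles are permuted among themselves by left translation. Let $C = (t_1, \ldots, t_m)$ be the given hamiltonian cycle in $\Cay(\quot G; S)$, with voltage $v = \voltage C = t_1 \cdots t_m \in N$. If $v$ generates~$N$, then \cref{FGL} at once yields a hamiltonian cycle in $\Cay(G;S)$; so all of the work is in the case that $\langle v\rangle$ is a proper subgroup of~$N$.

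Suppose then that $d = |N/\langle v\rangle| > 1$. Traversing the closed walk $(t_1, \ldots, t_m)$ repeatedly, starting successively from $e, s, \ldots, s^{d-1}$, produces $d$ pairwise-disjoint cycles $Z_0, \ldots, Z_{d-1}$ in $\Cay(G;S)$, each of length $mn/d$, with $Z_{i+1}$ the image of~$Z_i$ under left translation by~$s$. Tracking how the voltage is conjugated as the walk passes from one coset to the next (as in \cref{gVoltage}) shows that, within each coset, $Z_i$ meets exactly one residue class of heights modulo~$d$; hence the $Z_i$ together cover all $mn$ vertices while using none of the vertical $s$-edges. The plan is to splice these $d$ cycles into one hamiltonian cycle by rerouting along some of the unused $s$-edges.

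The splicing is the familiar move of deleting an edge from each of two cycles and reconnecting their loose ends by a pair of $s$-edges. Here is the main obstacle: because $N$ need not be central, conjugation by a generator~$t_j$ sends $s$ to a power~$s^{\epsilon_j}$, and these ``twists'' distort the move --- following $Z_{i+1}$ from the $s$-neighbour of a vertex~$u$ of~$Z_i$ lands at $u t_j s^{\epsilon_j}$ rather than at the $s$-neighbour $u t_j s$ of the $Z_i$-successor of~$u$. Consequently a single reconnection links $Z_i$ not to its intended partner but to $Z_{i+\epsilon_j^{-1}}$, so the whole splicing pattern is governed by the twist homomorphism $c \colon \quot G \to (\integer_n)^*$, $\quot{t_j} \mapsto \epsilon_j$. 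This is exactly where the hypothesis that $m = |\quot G|$ is odd is used: the image of~$c$ then has odd order, so every twist has odd multiplicative order and, in particular, no generator inverts~$s$. I would use this to orient and order the reconnections consistently --- reducing to square-free~$n$ by \cref{FreeLunch} and arguing one prime divisor of~$n$ at a time --- so that the $d$ sub-cycles fuse into a single cycle rather than breaking into smaller ones. Carrying out this twist-bookkeeping is the heart of the argument; once it is done, the spliced cycle is the desired hamiltonian cycle in $\Cay(G;S)$.
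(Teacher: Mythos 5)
You should first be aware that the paper does not prove this statement at all: it is quoted as a known theorem of Alspach \cite{Alspach-lifting}, so there is no internal proof to compare against, and what you are attempting is a from-scratch reproof of that external result. Judged on its own terms, your proposal has a genuine gap: everything you actually carry out is the routine part, and the step that constitutes the theorem is announced rather than proved. The case where $\voltage C$ generates $N = \langle s \rangle$ is indeed immediate from \cref{FGL}, and your description of the remaining case is correct as far as it goes: the lift of the quotient cycle decomposes into $d = |N : \langle \voltage C \rangle|$ pairwise disjoint cycles $Z_0, \ldots, Z_{d-1}$, permuted by translation by~$s$, covering all of~$G$ and using no $s$-edges. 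You also correctly identify the obstruction to merging them: the $s$-translate of a $t_j$-edge of~$Z_i$ ends at $u t_j s^{\epsilon_j}$, so when $\epsilon_j \neq 1$ the naive four-edge exchange is unavailable (the would-be reconnecting edge from $u t_j$ to $u t_j s^{\epsilon_j}$ is not an edge of the Cayley graph). But at exactly this point you write ``I would use this to orient and order the reconnections consistently'' and ``carrying out this twist-bookkeeping is the heart of the argument.'' That bookkeeping \emph{is} the theorem. You give no rule for selecting which edges to exchange, no verification that the exchanges are pairwise compatible and produce one cycle rather than several shorter ones, and no actual use of the hypothesis that $|\quot{G}|$ is odd beyond the remark that no twist inverts~$s$ --- an observation which by itself proves nothing.

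The auxiliary reductions you invoke also do not function as described. \Cref{FreeLunch} is a statement about voltages: it upgrades ``the voltage generates $N/\Phi(N)$'' to ``the voltage generates~$N$,'' i.e., it feeds \cref{FGL}; it says nothing in the case you need it for, where by hypothesis the voltage does \emph{not} generate~$N$ and you are splicing instead. Likewise, ``one prime divisor of~$n$ at a time'' does not set up an induction: for a coprime factorization $n = n_1 n_2$, the intermediate subgroup $\langle s^{n_2} \rangle$ is cyclic and normal, but its generator does not lie in~$S$, so neither the theorem being proved nor \cref{FGL} can be applied to lift a hamiltonian cycle of $\Cay \bigl( G/\langle s^{n_2} \rangle ; S \bigr)$ back to $\Cay(G;S)$. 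So the sketch cannot be completed along the lines indicated; what is missing is precisely Alspach's argument.
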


This has the following immediate consequence, since every subgroup of a cyclic, normal subgroup is normal:

\begin{cor} \label{GenInG'}
Suppose 
\noprelistbreak
	\begin{itemize}
	\item $G'$ is cyclic,
	\item $s \in S \cap G'$,
	\item $|G/\langle s \rangle|$ is odd,
	and
	\item there is a hamiltonian cycle in $\Cay(G/\langle s \rangle; S)$.
	\end{itemize}
Then there is a hamiltonian cycle in $\Cay(G;S)$.
\end{cor}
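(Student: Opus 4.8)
The plan is to deduce this immediately from \cref{GenNormal}, which already packages almost all of the work; the only hypothesis of that theorem that is not handed to us verbatim is the normality $\langle s \rangle \normal G$, so that is the one point I would verify. First I would note that $G' = [G,G]$ is normal in~$G$ (being the commutator subgroup, it is in fact characteristic), and that the assumption $s \in S \cap G'$ gives $s \in G'$, hence $\langle s \rangle \le G'$. Since $G'$ is cyclic, it has exactly one subgroup of each order dividing $|G'|$; in particular $\langle s \rangle$ is the unique subgroup of~$G'$ of its order, so it is preserved by every automorphism of~$G'$, i.e., $\langle s \rangle$ is characteristic in~$G'$.

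Next I would invoke the standard fact that a characteristic subgroup of a normal subgroup is normal in the whole group: for each $g \in G$, conjugation by~$g$ restricts to an automorphism of the normal subgroup~$G'$, and this automorphism carries the characteristic subgroup $\langle s \rangle$ onto itself. Therefore $\langle s \rangle^g = \langle s \rangle$ for all $g \in G$, which is exactly $\langle s \rangle \normal G$. This is precisely the observation stated in the sentence preceding the corollary.

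Finally, I would check the remaining hypotheses of \cref{GenNormal}: we have $s \in S$ (since $s \in S \cap G' \subseteq S$), the normality $\langle s \rangle \normal G$ just established, the assumption that $|G/\langle s \rangle|$ is odd, and the assumed hamiltonian cycle in $\Cay\bigl(G/\langle s \rangle; S\bigr)$. With all four conditions verified, \cref{GenNormal} produces a hamiltonian cycle in $\Cay(G;S)$, completing the argument. I do not anticipate any real obstacle here: the statement is billed as an immediate consequence, and the entire substance is the one-line verification that $\langle s \rangle$ is normal, which follows from the cyclicity (hence characteristicity of subgroups) of the normal subgroup~$G'$.
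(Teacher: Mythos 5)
Your proposal is correct and follows exactly the paper's intended route: the paper derives the corollary from \cref{GenNormal} via the observation (stated in the sentence preceding the corollary) that every subgroup of a cyclic, normal subgroup is normal, which is precisely the fact you verified through the standard characteristic-subgroup argument. Your write-up merely spells out the one-line justification in more detail, so there is no substantive difference.
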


\subsection{Group theoretic preliminaries}

We recall a few elementary facts about finite groups.

\begin{lem}[{\cite[3.11]
{GhaderpourMorris-Nilpotent}}] 
\label{Cent->Divides}
Suppose 
\noprelistbreak
	\begin{itemize}
	\item $\langle a,b \rangle = G$, 
	\item $G'$ is cyclic of square-free order, 	and 
	\item $G' \subseteq Z(G)$. 
	\end{itemize}
Then $|[a,b]|$ is a divisor of both $\langle \quot{a} \rangle$ and $|\quot{G}/\langle \quot{a} \rangle|$.
\end{lem}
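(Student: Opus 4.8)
The plan is to exploit the hypothesis $G' \subseteq Z(G)$, which says that $G$ is nilpotent of class at most~$2$, so that the commutator map is ``bilinear.'' Concretely, since every commutator lies in the central subgroup~$G'$, the standard identities $[xy,z] = [x,z]^y\,[y,z]$ and $[x,yz] = [x,z]\,[x,y]^z$ simplify to
$$[xy,z] = [x,z]\,[y,z] \qquad\text{and}\qquad [x,yz] = [x,y]\,[x,z],$$
because the conjugations act trivially on the central elements $[x,z]$ and~$[x,y]$. Writing $c = [a,b]$ and iterating these identities gives $[a^i,b^j] = c^{\,ij}$ for all integers $i,j$. In particular, since $G = \langle a,b\rangle$ and all commutators are central, every commutator is a power of~$c$, so $G' = \langle c \rangle$ and $|[a,b]| = |G'|$.

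For the first divisibility, I would let $\alpha = |\langle \quot{a}\rangle|$ be the order of~$\quot{a}$ in $\quot{G} = G/G'$, so that $\alpha$ is the least positive integer with $a^\alpha \in G'$. Since $a^\alpha \in G' \subseteq Z(G)$, it commutes with~$b$, so $e = [a^\alpha, b] = c^\alpha$. Hence $|[a,b]| = |c|$ divides $\alpha = |\langle \quot{a}\rangle|$. For the second divisibility, note that $\quot{G}$ is abelian and generated by $\quot{a}$ and~$\quot{b}$, so $\quot{G}/\langle\quot{a}\rangle$ is cyclic, generated by the image of~$\quot{b}$. Its order~$\beta$ is therefore the least positive integer with $\quot{b}^{\,\beta} \in \langle\quot{a}\rangle$, equivalently $b^\beta \in \langle a\rangle\,G'$; write $b^\beta = a^j c^k$. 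Then $a$ commutes with both $a^j$ and the central element~$c^k$, so $[a, b^\beta] = [a, a^j c^k] = e$, while bilinearity gives $[a, b^\beta] = c^\beta$. Thus $c^\beta = e$, and $|[a,b]| = |c|$ divides $\beta = |\quot{G}/\langle\quot{a}\rangle|$, as required.

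I expect no serious obstacle here: once the class-$2$ commutator identities are in place, both divisibilities reduce to the single observation that a power of~$a$ (resp.\ of~$b$) becomes trivial modulo~$G'$ exactly when the corresponding power of the central commutator~$c$ vanishes. The only points needing care are the bookkeeping of which minimal exponent computes $|\langle\quot{a}\rangle|$ versus $|\quot{G}/\langle\quot{a}\rangle|$, and the verification that the displayed bilinear identities hold with the paper's convention $[g,h] = g^{-1}h^{-1}gh$. It is worth remarking that the square-free hypothesis on~$|G'|$ plays no role in this particular argument; centrality of~$G'$ alone suffices.
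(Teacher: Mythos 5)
Your proof is correct, but there is no in-paper argument to compare it against: the paper quotes this lemma from \cite[3.11]{GhaderpourMorris-Nilpotent} without reproving it, so your write-up serves as a complete, self-contained substitute. The key steps all check out under the paper's convention $[g,h]=g^{-1}h^{-1}gh$: the class-$2$ identities $[xy,z]=[x,z]^y[y,z]$ and $[x,yz]=[x,z][x,y]^z$ do collapse to bilinearity when commutators are central, giving $[a^i,b^j]=c^{ij}$ for $c=[a,b]$; then centrality of $a^\alpha$ yields $c^\alpha=[a^\alpha,b]=e$, and $b^\beta\in\langle a\rangle G'$ yields $c^\beta=[a,b^\beta]=e$, which are exactly the two divisibilities (reading the statement's ``divisor of $\langle\quot{a}\rangle$'' as $|\langle\quot{a}\rangle|$, and using, as you do, that $\quot{G}/\langle\quot{a}\rangle$ is cyclic generated by the image of~$\quot{b}$). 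Your closing remark is also accurate, and can be pushed slightly further: not only is square-freeness never used, but the hypothesis that $G'$ is cyclic is superfluous as well, since your first step shows that in a two-generated group with $G'\subseteq Z(G)$ one automatically has $G'=\langle[a,b]\rangle$ (this is the central-case instance of \cref{G'=ab}); those hypotheses are simply carried along from the context in which the source paper states the result, and they are harmless here because the present paper only ever invokes the lemma under them.
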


\begin{lem}[{\cite[Lem.~3.12]{GhaderpourMorris-Nilpotent}}] \label{G'=ab}
If $G = \langle a,b \rangle$, and $G'$ is cyclic, then $G' = \langle [a,b] \rangle$.
\end{lem}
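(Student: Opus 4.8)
The plan is to show that $G'$ is the \emph{normal closure} of the single commutator $[a,b]$ in~$G$, and then to use the cyclicity of~$G'$ to see that this normal closure is already generated by $[a,b]$ alone. The whole argument is elementary; the only structural input is the first step.

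First I would verify that $G'$ equals the normal closure~$K$ of $[a,b]$ in~$G$ (the smallest normal subgroup containing $[a,b]$). On one hand, $[a,b] \in G'$ and $G' \normal G$, so $K \subseteq G'$. On the other hand, in the quotient $G/K$ the images of $a$ and~$b$ commute (since $[a,b] \in K$) and generate $G/K$ (since $a,b$ generate~$G$), so $G/K$ is abelian; hence $G' \subseteq K$. This gives $G' = K$, so $G'$ is generated by the conjugates $[a,b]^g$ with $g \in G$.

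Next I would exploit that $G'$ is cyclic. Each conjugate $[a,b]^g$ lies in~$G'$, and conjugation by~$g$ restricts to an automorphism of~$G'$ (because $G' \normal G$), so $[a,b]^g$ has the same order as $[a,b]$. In a cyclic group there is a unique subgroup of each order, and an element generates the unique subgroup whose order equals the order of that element; therefore $\langle [a,b]^g \rangle = \langle [a,b] \rangle$, and in particular $[a,b]^g \in \langle [a,b] \rangle$, for every $g \in G$.

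Finally, since $G' = K$ is generated by these conjugates and each conjugate lies in $\langle [a,b]\rangle$, I conclude $G' \subseteq \langle [a,b] \rangle$; the reverse inclusion $\langle [a,b] \rangle \subseteq G'$ is immediate. I do not expect a serious obstacle: once $G'$ is identified with the normal closure of $[a,b]$, the remaining work is just the observation that conjugation preserves orders and that orders determine subgroups in a cyclic group.
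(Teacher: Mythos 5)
Your proof is correct and is essentially the same argument as the one behind the cited lemma: your observation that each conjugate $[a,b]^g$ has the same order as $[a,b]$ and therefore generates the same subgroup of the cyclic group $G'$ is exactly an unpacked proof that $\langle [a,b] \rangle$, being a subgroup of the cyclic normal subgroup $G'$, is itself normal in~$G$, after which the abelianness of $G/\langle [a,b] \rangle$ gives $G' \subseteq \langle [a,b] \rangle$. (Note that the paper only cites this lemma from \cite{GhaderpourMorris-Nilpotent} rather than proving it, and that your appeal to ``unique subgroup of each order'' relies on the standing assumption that $G$ is finite.)
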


\begin{cor} \label{aCents->proper}
Suppose 
\noprelistbreak
	\begin{itemize}
	\item $\langle a, G' \rangle = G$, 
	and
	\item $G'$ is cyclic of square-free order. 
	\end{itemize}
Then $a$ does not centralize any nontrivial subgroup of~$G'$.
\end{cor}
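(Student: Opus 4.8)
The plan is to argue by contradiction, reducing to a prime-order subgroup and then killing its complement inside~$G'$. Suppose $a$ centralizes some nontrivial subgroup~$H$ of~$G'$. Since $G'$ is cyclic of square-free order, I would pick a prime $p \mid |H|$; the unique subgroup~$P$ of~$G'$ of order~$p$ is then contained in~$H$ (in a cyclic group the subgroup of order~$p$ lies inside every subgroup whose order is divisible by~$p$), so $a$ centralizes~$P$. Write $G' = P \times Q$, where $Q$ is the unique subgroup of~$G'$ of order $|G'|/p$; being characteristic in~$G'$, both factors are normal in~$G$.

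Next I would pass to $\widehat{G} = G/Q$. Since $G' \normal G$ and $Q$ is characteristic in~$G'$, we have $Q \normal G$, and $\widehat{G}' = G'/Q \cong P$ is cyclic of prime order~$p$. The reason to quotient by the whole complement~$Q$ (rather than something smaller) is that the image of~$P$ is then all of $\widehat{G}'$: because $a$ centralizes~$P$, its image~$\widehat{a}$ centralizes $\widehat{G}'$, and since $\widehat{G}'$ is abelian this forces $\widehat{G}' \subseteq Z(\widehat{G})$. This is precisely where square-freeness is used, as it guarantees that the Sylow $p$-subgroup~$P$ has prime order and splits off as a direct factor, so that killing~$Q$ turns the centralized subgroup into the \emph{entire} commutator subgroup of the quotient.

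Finally, I would write $\widehat{G} = \langle \widehat{a}, \widehat{G}' \rangle = \langle \widehat{a}, \widehat{b} \rangle$, where $\widehat{b}$ generates the cyclic group $\widehat{G}'$. Since $\widehat{G}'$ is cyclic, \cref{G'=ab} gives $\widehat{G}' = \bigl\langle [\widehat{a}, \widehat{b}] \bigr\rangle$. But $\widehat{b} \in \widehat{G}' \subseteq Z(\widehat{G})$, so $[\widehat{a}, \widehat{b}] = e$, whence $\widehat{G}'$ is trivial, contradicting $\widehat{G}' \cong \integer_p$.

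The main obstacle here is conceptual rather than computational: the key move is to quotient by the complementary factor~$Q$ so that the centralized subgroup becomes the whole commutator subgroup of the quotient, which makes that subgroup central and sets up a one-line application of \cref{G'=ab}. Once that reduction is in place, each verification (normality of~$Q$, the isomorphism $\widehat{G}' \cong \integer_p$, and the centrality) is routine.
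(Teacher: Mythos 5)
Your proof is correct, but it takes a genuinely different route from the paper's. The paper argues directly: taking a generator $\gamma$ of~$G'$, it passes to $\underline{G} = G/\langle [a,\gamma] \rangle$, where $\underline{\gamma}$ becomes central, and invokes \cref{Cent->Divides} (with $|\quot{G}/\langle \quot{a} \rangle| = 1$) to conclude that $\underline{G}$ is abelian; hence $\langle [a,\gamma] \rangle = G'$, which says the endomorphism $x \mapsto [a,x]$ of the finite cyclic group~$G'$ is surjective, hence injective, so its kernel---the centralizer of~$a$ in~$G'$---is trivial. You instead argue by contradiction and localize at a single prime: square-freeness lets you split $G' = P \times Q$ with $P \cong \integer_p$ centralized by~$a$, and killing the complement~$Q$ turns~$P$ into the whole commutator subgroup of $\widehat{G} = G/Q$, which is then central, so \cref{G'=ab} forces it to be trivial. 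Both arguments use square-freeness in an essential way (yours through the splitting $P \times Q$, the paper's through the hypothesis of \cref{Cent->Divides}). The paper's version buys a stronger conclusion in passing---$[a,\gamma]$ actually generates~$G'$, i.e., the commutator map is a bijection of~$G'$---and treats all primes at once, while yours isolates exactly where square-freeness enters and is more self-contained at the final step. Indeed, you could simplify your ending further: once $\widehat{G}' \subseteq Z(\widehat{G})$ and $\widehat{G} = \langle \widehat{a}, \widehat{G}' \rangle$, the group $\widehat{G}$ is generated by a single element together with central elements, so it is abelian outright, and even \cref{G'=ab} is not needed.
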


\begin{proof}
Let $\gamma$ be a generator of the cyclic group~$G'$, and let $\underline{G} = G / \langle [a, \gamma] \rangle$, so $\underline{a}$ centralizes $\underline{\gamma}$. Then $\underline{G}'  = \langle \underline{\gamma} \rangle \subseteq Z( \underline{G} )$, so \cref{Cent->Divides} tells us that $|\underline{G}'| = | [\underline{a}, \underline{\gamma}]|$ is a divisor of $|\quot{G}/\langle \quot{a} \rangle| = 1$. This means $\underline{G}$ is abelian, so $\langle [a, \gamma] \rangle = G' = \langle \gamma \rangle$. This implies that $a$ does not centralize any nontrivial power of~$\gamma$. In other words, $a$ does not centralize any nontrivial subgroup of~$G'$.
\end{proof}

\begin{lem} \label{27grp}
Suppose 
\noprelistbreak
	\begin{itemize}
	\item $G' = \integer_{3^\mu}$ is cyclic of order~$3^\mu$, for some $\mu \in \natural$,
	and
	\item $G/(G')^3$ is a nonabelian group of order~$27$.
	\end{itemize}
Then
\noprelistbreak
	\begin{enumerate}
	\item \label{27grp-subgrp}
	the elements of order~$3$ (together with~$e$) form a subgroup of~$G$,
	\item \label{27grp-mu=1}
	$\mu = 1$ (so $|G| = 27$),
	and
	\item \label{27grp-a3b3}
	$ (ab)^3 = a^3 b^3$ for all $a,b \in G$.
	\end{enumerate}
\end{lem}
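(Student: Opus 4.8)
The plan is to first pin down the coarse structure of~$G$, then prove part~\pref{27grp-mu=1} (which is the crux), and finally read off parts~\pref{27grp-a3b3} and~\pref{27grp-subgrp} as consequences.

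\textbf{Setup.} Since $(G')^3$ has index~$3$ in the cyclic group $G' \cong \integer_{3^\mu}$, the hypothesis $|G/(G')^3| = 27$ forces $|G| = 3^{\mu+2}$; thus $G$ is a $3$-group with $|G/G'| = 9$. A nilpotent group with cyclic abelianization is abelian, and $G$ is nonabelian (as $G' \neq \{e\}$), so $G/G'$ is not cyclic; hence $G/G' \cong \integer_3 \times \integer_3$. As $G' \subseteq \Phi(G)$, this shows $G$ is $2$-generated, say $G = \langle a,b \rangle$, with $a^3, b^3 \in G'$. By \cref{G'=ab}, $c := [a,b]$ generates~$G'$, so $c$ has order~$3^\mu$, and conjugation gives $a^{-1}ca = c^{k}$ and $b^{-1}cb = c^{\ell}$ with $k, \ell \equiv 1 \pmod 3$ (the image of a $3$-group in $\Aut(\integer_{3^\mu})$ lies in its Sylow $3$-subgroup, which consists of the residues $\equiv 1 \pmod 3$).

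\textbf{The crux (part \pref{27grp-mu=1}).} I would assume, for a contradiction, that $\mu \ge 2$, and compute $[a^3,b]$ in two ways. Writing $a^3 = c^i$, the first way gives $[a^3,b] = [c^i,b] = c^{\,i(\ell-1)}$; collecting commutators the other way gives $[a^3,b] = [a,b]^{a^2}[a,b]^{a}[a,b] = c^{\,1+k+k^2}$. A short calculation with $k \equiv 1 \pmod 3$ shows $1+k+k^2 \equiv 3 \pmod 9$, with $(1+k+k^2)/3$ a unit mod~$3$, so the resulting identity $1+k+k^2 \equiv i(\ell-1) \pmod{3^\mu}$ forces (on reducing mod~$3$) that $i \not\equiv 0 \pmod 3$. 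Hence $a^3 = c^i$ generates~$G'$, so $\langle a \rangle \supseteq G'$ and $\langle a \rangle \cong \integer_{3^{\mu+1}}$ is a normal subgroup of index~$3$. Now write $b^{-1}ab = a^m$. Then $c = [a,b] = a^{m-1}$ generates~$G'$, which forces $\gcd(m-1, 3^{\mu+1}) = 3$, i.e.\ $m \equiv 1 \pmod 3$ but $m \not\equiv 1 \pmod 9$. Since $b^3 \in G' \subseteq \langle a \rangle$ is centralized by~$a$, conjugating $a$ by $b^3$ yields $m^3 \equiv 1 \pmod{3^{\mu+1}}$. But writing $m = 1 + 3e$ with $e \not\equiv 0 \pmod 3$ gives $m^3 \equiv 1 + 9e \pmod{27}$, and $3^{\mu+1} \ge 27$ then forces $e \equiv 0 \pmod 3$, a contradiction. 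Therefore $\mu = 1$ and $|G| = 27$.

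\textbf{Consequences (parts \pref{27grp-a3b3} and \pref{27grp-subgrp}).} With $\mu = 1$ the image of~$G$ in $\Aut(G') \cong \Aut(\integer_3)$ is a $3$-group, hence trivial, so $G' \subseteq Z(G)$ and $G$ has nilpotency class~$2$. The class-$2$ identity $(ab)^3 = a^3 b^3 [b,a]^3$, together with $[b,a]^3 = e$ (as $[b,a] \in G' \cong \integer_3$), gives $(ab)^3 = a^3 b^3$, which is part~\pref{27grp-a3b3}. This says precisely that $x \mapsto x^3$ is an endomorphism of~$G$, so its kernel $\{\, x \mid x^3 = e \,\}$ is a subgroup; this kernel is exactly the set of elements of order~$3$ together with~$e$, which is part~\pref{27grp-subgrp}. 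The main obstacle is the crux: the two evaluations of $[a^3,b]$ produce a cyclic maximal subgroup $\langle a \rangle \supseteq G'$, after which the relation $m^3 \equiv 1 \pmod{3^{\mu+1}}$ (from $b^3$ lying in $\langle a \rangle$) clashes with $m \not\equiv 1 \pmod 9$ (needed to make $G'$ as large as~$\integer_{3^\mu}$); the one calculation to handle with care is $1+k+k^2 \equiv 3 \pmod 9$, which is what keeps $a^3$ out of~$(G')^3$.
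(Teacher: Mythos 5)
Your proof is correct, but it takes a genuinely different route from the paper's. The paper leans on the regularity theory of $3$-groups with cyclic commutator subgroup: it first records the Huppert-style fact that $(ab)^3 \in a^3 b^3 (G')^3$ for all $a,b \in G$, from which part~\pref{27grp-subgrp} is immediate for \emph{every}~$\mu$; for part~\pref{27grp-mu=1} it normalizes a generating pair so that $b^3 \in (G')^3$, reduces to $\mu \le 2$ by modding out $(G')^9$ so that $(G')^3$ is central, and then quotes Huppert's Satz III.10.6(b) (namely $[a,b^3]=e \Rightarrow [a,b]^3=e$) together with \cref{G'=ab}; part~\pref{27grp-a3b3} is then again immediate from regularity. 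You instead prove part~\pref{27grp-mu=1} first, by a self-contained commutator computation: evaluating $[a^3,b]$ both as $c^{i(\ell-1)}$ and as $c^{1+k+k^2}$, using $1+k+k^2 \equiv 3 \pmod{9}$ to force $a^3$ to generate~$G'$, and then deriving the contradiction between $m^3 \equiv 1 \pmod{3^{\mu+1}}$ and $m \not\equiv 1 \pmod{9}$ from the cyclic maximal subgroup $\langle a \rangle \supseteq G'$; only afterwards do you deduce parts~\pref{27grp-subgrp} and~\pref{27grp-a3b3} from the class-$2$ identity $(ab)^3 = a^3 b^3 [b,a]^3$ once $|G| = 27$ is known. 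What your approach buys is independence from Huppert's regular-$p$-group machinery --- everything reduces to elementary commutator calculus plus the structure of $\Aut(\integer_{3^\mu})$; what the paper's approach buys is brevity, and the fact that part~\pref{27grp-subgrp} is obtained uniformly for all~$\mu$ rather than only after $\mu = 1$ is established (which is all the lemma asserts, so your ordering is logically sound). One presentational nit: your parenthetical ``on reducing mod~$3$'' should really say ``reducing mod~$9$'' (equivalently, dividing the congruence $3u \equiv 3is \pmod{3^\mu}$ by~$3$ and then reducing mod~$3$, which is legitimate precisely because of your standing assumption $\mu \ge 2$); your explicit computation of $1+k+k^2 \equiv 3 \pmod{9}$ and the remark that $(1+k+k^2)/3$ is a unit make clear this is what you intend, so it is a wording issue rather than a gap.
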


\begin{proof}
Note that $|G| = 3^{\mu+2}$, so $G$ is a $3$-group.
Since $G'$ is cyclic (and $3$ is odd), it is not difficult to show\refnotelower{27grp-regularPf}
	\begin{align} \label{27grp-regular}
	\text{$(ab)^3 \in a^3 b^3 (G')^3$, for all $a,b \in G$.}
	\end{align}
(This is a special case of \cite[Satz III.10.2(c), p.~322]{Huppert}.)

\pref{27grp-subgrp} This is immediate from \pref{27grp-regular}. (This is a special case of \cite[Satz III.10.6(a), p.~326]{Huppert}.)

\pref{27grp-mu=1} 
Since $G/G' \iso \integer_3 \times \integer_3$, there is a $2$-element generating set $\{a,b\}$ of~$G$. (In fact, every minimal generating set has exactly two elements \cite[3.15, p.~273]{Huppert}.)
Since $a^3, b^3 \in G'$, we see from \pref{27grp-regular} that we may assume $b^3 \in (G')^3$ (by replacing $b$ with $ba$ or~$ba^{-1}$, if necessary). Furthermore, by modding out $(G')^9$, there is no harm in assuming $\mu \le 2$, so $(G')^3 \subseteq Z(G)$. Therefore $[a, b^3] = e$, so \cite[Satz 10.6(b), p.~326]{Huppert} tells us that $[a,b]^3 = e$. Since $\langle [a,b] \rangle = G'$ \csee{G'=ab}, this implies $\mu = 1$.

\pref{27grp-a3b3} Since $\mu = 1$, we have $(G')^3 = \{e\}$, so this is immediate from \pref{27grp-regular}.
\end{proof}

\subsection{\texorpdfstring{Proofs of \Cref{G'=pq,9pq}}{Proofs of the corollaries}}

\begin{proof}[\bf Proof of \cref{G'=pq}]
Assume, without loss of generality, that $p < q$. Then Sylow's Theorem implies that $G'$ has a unique Sylow $q$-subgroup~$Q$, so $Q \normal G$.  Therefore $G$ acts on~$Q$ by conjugation. Since $Q \iso \integer_q$, we know that the automorphism group of $Q$ is abelian (more precisely, it is cyclic of order $q-1$), so this implies that $G'$ centralizes~$Q$. So $Q \subseteq Z(G')$. Since $G'/Q$ is cyclic (indeed, it is of prime order, namely,~$p$), this implies that $G'$ is abelian. Since $p \neq q$, we know that every abelian group of order~$pq$ is cyclic, so we conclude that $G'$ is cyclic. Therefore \cref{MAINTHM} applies.
\end{proof}

\begin{proof}[\bf Proof of \cref{9pq}]
Assume $|G| = 9pq$. We may assume $p$ and~$q$ are odd, for otherwise $|G|$ is of the form $18p$, so \cite[Prop.~9.1]{M2Slovenian-LowOrder} applies. Therefore $|G|$ is odd, so it suffices to show $|G'|$ is a divisor of $pq$, for then \cref{G'=pq} (or \cref{KeatingWitteThm}) applies.

Note that we may assume $3 \notin \{p,q\}$, for otherwise $|G|$ is of the form $27p$, so \cref{27p} applies. Therefore, neither $|\!\Aut(\integer_9)| = 6$ nor $|\!\Aut(\integer_3 \times \integer_3)| = 48$ is divisible by either~$p$ or~$q$, so Burnside's Transfer Theorem \cite[Thm.~7.4.3, p.~252]{Gorenstein-FinGrps} implies that $G$ has a normal subgroup~$N$ of order~$pq$. 
Since $|G/N| = 9$, and every group of order~$9$ is abelian, we know that $G' \subseteq N$, so  $|G'|$ is a divisor of $|N| = pq$, as desired.
\end{proof}

Let us also record the fact that almost all cases of \cref{MAINTHM} will be proved by using \cref{MarusicMethod}:

\begin{thm}
Assume
	\begin{itemize}
	\item $S$ is a minimal generating set for a nontrivial, finite group~$G$ of odd order,
	\item $G'$ is cyclic of order~$p^\mu q^\nu$, where $p$ and~$q$ are prime, and $\mu,\nu \in \natural$, 
	\item for all $s \in S$, we have $s \notin G'$ and $G' \not\subseteq \langle s \rangle$,
	\item $G/(G')^3$ is \textbf{not} the nonabelian group of order~$27$ and exponent~$3$,
	and
	\item either $G/G' \not\iso \integer_3 \times \integer_3$, or $\#S \neq 2$.
	\end{itemize}
Then, for every $\gamma \in G$', there exists a hamiltonian cycle~$C$ in $\Cay \bigl( G/G' ; S \bigr)$, such that $\gamma \voltage C$ generates~$G'$.
\end{thm}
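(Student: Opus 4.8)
The plan is to reduce, via the square-free machinery of the preliminaries, to the case $G' \iso \integer_p \times \integer_q$, and then to construct an explicit finite family of hamiltonian cycles in the \emph{abelian} Cayley graph $\Cay(\quot{G}; S)$ whose voltages are spread widely enough across $\integer_p \times \integer_q$ to satisfy the hypotheses of \cref{MarusicMethod34}.

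First I would dispose of the case $p = q$: then $G'$ is a cyclic $p$-group and the conclusion follows from \cref{KeatingWitteThm} and the voltage-producing method behind it, so I assume $p \neq q$. Next I pass to $\underline{G} = G/\Phi$, where $\Phi$ is the Frattini subgroup of~$G'$, so that $\underline{G}' = G'/\Phi \iso \integer_p \times \integer_q$; by \cref{PQStillMin} the image of~$S$ is again a minimal generating set, and one checks that the remaining hypotheses survive the quotient. Since $\quot{G} = G/G' \iso \underline{G}/\underline{G}'$ canonically, a hamiltonian cycle in $\Cay(\quot{G};S)$ is literally a hamiltonian cycle in $\Cay(\underline{G}/\underline{G}';S)$, and its two voltages differ only by the projection $G' \to \underline{G}'$. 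Because an element of the cyclic group~$G'$ generates~$G'$ exactly when its image generates the Frattini quotient $\underline{G}'$, and because $\gamma \mapsto \underline{\gamma}$ carries~$G'$ onto~$\underline{G}'$, it suffices to prove the statement for~$\underline{G}$. Thus I may assume outright that $G' = \integer_p \times \integer_q$ with $p \neq q$.

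Now observe that the desired conclusion, ``for every~$\gamma$ there is a hamiltonian cycle~$C$ with $\langle \gamma \voltage C\rangle = G'$,'' is exactly condition~($*$) of \cref{MarusicMethod}, read off a fixed finite family $C_1,\dots,C_r$. So it is enough to produce hamiltonian cycles $C_1,\dots,C_r$ in $\Cay(\quot{G};S)$ satisfying either \fullcref{MarusicMethod34}{3} (three cycles with pairwise voltage-differences generating~$G'$) or \fullcref{MarusicMethod34}{4} (four cycles toggling the $\integer_p$- and $\integer_q$-parts independently); I would additionally arrange a common oriented edge, so that the full \cref{MarusicMethod} applies downstream. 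To manufacture these cycles I start from a single hamiltonian cycle in the abelian Cayley graph $\Cay(\quot{G};S)$ (such a cycle always exists) and then apply local reroutings that leave the multiset of edge-labels unchanged but permute their order; each such move alters the voltage by a commutator $[s,t]$ with $s,t \in S$, hence by an element of~$G'$. Since the commutators of the generators generate~$G'$ (for instance $G' = \langle [a,b] \rangle$ when $\#S = 2$, by \cref{G'=ab}), a well-chosen sequence of moves shifts the voltage by a generator of~$\integer_p$, of~$\integer_q$, or of all of~$G'$, yielding the required spread.

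The main obstacle is the construction step itself, organized as a case analysis on $\#S$ and the structure of~$\quot{G}$. When $\#S \ge 3$ there is ample room: several independent commutators are available, and the three-cycle criterion \fullcref{MarusicMethod34}{3} is readily met. The delicate case is $\#S = 2$, where $\quot{G}$ is a two-generator abelian group and the only commutator is a power of~$[a,b]$; here one must exhibit by hand hamiltonian cycles on what is essentially a grid/torus whose voltages realize a generator and twice a generator of $G' = \langle [a,b]\rangle$ — using that $p$ and~$q$ are odd, so that~$2$ is a unit — and verify that each rerouted walk genuinely visits every vertex once. It is precisely to keep enough maneuvering room in this tight regime that the hypotheses exclude the two-generator group $\quot{G} \iso \integer_3 \times \integer_3$ and the nonabelian order-$27$ exponent-$3$ quotient; those residual configurations are too small for the reroutings and must be settled separately (via \cref{27grp} and direct arguments), while the assumptions $s \notin G'$ and $G' \not\subseteq \langle s \rangle$ keep us out of the regimes already covered by \cref{GenInG'} and \cref{GenNormal}. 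Deciding, in each configuration, whether a single repeated move suffices (the three-cycle criterion) or whether the two prime-parts must be switched by two different structural moves (the four-cycle criterion \fullcref{MarusicMethod34}{4}) is what drives the remaining bookkeeping.
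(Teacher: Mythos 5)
Your strategy coincides with the paper's: reduce to $G' \iso \integer_p \times \integer_q$ via \cref{FreeLunch} (with \cref{PQStillMin}), recognize the conclusion as condition~({\Large$*$}) of \cref{MarusicMethod}, and verify it through \cref{MarusicMethod34} by producing several hamiltonian cycles in $\Cay(\quot{G};S)$ whose voltages are suitably spread. Your first two paragraphs are sound: the disposal of $p=q$ via the prime-power theory (\cref{G'=p}, where two cycles suffice because an element of a cyclic $p$-group generates unless it lies in the unique maximal subgroup), and the Frattini-quotient reduction, under which generation of the cyclic group~$G'$ can be tested modulo~$\Phi$ and the hypotheses survive.

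The genuine gap is your third paragraph, which is where essentially all of the paper's work lies; describing it as bookkeeping driven by ``well-chosen'' reroutings is not just incomplete but inaccurate in ways that would break the argument. First, the available moves (\cref{StandardAlteration}, \cref{KW43}) do not shift the voltage by a commutator $[s,t]$: they shift it by products of conjugates such as $[a^k,b^{-1}]\,[a^k,b^{-1}]^a$ (so one needs the odd-order and non-centralizing facts to see these generate anything), and each move requires specific subpaths to be present in the current cycle; after one move, the subpaths needed for the next move and the common oriented edge demanded by \cref{MarusicMethod} may be destroyed, and tracking this is exactly what the paper's subcases do. Second, in configurations your hypotheses do \emph{not} exclude, the generic move yields a trivial shift: in \cref{a<ab<abc} one can have $[b,a^{A-1}]=e$, forcing the construction of an entirely different base cycle~$Y_1$, and in \cref{aCentG'} the relevant commutators vanish and a Three-Subgroup-Lemma argument is needed before \cref{a<ab<abc} can be invoked. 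Third, in \cref{G'NotIn<abc>} no generator $a$ has companions $b,c$ with $\langle [a,b],[a,c]\rangle = G'$, so no sequence of reroutings of a single cycle can move its voltage by a generator of~$G'$; the paper instead partitions $S = S_p \cup S_q$, obtains voltage-spreading pairs of cycles inside $\langle S_p \rangle$ and $\langle S_q \rangle$ from \cref{G'=p} and \cref{G'=p/Z}, and splices them into hamiltonian cycles of $\Cay(\quot{G};S)$ --- a construction your blueprint does not contain. Finally, the hardest non-excluded case, $|\quot{a}|=|\quot{b}|=3$ with $\langle\quot{a}\rangle \neq \langle\quot{b}\rangle$ and $\#S\ge 3$ (\cref{ab=G'}, proved in \cref{a=b=3}), falls inside the regime you declare ``readily met.'' So the proposal is a faithful outline of the paper's approach, but its central step is asserted rather than proved, and the assertion as stated fails in several of the cases it must cover.
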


\section{The usual application of Maru\v si\v c's Method}

Applying \cref{MarusicMethod} requires the existence of more than one hamiltonian cycle in a quotient of $\Cay(G;S)$. In practice, one usually starts with a single hamiltonian cycle and modifies it in various ways to obtain the others that are needed. The following result describes a modification that will be used repeatedly in the proof of \cref{MAINTHM}.

\begin{lem}[cf.\ Durnberger \cite{Durnberger-semiprod} and Maru\v si\v c \cite{Marusic-HamCircCay}] \label{StandardAlteration}
Assume:
\noprelistbreak
	\begin{itemize}
	\item $C_0$ is an oriented hamiltonian cycle in $\Cay(\quot{G};S)$,
	\item $a,b \in S^{\pm1}$, $g \in G$, and $m \in \integer^+$, 
	\item $C_0$ contains:
	\noprelistbreak
		\begin{itemize}
		\item the oriented path $[ga^{-(m+1)}](a^m, b, a^{-m})$, 
		and
		\item either the oriented edge $[g](b)$ or the oriented edge $[gb](b^{-1})$. 
		\end{itemize}
	\end{itemize}
Then there are hamiltonian cycles $C_0$, $C_1$, \dots, $C_m$ in $\Cay(\quot{G};S)$, such that
	$$ \left( \bigl( \voltage C_0 \bigr)^{-1} \bigl( \voltage C_k \bigr) \right)^g 
	= \begin{cases}
	[a^k, b^{-1}] \, [a^k, b^{-1}]^a
		&\text{if $C_0$ contains $[g](b)$}, \\
	[b^{-1}, a^k] \, [a^k, b^{-1}]^a
		&\text{if $C_0$ contains $[gb](b^{-1})$}
	. \end{cases}$$
\end{lem}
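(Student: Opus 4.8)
My plan is to translate the two hypotheses into a rectangular grid inside $\Cay(\quot G;S)$, to build each $C_k$ by an explicit rerouting inside that grid, and to compute the voltage change homologically, using that the voltage group $G'$ is abelian. First I would name the vertices involved. Since $\quot G$ is abelian, $\quot a$ and $\quot b$ commute, so for $0\le j\le m+1$ the vertices $u_j=\quot{ga^{-j}}$ and $w_j=\quot{ga^{-j}b}=\quot{gba^{-j}}$ form a ladder: the $u_j$ are one rail, the $w_j$ the other, consecutive rail vertices are joined by $a$-edges, and each rung $u_j\,w_j$ is a $b$-edge. In this language the first hypothesis says $C_0$ contains the path $P$ that climbs the left rail $u_{m+1}\to\cdots\to u_1$, crosses rung~$1$, and descends the right rail $w_1\to\cdots\to w_{m+1}$; the second says $C_0$ contains the rung-$0$ edge $e_0=u_0\,w_0$, traversed $u_0\to w_0$ in the first case and $w_0\to u_0$ in the second. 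Because every interior rail vertex already spends both of its $C_0$-edges inside $P$, the cycle $C_0$ leaves the ladder only at the four corners $u_0,w_0,u_{m+1},w_{m+1}$.

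Next I would define $C_k$ (for $0\le k\le m$) by retaining every edge of $C_0$ that lies outside the ladder and replacing $P$ and $e_0$ by two new paths: one climbing $u_{m+1}\to\cdots\to u_{k+1}$, crossing rung~$k+1$, and descending $w_{k+1}\to\cdots\to w_{m+1}$; the other running $u_0\to\cdots\to u_k$, crossing rung~$k$, and returning $w_k\to\cdots\to w_0$. All of these are genuine edges because $a,b\in S^{\pm1}$. The two new paths are vertex-disjoint, together they cover every ladder vertex, and they join the corners in exactly the pattern that $P$ and $e_0$ did ($u_{m+1}$ to $w_{m+1}$, and $u_0$ to $w_0$); hence $C_k$ is again a single hamiltonian cycle, and $k=0$ recovers $C_0$.

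Finally I would compute the voltage. Reading both cycles from the base vertex $\quot g=u_0$ turns $\bigl((\voltage C_0)^{-1}\voltage C_k\bigr)^{g}$ into $Q_0^{-1}Q_k$, where $Q_i$ is the product of the edge-labels of $C_i$ read from $u_0$ \ccf{gVoltage}; equivalently it is the label product of the closed walk that runs $C_0$ backwards and then $C_k$ forwards from $u_0$. Since $G'$ is abelian, this label product depends only on the homology class of that walk, so the shared out-of-ladder edges contribute nothing and only the $1$-cycle $D=[C_k]-[C_0]$ remains. Computing $D$ shows that it is the sum of two grid rectangles, the one on levels $0,\dots,k$ (through $e_0$) and the one on levels $1,\dots,k+1$; reading their labels gives voltages $[a^k,b^{-1}]$ and $[a^k,b^{-1}]^{a}$ respectively, the conjugation by $a$ coming from basing the second rectangle at $u_0$ by a lasso along the edge $u_0\,u_1$. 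In the second case the reversal of $e_0$ reverses the first rectangle, turning its contribution into $[b^{-1},a^k]=[a^k,b^{-1}]^{-1}$; multiplying the two contributions yields the two displayed formulas.

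The real work is the bookkeeping in this last step: verifying that the common external arcs genuinely drop out (exactly where abelianness of $G'$ is used) and, above all, fixing every orientation so that $D$ splits into precisely these two rectangles traversed in the correct directions, since it is the orientation of the rung-$0$ rectangle that separates the two cases. Once that decomposition is secured, evaluating the two rectangle voltages is a one-line reading of $a$- and $b$-labels around a grid rectangle.
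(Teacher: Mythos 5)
Your proposal is correct, and the cycles $C_k$ you construct are exactly the ones in the paper's proof: keep every edge of $C_0$ outside the ladder, and reroute so that the crossings occur at rungs $k$ and $k+1$ instead of rungs $0$ and~$1$. Where you genuinely differ is in how the voltage change is computed. The paper writes $C_0 = [g](s_1,\ldots,s_n)$, locates the index $\ell$ at which the prefix product is $\quot{a}^{-1}$, writes $C_k$ as an explicit edge sequence, and then manipulates the label products directly, using \cref{gVoltage} and commutativity of~$G'$ to collect the two commutator factors. You instead observe that, since $G'$ is abelian, the label-product map on closed walks based at $\quot{g}$ factors through $H_1$ of the Cayley graph, so $\bigl((\voltage C_0)^{-1}(\voltage C_k)\bigr)^g$ depends only on the $1$-cycle $[C_k]-[C_0]$; the shared external arcs cancel, the difference splits into the two grid rectangles, and each rectangle reads off a commutator, with the conjugation by~$a$ coming from the lasso along the edge $u_0u_1$ (whose label is $a^{-1}$, so the lasso conjugates by~$a$ in the paper's convention $x^a = a^{-1}xa$), and with the reversal of the rung-$0$ rectangle in the second case inverting $[a^k,b^{-1}]$ into $[b^{-1},a^k]$. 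I checked the orientation bookkeeping you flag as ``the real work'': the signed decomposition of $[C_k]-[C_0]$ into the two rectangles, and the identity $[b^{-1},a^k]=[a^k,b^{-1}]^{-1}$, do come out exactly as you state, in both cases. As for what each approach buys: your homological framing makes the two-commutator structure and the origin of the case distinction transparent, needs no tracking of where the rectangles sit inside the edge sequence, and generalizes at once to any finite collection of reroutings against an abelian voltage group; the paper's computation is more elementary and self-contained (no appeal to the factorization of $\pi_1$ through homology), staying entirely inside group multiplication, which matches how the lemma is then invoked in \cref{KW43} and later cases.
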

\begin{figure}
\begin{center}
	\vbox{
	\includegraphics{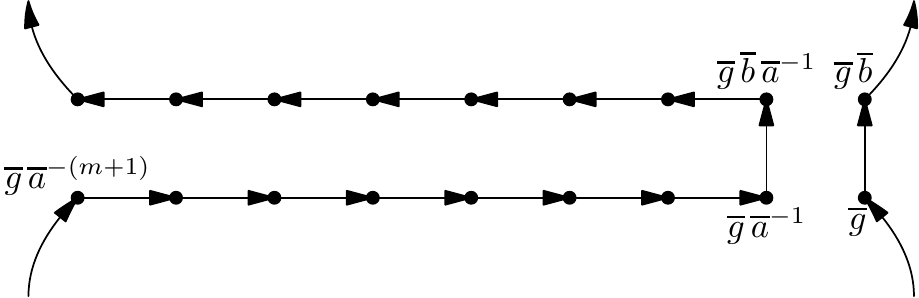}
	\vskip0.25in
	\includegraphics{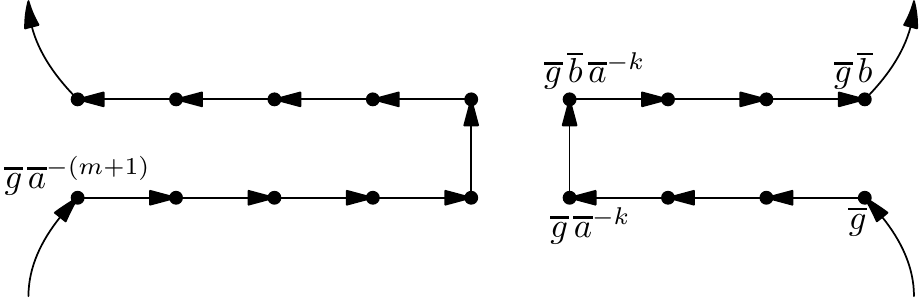}
	}
\end{center}
\caption{A portion of the hamiltonian cycles~$C_0$ (top) and~$C_k$ (bottom).}
\label{standardFig}
\end{figure}

\begin{proof}
Note that $[ga^{-(m+1)}](a^m, b, a^{-m})$ contains the subpath $[ga^{-(k+1)}](a^k, b, a^{-k})$ for $0 \le k \le m$.

\setcounter{case}{0}

\begin{case}
Assume that $C_0$ contains $[g](b)$. 
\end{case}
Construct $C_k$ by:
\noprelistbreak
	\begin{itemize}
	\item replacing the oriented edge $[g](b)$ with the oriented path $[g](a^{-k}, b, a^{k})$,
	and
	\item replacing the oriented path $[ga^{-(k+1)}](a^k, b, a^{-k})$ with the oriented edge $[ga^{-(k+1)}](b)$
	\end{itemize}
\csee{standardFig}.

To calculate the voltage of~$C_k$, write $C_0 = [g](s_1,\ldots,s_n)$. There is some~$\ell$ with $\quot{s_1}\cdots \quot{s_\ell} = \quot{a}^{-1}$, so
	$$ C_k = [g]\bigl( a^{-k}, b, a^{k}, \,  ( s_i)_{i=2}^{\ell-k} , \, b , (s_i)_{i=\ell+k+2}^n \bigr) .$$
For convenience, let
	$$ h =  \prod_{i=\ell+1}^n s_i \equiv  \left( \prod_{i=1}^\ell s_i \right)^{-1} \equiv a \pmod{G'}.$$
Then, from \cref{gVoltage} (and the fact that $G'$ is commutative), we have
	\begin{align*}
	\bigl(\voltage C_k \bigr)^g
	&= (a^{-k} b a^{k}) \left( \prod_{i=2}^{\ell-k} s_i \right) b  \left( \prod_{i=\ell+k+2}^n s_i \right) 
	\\&= (a^{-k} b a^{k}b^{-1}) \left( \prod_{i=1}^{\ell} s_i \right) a^{-k} b a^k b^{-1} \left( \prod_{i=\ell+1}^n s_i \right) 
	\\&= [a^{k}, b^{-1}] \cdot \left( \prod_{i=1}^{\ell} s_i \right) \left( \prod_{i=\ell+1}^n s_i \right) \cdot [a^k , b^{-1}]^h
	\\&= [a^{k}, b^{-1}] \cdot \bigl( \voltage C_0 \bigr)^g \cdot [a^k , b^{-1}]^a
	\\&= \bigl( \voltage C_0 \bigr)^g \cdot [a^{k}, b^{-1}] \,  [a^k , b^{-1}]^a
	. \end{align*}

\begin{case}
Assume that $C_0$ contains $[gb](b^{-1})$. 
\end{case}
This is similar.
Construct $C_k$ by:
\noprelistbreak
	\begin{itemize}
	\item replacing the oriented edge $[gb](b^{-1})$ with the oriented path $[gb](a^{-k}, b^{-1}, a^{k})$,
	and
	\item replacing the oriented path $[ga^{-(k+1)}](a^k, b, a^{-k})$ with the oriented edge $[ga^{-(k+1)}](b)$.
	\end{itemize}
(See \cref{standardFig}, but reverse the orientation of the paths in the right half of the figure.)

To calculate the voltage of~$C_k$, write $C_0 = [gb](s_1,\ldots,s_n)$. There is some~$\ell$ with $\quot{s_1}\cdots \quot{s_\ell} = \quot{ab}^{-1}$, so
	$$ C_k = [gb]\bigl( a^{-k}, b^{-1}, a^{k}, \,  ( s_i)_{i=2}^{\ell-k} , \, b , (s_i)_{i=\ell+k+2}^n \bigr) .$$
For convenience, let
	$$ h =  \prod_{i=\ell+1}^n s_i \equiv  \left( \prod_{i=1}^\ell s_i \right)^{-1} \equiv ab \pmod{G'}.$$
Then
	\begin{align*}
	\bigl(\voltage C_k \bigr)^{gb}
	&= (a^{-k} b^{-1} a^{k}) \left( \prod_{i=2}^{\ell-k} s_i \right) b  \left( \prod_{i=\ell+k+2}^n s_i \right) 
	\\&= (a^{-k} b^{-1} a^{k}b) \left( \prod_{i=1}^{\ell} s_i \right) a^{-k} b a^k b^{-1} \left( \prod_{i=\ell+1}^n s_i \right) 
	\\&= b^{-1}(b a^{-k} b^{-1} a^{k}) b \cdot \left( \prod_{i=1}^{\ell} s_i \right) \left( \prod_{i=\ell+1}^n s_i \right) \cdot [a^k , b^{-1}]^h
	\\&= [b^{-1}, a^{k}]^b \cdot \bigl( \voltage C_0 \bigr)^{gb} \cdot [a^k , b^{-1}]^{ab}
	\\&= \bigl( \voltage C_0 \bigr)^{gb} \cdot [b^{-1}, a^{k}]^b \,  [a^k , b^{-1}]^{ab}
	. \qedhere \end{align*}
\end{proof}

\begin{rem} \label{StandardAlterationIsG'}
In the situation of \cref{StandardAlteration}, we have $\left\langle \bigl( \voltage C_0 \bigr)^{-1} \bigl( \voltage C_k \bigr) \right\rangle = \langle [a^k, b^{-1}] \rangle$ if either
	\begin{enumerate}
	\item \label{StandardAlterationIsG'-NoInvert}
	$C_0$ contains $[g](b)$ and $a$ does not invert any nontrivial element of $\langle [a^k, b^{-1}] \rangle$,
	or
	\item  \label{StandardAlterationIsG'-NoCent}
	$C_0$ contains $[gb](b^{-1})$ and $a$ does not centralize any nontrivial element of $\langle [a^k, b^{-1}] \rangle$.
	\end{enumerate}
Note that if $|G|$ is odd, then the hypothesis on~$a$ in \pref{StandardAlterationIsG'-NoInvert} is automatically satisfied (because no element of odd order can ever invert a nontrivial element).
\end{rem}

\begin{cor}[cf.\ {\cite[Case iv]{Durnberger-prime} and \cite[Case 4.3]{KeatingWitte}}] \label{KW43}
Assume
\noprelistbreak
	\begin{itemize}
	\item $a \in S$ with $\langle \quot{a} \rangle \neq \quot{G}$,
	\item $(s_i)_{i=1}^d$ is a hamiltonian cycle in $\Cay \bigl( \quot{G}/\langle \quot{a} \rangle ; S \bigr)$,
	\item $a^r \prod_{i=1}^d s_i \in G'$, with $0 \le r \le |\quot{a}| - 2$,
	and
	\item $0 \le k \le |\quot{a}| - 3$.
	\end{itemize}
Then the walk
	\begin{align*} C_k = \bigl( a^k, s_1, & a^{-(k+1)}, 
	 ( s_{2i}, a^{|\quot{a}|-2}, s_{2i+1}, a^{-(|\quot{a}|-2)} )_{i=1}^{(d-3)/2}, 
	\\& s_{d-1}, a^r, s_d, 
	a^{-(|\quot{a}|-k-2)}, s_1, a^{|\quot{a}|-k-3}, (s_i)_{i=2}^{d-1}, 
	a^{-(|\quot{a}|-r-2)}, s_d \bigr)
	 \end{align*}
is a hamiltonian cycle in $\Cay( \quot{G}; S)$ \csee{KW43Fig}, and we have
	$$ \voltage C_k = (\voltage C_0) [a^{-k},  s_1^{-1}]  [a^{-k} , s_1^{-1}] ^{a^{-1}} .$$
\end{cor}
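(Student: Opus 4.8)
The plan is to prove the two assertions—that each $C_k$ is a hamiltonian cycle in $\Cay(\quot G;S)$, and that its voltage is the stated element of~$G'$—by first verifying the single cycle $C_0$ (the case $k=0$) by hand, and then recognizing every~$C_k$ as an instance of the modification in \cref{StandardAlteration}. Throughout, write $n = |\quot a|$ and $d = |\quot G/\langle\quot a\rangle|$; since $|G|$ is odd, so is~$d$, which is what makes the index $(d-3)/2$ an integer.

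To check that $C_0$ is a hamiltonian cycle, I would use the fact that $\quot G$ is abelian to introduce coordinates. Let $c_j = \quot{s_1}\cdots\quot{s_j}$ be the coset representatives singled out by the quotient cycle, and write each vertex of $\Cay(\quot G;S)$ uniquely as $(t,j)$, meaning $t\quot a + c_j$ with $t\in\integer_n$ and $0\le j<d$. In these coordinates an edge $a^{\pm1}$ acts by $(t,j)\mapsto(t\pm1,j)$, and—since each $s_i$ is traversed only on leaving coset~$i-1$—an edge $s_i$ acts by $(t,i-1)\mapsto(t,i)$; the hypothesis $a^r\prod_i s_i\in G'$ says exactly that the closing edge $s_d$ reglues coset~$d$ to coset~$0$ with a level shift, $(t,d-1)\mapsto(t-r,0)$. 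Tracing $C_0$ in these coordinates, the zig-zag pass (the $a^{\pm(n-2)}$ runs, together with the opening $a^{-1}$ and the $a^r$ run) covers every level except~$n-2$ in each of the cosets $2,\dots,d-2$ and prescribed levels in cosets $0$,~$1$, and~$d-1$, while the second pass (the run $(s_i)_{i=2}^{d-1}$ sitting at level~$n-2$, framed by the $a^{-(n-2)}$, $a^{n-3}$, and $a^{-(n-r-2)}$ runs) supplies level~$n-2$ in the middle cosets and fills in the levels still missing from cosets $0$,~$1$, and~$d-1$. Confirming that no vertex is repeated—together with the length count, which gives exactly $nd$ edges—shows that all $nd$ vertices are covered once, so $C_0$ is a hamiltonian cycle \csee{KW43Fig}.

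With $C_0$ in hand, I would obtain the remaining cycles from \cref{StandardAlteration}, applied with the roles of its ``$a$'' and ``$b$'' played by $a^{-1}$ and~$s_1$, and with $g=e$ and $m=n-3$. The two ingredients that lemma requires are both present in~$C_0$: the oriented edge $[e](s_1)$ is the very first edge, and the oriented path $[a^{\,m+1}](a^{-m},s_1,a^{m})$ is a subpath of the second $s_1$-region $(a^{-(n-2)},s_1,a^{n-3})$. A termwise comparison then shows that the cycle produced by \cref{StandardAlteration} is precisely the $C_k$ of the statement: replacing $[e](s_1)$ by $[e](a^{k},s_1,a^{-k})$ makes the trailing $a^{-k}$ absorb the following $a^{-1}$ into $a^{-(k+1)}$, reproducing the head $(a^{k},s_1,a^{-(k+1)})$, while collapsing the subpath $[a^{\,k+1}](a^{-k},s_1,a^{k})$ to the single edge $[a^{\,k+1}](s_1)$ turns $(a^{-(n-2)},s_1,a^{n-3})$ into $(a^{-(n-k-2)},s_1,a^{n-k-3})$. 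Since \cref{StandardAlteration} guarantees that each such $C_k$ is again a hamiltonian cycle, this settles hamiltonicity for all~$k$ in range. The voltage formula is then exactly the Case~1 conclusion of \cref{StandardAlteration} under this substitution, specialized to $g=e$ so that the conjugation disappears: it yields $(\voltage C_0)^{-1}(\voltage C_k)=[a^{-k},s_1^{-1}]\,[a^{-k},s_1^{-1}]^{a^{-1}}$, which rearranges to the claimed identity.

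The step I expect to be the main obstacle is the hamiltonicity of~$C_0$: the level-tracking through the zig-zag, the gluing shift by~$r$, and the check that every coset receives every level exactly once are all delicate (the asymmetry between the $a^{n-2}$ and $a^{n-3}$ runs is the natural place for an off-by-one slip). Once $C_0$ is secured, fitting $C_k$ to \cref{StandardAlteration} is routine bookkeeping and the voltage falls out of the lemma with no further computation.
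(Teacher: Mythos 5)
Your proposal is correct and follows essentially the same route as the paper: the paper's proof likewise treats the hamiltonicity of~$C_0$ as read off from \cref{KW43Fig} and then applies \cref{StandardAlteration} with $g = e$, $b = s_1$, and $a^{-1}$ in the role of~$a$ (so $m = |\quot{a}|-3$), which gives both the cycles~$C_k$ and the voltage formula at once. Your explicit coordinate check of~$C_0$ and the termwise matching of the altered cycles against the stated~$C_k$ are just more detailed versions of what the paper leaves to the figure.
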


\begin{proof}
$C_0$ contains the oriented edge $(s_1)$ and the oriented path $[a^{|\quot{a}|-2}]( a^{-(|\quot{a}|-3)}, s_1, a^{|\quot{a}|-3})$, so we may apply \cref{StandardAlteration} with $g = e$, $b = s_1$, and $a^{-1}$ in the role of~$a$. 
\end{proof}

\begin{figure}[t]
\begin{center}
\includegraphics{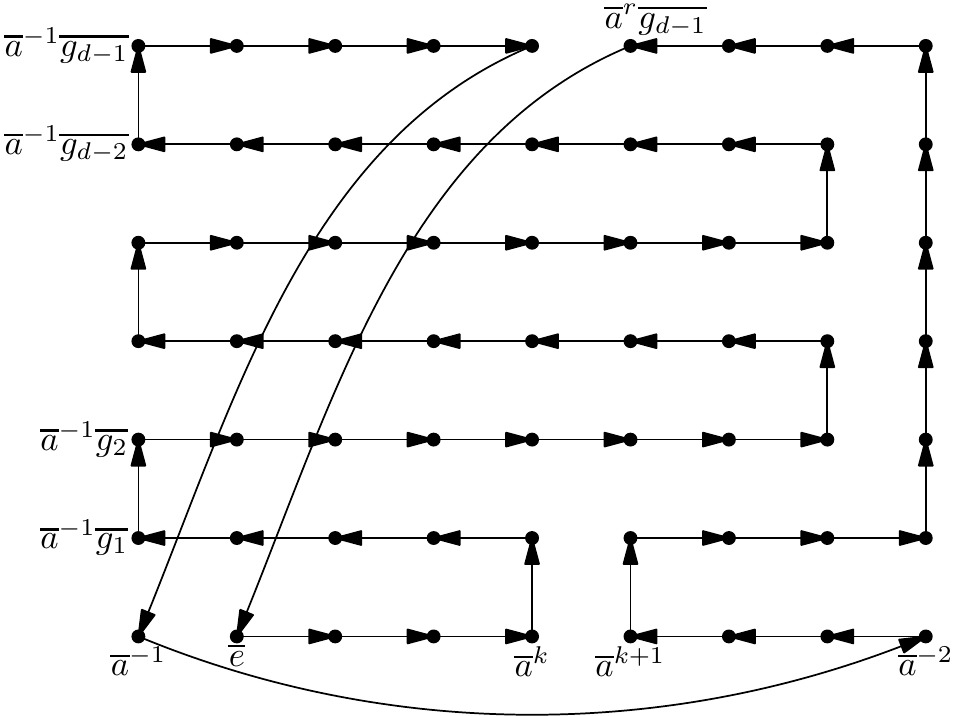}
\caption{A hamiltonian cycle $C_k$ in $\Cay \bigl( \quot{G}; S \bigr)$,
where $g_j = \prod_{i=1}^j s_i$.}
\label{KW43Fig}
\end{center}
\end{figure}

\section{Other applications of Maru\v si\v c's Method}

Here are some other situations in which we can apply \cref{MarusicMethod}.

\begin{thm}[{\cite[\S4 and \S5]{KeatingWitte}}] \label{G'=p}
Suppose 
\noprelistbreak
	\begin{itemize}
	\item $|G|$ is odd,
	\item $G' = \integer_{p^\mu}$ is cyclic of prime-power order,
	\item $S$ is a generating set of~$G$,
	\item $S \cap G' = \emptyset$,
	and
	\item $G$ is \textbf{not} the nonabelian group of order~$27$ with exponent~$3$.
	\end{itemize}
Then there exist hamiltonian cycles $C_1$ and~$C_2$ in $\Cay(G / G' ; S)$ that have an oriented edge in common, such that $(\voltage C_1)^{-1} (\voltage C_2)$ generates~$G'$.
\end{thm}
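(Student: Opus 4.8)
\noindent
The plan is to produce the required pair of cycles by taking \emph{any} hamiltonian cycle in $\Cay(\quot G;S)$ and applying a single standard alteration, so that the two cycles automatically share almost all of their edges while the voltage changes by a generator of~$G'$.

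First I would reduce to the case $G' = \integer_p$. Passing to $\underline G = G/(G')^p$ replaces $G'$ by $\underline G' = G'/(G')^p \iso \integer_p$ while leaving $\quot G = \underline G/\underline G'$, and hence the graph $\Cay(\quot G;S)$, unchanged. Since a subgroup of the cyclic $p$-group $\integer_{p^\mu}$ is the whole group precisely when it is not contained in the Frattini subgroup $(G')^p$, a voltage difference generates $G'$ as soon as its image generates $\underline G' = \integer_p$; this is the Frattini-quotient reduction underlying \cref{FreeLunch}. Moreover \cref{27grp} guarantees that the reduction cannot manufacture the excluded group: if $\underline G$ were nonabelian of order~$27$, then $\mu = 1$, so $G = \underline G$, and the exponent-$3$ case is exactly the one excluded in the hypotheses. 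So from now on $G' = \integer_p$, and conjugation gives a homomorphism $\chi\colon G \to \Aut(\integer_p) \iso \integer_p^\times$. Because $|G|$ is odd, $\operatorname{im}\chi$ has odd order, so $-1 \notin \operatorname{im}\chi$: no element of~$G$ inverts a nontrivial element of~$G'$. This is precisely the hypothesis on~$a$ in \fullcref{StandardAlterationIsG'}{NoInvert}, so it comes for free.

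Next, since $G$ is nonabelian and $S$ generates it, I can choose $a,b \in S$ that do not commute; then $c := [a,b^{-1}]$ is a nonzero element of $\integer_p$, hence a generator of~$G'$ (and likewise $[a^{-1},b^{-1}]$ generates~$G'$). The goal is now to realize the configuration of \cref{KW43} (equivalently, of \cref{StandardAlteration}) with this pair: a hamiltonian cycle $C_0$ in $\Cay(\quot G;S)$ that contains both the edge $[g](b)$ and an $a$-run carrying a second $b$-edge. Given such a~$C_0$, \cref{KW43} with $s_1 = b$ and $k = 1$ produces a second cycle $C_1$, sharing an oriented edge with~$C_0$, whose voltage satisfies $\voltage C_1 = (\voltage C_0)\,[a^{-1},b^{-1}]\,[a^{-1},b^{-1}]^{a^{-1}}$; by \cref{StandardAlterationIsG'} together with the odd-order observation above, $(\voltage C_0)^{-1}(\voltage C_1)$ generates $\langle[a^{-1},b^{-1}]\rangle = G'$, as required.

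The main obstacle is therefore not the voltage bookkeeping but the construction of the base cycle $C_0$ with the non-commuting generator~$b$ sitting in the correct local position. To feed \cref{KW43} I need some $a \in S$ with $\langle\quot a\rangle \neq \quot G$ together with a hamiltonian cycle $(s_i)_{i=1}^d$ of $\Cay(\quot G/\langle\quot a\rangle;S)$ whose first step $s_1 = b$ does not commute with~$a$; this forces me to argue that a non-commuting pair can always be arranged with $\quot b \notin \langle\quot a\rangle$ so that $b$ may legitimately begin such a cycle. Two structural situations resist this directly and must be handled separately: the case in which $\quot G$ is cyclic (where no generator satisfies $\langle\quot a\rangle \neq \quot G$, and the two cycles must instead be written down explicitly in the cyclic Cayley graph), and the exceptional group of order~$27$ and exponent~$3$ (where every element has order~$3$ and the $a$-run construction degenerates), which is exactly why it is excluded. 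Carrying out this case analysis — selecting $a$ and~$b$ and exhibiting the base cycle in the abelian graph $\Cay(\quot G;S)$ — is the substantive part of the argument and is what occupies the two sections of \cite{KeatingWitte} cited in the statement.
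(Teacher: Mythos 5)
Your overall route coincides with the paper's first step: reduce to $|G'| = p$ via \cref{FreeLunch} (and your observation that this reduction cannot manufacture the excluded group, via \cref{27grp}, is correct and is implicitly needed in the paper too), then defer the substantive construction to \cite{KeatingWitte}. The gap is in your accounting of the exceptional cases. You claim the run-based construction degenerates only for the cyclic-quotient case and for the excluded nonabelian group of order~$27$ and exponent~$3$. But the degeneration is governed by $\quot{G}$, not by~$G$: your use of \cref{KW43} with $k=1$ requires some $a \in S$ with $k \le |\quot{a}|-3$, i.e.\ $|\quot{a}| \ge 4$, hence $|\quot{a}| \ge 5$ by oddness, and this fails for \emph{every} group with $\quot{G} \iso \integer_3 \times \integer_3$, whether or not $G$ itself has exponent~$3$. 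When in addition $p = 3$, the cited sections of \cite{KeatingWitte} do not cover the case either --- this is exactly the carve-out the paper makes (``the desired conclusion is implicit in \cite{KeatingWitte} unless $|G/G'| \iso \integer_3 \times \integer_3$ and $p = 3$''). By \fullcref{27grp}{mu=1} such a group has $|G| = 27$, and besides the excluded exponent-$3$ group there is a second nonabelian group of order~$27$, the one of exponent~$9$. It satisfies every hypothesis of the theorem, it is not covered by \cite{KeatingWitte}, and your construction provides no cycles for it.

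The paper's proof devotes its entire second half to precisely this group: since the exponent is greater than~$3$, \fullcref{27grp}{subgrp} yields $b \in S$ with $|b| \ge 9$; taking $S = \{a,b\}$ minimal, the explicit cycles $C_1 = (a^2,b)^3$ and $C_2 = (a^2,b^{-1})^3$ in $\Cay(\quot{G};S)$ share the oriented edge $(a)$, and the identity $(xy)^3 = x^3y^3$ of \fullcref{27grp}{a3b3}, together with $x^3 \in G' = Z(G)$, gives $(\voltage C_1)^{-1}(\voltage C_2) = b^{-6} \neq e$, which generates $G' \iso \integer_3$. Some argument of this kind is what is missing from your proposal; without it, the case $|G| = 27$, exponent~$9$ (and more generally the reduction of any $G$ with $G/G' \iso \integer_3 \times \integer_3$ and $p=3$, which \fullcref{27grp}{mu=1} collapses to that single group) is simply unproved.
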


\begin{proof}
\Cref{FreeLunch} allows us to assume $|G'| = p$. Then the desired conclusion is implicit in \cite[\S4 and \S5]{KeatingWitte} unless $|G/G'| \iso \integer_3 \times \integer_3$ and $p = 3$.  

Therefore $G/(G')^3$ is a nonabelian group of order~$27$, so \fullcref{27grp}{mu=1} tells us $|G| = 27$.
By assumption, the exponent of~$G$ is greater than~$3$, so we conclude from \fullcref{27grp}{subgrp} that
$S$ contains an element~$b$ with $|b| \ge 9$. We may assume $S$ is minimal, so $\#S = 2$; write $S = \{a,b\}$. Then we have the following two hamiltonian cycles in $\Cay(\quot{G}; S)$:
	\begin{align*}
	\text{$C_1 = (a^2, b)^3$ \ and \ $C_2 = (a^2, b^{-1})^3$} 
	. \end{align*}
Since \fullcref{27grp}{a3b3} tells us $(xy)^3 = x^3 y^3 $ for all $x,y \in G$, and we have $x^3 \in G' = Z(G)$ for all $x \in G$, we see that
	$$ (\voltage C_1)^{-1} (\voltage C_2)
	= \bigl( (a^2 b)^3 \bigr)^{-1} \bigl( a^2 b^{-1} \bigr)^3
	= \bigl( (a^2)^3 b^3 \bigr)^{-1} \bigl( (a^2)^3 (b^{-1})^3 \bigr)
	= b^{-6}
	\neq e ,$$
since $|b| \ge 9$.
\end{proof}

We will use the following version of this result in \cref{Not27} of \cref{G'NotIn<abc>}.

\begin{cor}[{of proof}] \label{G'=p/Z}
Suppose 
\noprelistbreak
	\begin{itemize}
	\item $|G|$ is odd,
	\item $G' = \integer_{p}$ has prime order,
	\item $Z$ is a subgroup of $Z(G)$,
	\item $S \cap G'Z = \emptyset$, 
	and
	\item $G$ is not nilpotent.
	\end{itemize}
Then there exist hamiltonian cycles $C_1$ and~$C_2$ in $\Cay(G / (G'Z) ; S)$ that  have an oriented edge in common,
such that $\langle (\voltage C_1)^{-1} (\voltage C_2) \rangle = G'$.
\end{cor}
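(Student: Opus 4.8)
The plan is to apply \cref{G'=p} to the central quotient $\widehat G = G/Z$ (legitimate since $Z \le Z(G)$ is normal in~$G$) equipped with the image $\widehat S$ of~$S$, and then to transport the conclusion back through the projection $G \to \widehat G$. The first point to settle is that $G' \cap Z = \{e\}$. Indeed, since $|G'| = p$ is prime, the only other possibility is $G' \subseteq Z \subseteq Z(G)$; but then $[G', G] = \{e\}$, so $G$ would be nilpotent (of class at most~$2$), contrary to hypothesis. Hence the projection restricts to an isomorphism of $G'$ onto $\widehat G' = G'Z/Z$, and in particular $\widehat G'$ is cyclic of prime order~$p$.

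I would then check the remaining hypotheses of \cref{G'=p} for the pair $(\widehat G, \widehat S)$. Oddness of $|\widehat G|$ and the fact that $\widehat S$ generates $\widehat G$ are inherited from~$G$, and $\widehat G' \cong \integer_p$ is cyclic of prime-power order. For the condition $\widehat S \cap \widehat G' = \emptyset$, observe that $sZ \in \widehat G' = G'Z/Z$ holds precisely when $s \in G'Z$; since $S \cap G'Z = \emptyset$, no generator has this property (and the same computation shows each $sZ$ is nontrivial).

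The hypothesis requiring real work is that $\widehat G$ must not be the nonabelian group of order~$27$ and exponent~$3$. I would again invoke non-nilpotency: that group is a $3$-group, hence nilpotent, so if $\widehat G = G/Z$ were isomorphic to it, then $G$ would be a central extension of a nilpotent group by the central subgroup~$Z$. Such an extension is itself nilpotent --- if $\gamma_{c+1}(G) \subseteq Z$ then $\gamma_{c+2}(G) \subseteq [Z, G] = \{e\}$ --- contradicting the assumption that $G$ is not nilpotent. This is the step I expect to be the main obstacle, since it is where the non-nilpotency hypothesis does its essential work; note that the \emph{other} nonabelian group of order~$27$ (exponent~$9$) is permitted by \cref{G'=p} and so need not be excluded.

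With all hypotheses verified, \cref{G'=p} yields hamiltonian cycles $C_1, C_2$ in $\Cay(\widehat G / \widehat G' ; \widehat S)$ that share an oriented edge and satisfy $\langle (\voltage C_1)^{-1}(\voltage C_2) \rangle = \widehat G'$. Since $\widehat G / \widehat G' = (G/Z)\big/(G'Z/Z) = G/(G'Z)$, these are hamiltonian cycles in $\Cay(G/(G'Z); S)$ sharing an oriented edge, as required; and because their voltages lie in $\widehat G'$, which the isomorphism of the first paragraph identifies with~$G'$, the generation statement becomes $\langle (\voltage C_1)^{-1}(\voltage C_2) \rangle = G'$. It is essential here that the voltages be read in $\widehat G$ (so that they land in $\widehat G' = G'Z/Z \cong G'$) rather than computed directly in~$G$, where the difference would only be pinned down in $G'Z$, up to a possible central contribution from~$Z$.
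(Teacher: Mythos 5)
Your reduction to $\widehat G = G/Z$ has a genuine gap, and it is exactly at the step you flagged yourself. In this paper the voltage of a hamiltonian cycle in $\Cay(G/(G'Z);S)$ is, by definition, the product of its edge labels computed in $G$, an element of the full kernel $G'Z$; so the conclusion of \cref{G'=p/Z} asserts in particular that $(\voltage C_1)^{-1}(\voltage C_2)$ \emph{lies in} $G'$ and generates it. Applying \cref{G'=p} to $\widehat G$ controls only the image of this element modulo~$Z$: lifting back to~$G$, the difference is $\gamma z$ with $\gamma$ a generator of~$G'$ but $z \in Z$ completely uncontrolled (note $G'Z = G' \times Z$ by your own first paragraph), and if $z \neq e$ then $\langle \gamma z \rangle \neq G'$. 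Declaring that the voltages should ``be read in $\widehat G$'' is not a repair but a change of statement, and the weaker statement is not what the paper needs: in \cref{Not27} of \cref{G'NotIn<abc>}, the cycles produced by this corollary are spliced into hamiltonian cycles in $\Cay(\quot{G};S)$, and \fullcref{MarusicMethod34}{4} requires the resulting voltage differences to generate \emph{exactly} $\integer_q$. There the relevant $Z$ is $G_p \cap G_q$, which can meet $\integer_p$ nontrivially (when $\integer_p$ is central and lies in~$G_q$), so a difference of the form $\gamma z$ with $z \neq e$ could generate all of $\integer_{pq}$; since condition~(\ref{MarusicMethod34-4}) is two-sided (trivial projection to~$\integer_p$, nontrivial projection to~$\integer_q$), ``generates $\integer_q$ modulo something central'' does not suffice.

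The paper closes this hole with a different mechanism, which is the real content of the corollary: it chooses $a,b \in S$ with $[a,b] \neq e$ and (by non-nilpotency) $a$ not centralizing~$G'$, and then produces the two cycles so that the sequence of edge labels of~$C_2$ is a \emph{permutation} of that of~$C_1$ --- via the construction in the proof of \cite[Case~5.3]{KeatingWitte} when $b \notin \langle a, G', Z \rangle$, and via the explicit pair $C_1 = (b, a^{-(i-1)}, b, a^{n-i-1})$, $C_2 = (b, a^{n-i-1}, b, a^{-(i-1)})$ otherwise. Because the two products use the same labels in a different order, their quotient automatically lies in $[G,G] = G'$ --- this is what kills any possible $Z$-component --- and it is nontrivial because $a$ does not centralize~$G'$, hence a generator since $|G'| = p$. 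Your observations that $G' \cap Z = \{e\}$ and that $G/Z$ cannot be the exponent-$3$ group of order~$27$ (both via non-nilpotency) are correct and would be fine ingredients, but without some device forcing the voltage difference into~$G'$ itself, such as this permutation trick, the route through $G/Z$ cannot yield the statement in the form in which the paper later uses it.
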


\begin{proof}
Choose $a,b \in S$ with $[a,b] \neq e$. Since $G$ is not nilpotent, we may assume $a$ does not centralize~$G'$. Furthermore, since we are using \cref{MarusicMethod}, there is no harm in assuming $S = \{a,b\}$. 

If $b \notin \langle a, G', Z \rangle$, then the proof of \cite[Case~5.3]{KeatingWitte} provides two hamiltonian cycles $C_1 = (s_i)_{i=1}^n$ and $C_2 = (t_i)_{i=1}^n$ in $\Cay \bigl( G/(G'Z) ; a,b\bigr)$, such that $\voltage C_1 \neq \voltage C_2$ (and the two cycles have an oriented edge in common). From the construction, it is clear that $(s_i)_{i=1}^n$ is a permutation of $(t_i)_{i=1}^n$, so $(\voltage C_1)^{-1} (\voltage C_2) \in G'$.

We may now assume $b \in \langle a, G', Z \rangle$. Then, letting $n =  |G : \langle a, G', Z \rangle|$, there is some~$i$, such that $b^i \in a^i G' Z$ and $0 < i < n$. Therefore, we have the following two hamiltonian cycles in  $\Cay \bigl( G/(G'Z) ; S \bigr)$ that both contain the oriented edge~$(b)$:
	\begin{align*}
	 C_1 &= (b, a^{-(i-1)}, b, a^{n-i-1}) , \\
	 C_2 &= (b, a^{n-i-1}, b, a^{-(i-1)}) = [a] C_0
	 . \end{align*}
The sequence of edges in~$C_2$ is a permutation of the sequence of edges in~$C_1$, so  $(\voltage C_1)^{-1} (\voltage C_2) \in G'$. Also, since $a$ does not centralize~$G'$, it is not difficult to see that $(\voltage C_1)^{-1} (\voltage C_2)$ is nontrivial,\refnote{G'=p/ZNontrivial} and therefore generates~$G'$.
\end{proof}

\begin{lem} \label{aG'=bG'}
Assume
\noprelistbreak
	\begin{itemize}
	\item $G' = \integer_{p^\mu} \times \integer_{q^\nu}$, where $p$ and~$q$ are prime,
	\item $S \cap G' = \emptyset$,
	\item there exist $a,b \in S \cup S^{-1}$, with $a \neq b$, such that $aG' = bG'$,
	\item the generating set~$S$ is minimal,
	and
	\item $|G|$ is odd.
	\end{itemize}
Then there is a hamiltonian cycle in $\Cay(G;S)$.
\end{lem}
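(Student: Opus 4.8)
The plan is to apply \cref{MarusicMethod}, building the required cycles from one elementary move that exploits the hypothesis $\quot a = \quot b$, which I will call a \emph{swap}: wherever a hamiltonian cycle of $\Cay(\quot G; S)$ crosses the $\quot a$-direction along an edge labelled $a$ (or $b$), relabelling that single edge $b$ (resp.\ $a$) leaves the vertex sequence unchanged, hence gives another hamiltonian cycle sharing every other oriented edge with the original, whose voltage is multiplied by a conjugate of $c := a^{-1}b \in G' \setminus \{e\}$. Since $p \neq q$, the group $G'$ is cyclic, so \cref{FreeLunch,PQStillMin} let me reduce to $G' = \integer_p \times \integer_q$; minimality is preserved, and it keeps the images of $a$ and $b$ in $G/\Phi$ distinct (otherwise one of $a,b$ would be redundant there), so $c$ remains nontrivial. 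I may also assume $G$ is not nilpotent, as otherwise \cref{GhaderpourMorrisNilpotent} applies to the cyclic commutator subgroup. Writing $P = \integer_p$ and $Q = \integer_q$ for the Sylow factors of $G'$ (both normal in $G$), the cyclic group $\langle c\rangle$ is one of $P$, $Q$, or $G'$, and is normal in $G$ in each case.

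The clean case is $\langle c\rangle = G'$, so that $c$ has nonzero component in each factor. Here I take $S_0 = \{a,b\}$, giving $\langle S_0\rangle \supseteq \langle c\rangle = G'$ and $\langle S_0\rangle/G' = \langle\quot a\rangle \iso \integer_n$ with $n \geq 3$ odd; the $n$-gon $C_0 = (a,a,\dots,a)$ is a hamiltonian cycle there, and all its edges run in the $\quot a$-direction. Swapping one edge produces $C_1$ with $(\voltage C_0)^{-1}(\voltage C_1) = c^{a^i}$, and swapping a second edge as well produces $C_2$ with $(\voltage C_0)^{-1}(\voltage C_2) = c^{a^i} c^{a^j}$ (the changes add, as $G'$ is abelian), where $i \neq j$. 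The three pairwise voltage-differences are then $c^{a^i}$, $c^{a^j}$, and $c^{a^i} c^{a^j}$; the first two generate $G'$ because $c$ does, and the $P$- and $Q$-components of the third, namely $(\alpha^i + \alpha^j)c_P$ and $(\beta^i + \beta^j)c_Q$ (with $\alpha,\beta$ the multipliers of conjugation-by-$a$ on $P$ and $Q$), are nonzero because $\alpha^{i-j} = -1$ and $\beta^{i-j} = -1$ are impossible for the \emph{odd}-order automorphisms $\alpha,\beta$. Since the three cycles also share all unswapped edges, the three-cycle criterion of \cref{MarusicMethod34} completes this case.

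If instead $\langle c\rangle$ is a single factor, say $\langle c\rangle = Q$, then swaps only ever contribute elements of $Q$, and the $P$-factor must be supplied separately. The quotient $G/Q$ has commutator subgroup $G'/Q \iso \integer_p$ of prime order, and $S$ meets neither $G'$ nor $(G/Q)'$; so, provided $G/Q$ is not the excluded nonabelian group of order $27$ and exponent $3$, \cref{G'=p} yields hamiltonian cycles $C_1, C_2$ in $\Cay(\quot G; S)$ sharing an oriented edge $E$ whose voltages differ by a generator of $P$, so that $\langle(\voltage C_1)^{-1}(\voltage C_2)\rangle \supseteq \integer_p$. Taking $C_3$ and $C_4$ to be swaps of $C_1$ and $C_2$ at $\quot a$-edges other than $E$ makes $(\voltage C_1)^{-1}(\voltage C_3)$ and $(\voltage C_2)^{-1}(\voltage C_4)$ nontrivial elements of $Q$—hence generators of $\integer_q$—while all four cycles retain $E$. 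This is exactly the four-cycle criterion of \cref{MarusicMethod34}.

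The step I expect to be the main obstacle is the very last one: forming $C_3, C_4$ by swaps requires $C_1$ and $C_2$ to cross the $\quot a$-direction along an $s$- or $t$-edge other than their shared edge $E$, which the statement of \cref{G'=p} does not by itself provide. I would secure this either by connectivity—if deleting from $S$ the generators lying in the $\quot a$-direction disconnects $\Cay(\quot G; S)$, then every hamiltonian cycle must use one of them—or, when that fails, by producing the $P$-variation as a second \cref{StandardAlteration} performed on a common base cycle that is built to contain an $\quot a$-edge, so that $C_1, C_2, C_3, C_4$ become modifications of a single cycle in disjoint regions and a common edge is automatic. In the latter route, that each alteration generates its intended Sylow factor is guaranteed by \cref{StandardAlterationIsG'} together with the non-nilpotency of $G$, which supplies a generator not centralizing that factor. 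Making this coordination uniform, and separately dispatching the exceptional order-$27$ quotient, is where the real work lies; the voltage identities themselves are routine given that $G'$ is abelian.
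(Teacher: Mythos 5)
Your plan is the same as the paper's in its skeleton: the paper also splits on whether $\langle \gamma \rangle = G'$ or $\langle \gamma \rangle$ is a single Sylow factor (where $b = a\gamma$), handles the first case exactly by your swap argument (a base cycle with two $a$-edges, swap one and then both, apply \fullcref{MarusicMethod34}{3}, using oddness of $|G|$ to rule out inversion), and in the second case likewise applies \cref{G'=p} to $G/\integer_q$ with generating set $S \smallsetminus \{b\}$ and converts an $a$-edge of each resulting cycle into a $b$-edge to feed \fullcref{MarusicMethod34}{4}. But the two points you defer at the end are genuine gaps, and they are precisely where the paper's proof has its content.

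First, the swappable edge. You need $C_1$ and $C_2$ to contain an edge in the $\quot a$-direction other than the shared edge $E$, and you are right that \cref{G'=p} does not provide this. The paper closes this with minimality of $S$: every hamiltonian cycle in $\Cay\bigl(\quot G; S \smallsetminus \{b\}\bigr)$ must contain \emph{more than one} edge labelled $a^{\pm1}$, because a cycle with at most one such edge would force $\quot G = \bigl\langle\, \quot s : s \in S \smallsetminus \{a,b\} \,\bigr\rangle$ (for exactly one $a$-edge, use that the labels of a closed walk multiply to the identity in the abelian group $\quot G$), and this is incompatible with the minimality of~$S$. Your ``connectivity'' remedy is this very statement; the trouble is that you treat it as something that may fail, to be backstopped by rebuilding everything as two applications of \cref{StandardAlteration} on a common base cycle --- a construction you never carry out. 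In fact the connectivity statement never fails under the lemma's hypotheses, but proving that is not automatic (generation modulo $G'$ does not in general imply generation), and that argument is the missing substance.

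Second, the exceptional quotient. When $p = 3$ and $G/\integer_{q}$ is the nonabelian group of order $27$ and exponent $3$, \cref{G'=p} is inapplicable, and you offer no argument at all (``where the real work lies''). The paper handles it by a different mechanism: minimality of $S$ shows, \emph{before} any Frattini reduction, that $\langle \gamma \rangle \supseteq \integer_{q^\nu}$, so $G/\langle\gamma\rangle$ is a $3$-group; then \cref{pgrp} supplies a directed hamiltonian cycle in $\overrightarrow{\Cay}\bigl(G/\langle\gamma\rangle; S \smallsetminus \{b\}\bigr)$, which by minimality has at least two $a$-edges, and the swap argument of your first case runs with the cyclic normal subgroup $\langle\gamma\rangle$ in place of $G'$, finishing with \cref{FGL}. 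Note this is also why the paper pins down $\langle\gamma\rangle$ in the unreduced group rather than reducing first as you do: your reduce-first setup only knows the image of $\gamma$ generates $\integer_q$, which is not enough to make $G/\langle\gamma\rangle$ a $3$-group, so even the statement of the needed exceptional-case argument is unavailable in your framework without reworking the reduction.
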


\begin{proof}
Write $b = a \gamma$, with $\gamma \in G'$. 

\setcounter{case}{0}

\begin{case} \label{aG'=bG'-G'Case}
Assume $\langle \gamma \rangle = G'$.
\end{case}
We apply \cref{MarusicMethod}, so \cref{FreeLunch} allows us to assume $G' = \integer_p \times \integer_q$.
Since $| \quot{a} | \ge 3$, it is easy to find an oriented hamiltonian cycle~$C_0$ in $\Cay \bigl( \quot{G} ; S \bigr)$ that has (at least)~$2$ oriented edges~$\alpha_1$ and~$\alpha_2$ that are labeled~$a$. We construct two more hamiltonian cycles $C_1$ and~$C_2$ by replacing one or both of $\alpha_1$ and~$\alpha_2$ with a $b$-edge. (Replace one $a$-edge to obtain $C_1$; replace both to obtain~$C_2$.) Then there are conjugates $\gamma_1$ and~$\gamma_2$ of~$\gamma$, such that\refnotelower{aG'=bG'Voltage} 
	\begin{align*}
	 (\voltage C_0)^{-1}(\voltage C_1)  =  \gamma_1 , 
	 \quad
	 (\voltage C_1)^{-1}(\voltage C_2)  =  \gamma_2 , 
	 \quad
	 (\voltage C_0)^{-1}(\voltage C_2)  =  \gamma_1 \gamma_2 
	. \end{align*}
By the assumption of this \namecref{aG'=bG'-G'Case}, we know that $\gamma_1$ and~$\gamma_2$ generate~$G'$. Also, since $|G|$ is odd, we know that no element of~$G$ inverts any nontrivial element of~$G'$, so $\gamma_1 \gamma_2$ also generates~$G'$. Therefore, \fullcref{MarusicMethod34}{3} applies.

\begin{case} \label{aG'=bG'Pf-noteq}
Assume $\langle \gamma \rangle \neq G'$.
\end{case}
Since $S$ is minimal, we know $\langle \gamma \rangle$ contains either $\integer_{p^\mu}$ or $\integer_{q^\nu}$. By the assumption of this \namecref{aG'=bG'Pf-noteq}, we know it does not contain both. So let us assume $\langle \gamma \rangle = N \times \integer_{q^\nu}$, where $N$ is a proper subgroup of~$\integer_{p^\mu}$. 

Assume, for the moment, that $G/(G')^p$ is not the nonabelian group of order~$27$ and exponent~$3$. We use \cref{MarusicMethod}, so \cref{FreeLunch} allows us to assume $G' = \integer_p \times \integer_q$. Applying \cref{G'=p} to $G/\integer_q$ provides us with hamiltonian cycles $C_1$ and~$C_2$ in $\Cay \bigl( G/G' ; S \smallsetminus \{b\} \bigr)$, such that $\bigl\langle (\voltage C_1)^{-1} (\voltage C_2) \bigr\rangle$ contains $\integer_p$. (Furthermore, the two cycles have an oriented edge in common.) Since $S$ is a minimal generating set, we know that $C_i$ contains an edge labelled $a^{\pm1}$. (In fact, more than one, so we can take one that is not the edge in common with the other cycle.) Assume, without loss of generality, that it is labelled~$a$. Replacing this edge with~$b$ results in a hamiltonian cycle~$C_i'$, such that $\bigl\langle (\voltage C_i)^{-1}(\voltage C_i') \bigr\rangle = \langle \gamma \rangle = \integer_q$. Then \fullcref{MarusicMethod34}{4} applies.

We may now assume that $G/(G')^p$ is the nonabelian group of order~$27$ and exponent~$3$. Then $G/\langle \gamma \rangle$ is a $3$-group, so \cref{pgrp} tells us there is a directed hamiltonian cycle $C_0$ in the Cayley digraph $\overrightarrow{\Cay}\bigl( G/ \langle \gamma \rangle ; S \smallsetminus \{b\} \bigr)$. Since $S\smallsetminus \{b\} $ is a minimal generating set of $G/ \langle \gamma \rangle$, there must be at least two edges~$\alpha_1$ and~$\alpha_2$ that are labeled~$a$ in~$C$. Now the proof of \cref{aG'=bG'-G'Case} applies (but with $\langle \gamma \rangle$ in the place of~$G'$).
\end{proof}

\section{\texorpdfstring{Proof of \cref{MAINTHM}}{Proof of the main theorem}}

\begin{assump} \label{MainThmPfAssumps} 
We always assume:
\noprelistbreak
	\begin{enumerate}
	\item The generating set~$S$ is minimal.
	\item $S \cap G' = \emptyset$ \csee{GenInG'}.
	\item $p$ and~$q$ are distinct \csee{KeatingWitteThm}.
	\item \label{MainThmPfAssumps-NotNilp} 
	$G$ is not nilpotent \csee{GhaderpourMorrisNilpotent}.
	This implies $G/(G')^{pq}$ is not nilpotent \cite[Satz 3.5, p.~270]{Huppert}.
	\item  \label{MainThmPfAssumps-aG'=bG'}
	There do not exist $a,b \in S \cup S^{-1}$ with $a \neq b$ and $aG' = bG'$ \csee{aG'=bG'}.
	\item \label{MainThmPfAssumps-notnormal}
	There does not exist $s \in S$, such that $G' \subseteq \langle s \rangle$ \csee{GenNormal}.
	\end{enumerate}
\end{assump}

\begin{rem}
We consider several cases that are exhaustive\refnote{exhaustive}
 up to permutations of the variables $a$, $b$, and~$c$, and interchanging $p$ and~$q$. Here is an outline of the cases: 
	\begin{itemize} \itemindent = 0.25in 
	\item There exist $a,b \in S$, such that $\langle [a,b] \rangle = G'$.
	\noprelistbreak
		\begin{itemize} \addtolength{\itemindent}{0.5in} 
		\item[\pref{bina}] $\quot{b} \in \langle \quot{a} \rangle$.
		\item[\pref{bnotina}] $\quot{b} \notin \langle \quot{a} \rangle$ and $|\quot{a}| \ge 5$. 
		\item[\pref{ab=G'}] $|\quot{a}| = |\quot{b}| = 3$ and $\langle \quot{a} \rangle \neq \langle \quot{b} \rangle$.
		\end{itemize}
	\item There exist $a,b,c \in S$, such that $\integer_{p^\mu} \subseteq \langle [a,b] \rangle$ and $\integer_{q^\nu} \subseteq \langle [a,c] \rangle$.
	\noprelistbreak
		\begin{itemize}  \addtolength{\itemindent}{0.5in} 
		\item[\pref{bandcina}] $\quot{b}, \quot{c} \in \langle \quot{a} \rangle$.
		\item[\pref{a<ab<abc}] $\langle \quot{a} \rangle \subsetneq \langle \quot{a}, \quot{b} \rangle \subsetneq \langle \quot{a}, \quot{b}, \quot{c} \rangle$.
		\item[\pref{aCentG'}] $a$ centralizes~$G'/(G')^{pq}$.
		\item[\pref{bcNotIna}] $\quot{b}, \quot{c} \notin \langle \quot{a} \rangle$.
		\item[\pref{abcRemainder}] $\quot{c} \in \langle \quot{a} \rangle$ and $\quot{b} \notin \langle \quot{a} \rangle$.
		\end{itemize}
	\item There do not exist $a,b,c \in S$, such that $\langle [a, b], [a,c] \rangle = G'$.
		 \pref{G'NotIn<abc>} 
	\end{itemize}
\end{rem}

\begin{CASE} \label{bina}
Assume there exist $a,b \in S$, such that $\langle [a,b] \rangle = G'$ and $\quot{b} \in \langle \quot{a}\rangle$. 
\end{CASE}

\begin{proof}
We use \cref{MarusicMethod34}, so there is no harm in assuming $S = \{a,b\}$. Then $\langle \quot{a}\rangle = \langle \quot{a}, \quot{b} \rangle = \quot{G}$. Furthermore, \cref{FreeLunch} allows us to assume $G' = \integer_{pq}$. Let $n = | \quot{a}| = |\quot{G}|$, fix $k$ with $\quot{b} = \quot{a}^k$, and choose $\gamma \in G'$, such that $b = a^k \gamma$. Note that 
	\begin{itemize}
	\item $a^n = e$ (since \cref{aCents->proper} implies that $a$ cannot centralize a nontrivial subgroup of~$G'$),
	and
	\item $\langle \gamma \rangle = G'$ (since $\langle a \rangle \ltimes \langle \gamma \rangle = \langle a,b \rangle = G$). 
	\end{itemize}
We may assume $1 \le k < n/2$, by replacing $b$ with its inverse if necessary. We may also assume $n \ge 5$ (otherwise, we must have $k = 1$, contrary to \fullcref{MainThmPfAssumps}{aG'=bG'}). Therefore $n -k-2 > 0$.

We have the following three hamiltonian cycles in $\Cay(\quot{G}; a,b)$:
$$ C_1 = (a^n),
\qquad C_2 = (a^{n-k-1}, b, a^{-(k-1)}, b),
\qquad C_3 = (a^{n-k-2}, b, a^{-(k-1)}, b, a) .$$
Their voltages are
	\begin{align*}
	\voltage C_1 
		&= a^n = e 
		, \\
	\voltage C_2
		&= a^{n-k-1}  b a^{-(k-1)} b
		= a^{n-k-1}   (a^k \gamma) a^{-(k-1)}  (a^k \gamma)
		= a^n \cdot a^{-1} \gamma a \gamma
		= \gamma^a \gamma 
		, \\
	\voltage C_3
		&= a^{n-k-2} b a^{-(k-1)} b a
		= a^{-1}(a^{n-k-1} b a^{-(k-1)} b ) a
		= (\voltage C_2)^ a
		.
	\end{align*}
Since $|G|$ is odd, we know that $a$ does not invert $\integer_p$ or~$\integer_q$. Therefore $\voltage C_2$ generates~$G'$. Hence, the conjugate $\voltage C_3$ must also generate~$G'$. Furthermore, as was mentioned above, we know that $a$ does not centralize any nontrivial element of~$G'$, so $(\voltage C_2) (\voltage C_3)^{-1}$ also generates~$G'$. (Also note that all three hamiltonian cycles contain the oriented edge $(a)$.) Hence, \fullcref{MarusicMethod34}{3} applies. 
\end{proof}

\begin{CASE} \label{bnotina}
Assume there exist $a,b \in S$, such that $\langle [a,b] \rangle = G'$ and $\quot{b} \notin \langle \quot{a} \rangle$. Also assume $|\quot{a}| \ge 5$. 
\end{CASE}

\begin{proof}[Proof\/ {\upshape(cf.\ proof of {\cite[Case 4.3]{KeatingWitte}})}]
We use \cref{MarusicMethod34}, so there is no harm in assuming $S = \{a,b\}$. Furthermore, \cref{FreeLunch} allows us to assume $G' = \integer_{pq}$. Let $d = |\quot{G}/ \langle \quot{a} \rangle|$, so there is some $r$ with $\quot{b}^d \quot{a}^r = \quot{e}$ and $0 \le r < |\quot{a}|$. We may assume $r \le |\quot{a}| - 2$, by replacing $b$ with its inverse if necessary.

Applying \cref{KW43} to the hamiltonian cycle $(b^{-d})$ yields hamiltonian cycles $C_0$, $C_1$, and~$C_2$ (since $2 = 5 - 3 \le |\quot{a}| - 3$). Note that all of these contain the oriented edge~$\quot{b}(b^{-1})$. Furthermore, the voltage of~$C_k$ is
	$$\voltage C_k = \pi [a^{-k}, b] \, [a^{-k}, b]^{a^{-1}} ,$$
where $\pi = \voltage C_0$ is independent of~$k$. 

Since $[a^{-1}, b]$ generates~$G'$, and $a$ does not invert any nontrivial element of~$G'$ (recall that $|G|$ is odd), it is easy to see that $G'$ is generated by the difference of any two of 
	$$ \text{$e$, \ $[a^{-1}, b]$, \ and \ 
	$[a^{-2}, b] = [a^{-1},b] [a^{-1},b]^{a^{-1}}$. } $$
Using again the fact that $a$ does not invert any element of~$G'$, this implies that $G'$ is generated by the difference of any two of the three voltages,
so \fullcref{MarusicMethod34}{3} applies.
\end{proof}

\begin{CASE} \label{ab=G'}
Assume there exist $a,b \in S$, such that $\langle [a,b] \rangle = G'$, $|\quot{a}| = |\quot{b}| = 3$ and $\langle \quot{a} \rangle \neq \langle \quot{b} \rangle$.
\end{CASE}

\begin{proof}
This proof is rather lengthy. It can be found in \cref{a=b=3}.
\end{proof}

\begin{assump}
Henceforth, we assume there do not exist $a,b \in S \cup S^{-1}$, such that $\langle [a,b] \rangle = G'$.
\end{assump}

\begin{CASE} \label{bandcina}
Assume $\integer_{p^\mu} \subseteq \langle [a,b] \rangle$, 
$\integer_{q^\nu} \subseteq \langle [a,c] \rangle$, 
and
$\langle \quot{b}, \quot{c} \rangle \subseteq \langle \quot{a} \rangle$.
\end{CASE}

\begin{proof}
We use \cref{MarusicMethod34}, so there is no harm in assuming $S = \{a,b,c\}$. (Furthermore, \cref{FreeLunch} allows us to assume $G' = \integer_{pq}$, so $\langle [a,b] \rangle = \integer_p$ and $\langle [a,c] \rangle = \integer_q$.)
Then, since $\quot{b}, \quot{c} \in \langle \quot{a} \rangle$, we must have $\langle \quot{a} \rangle = \quot{G}$. Therefore, \cref{aCents->proper} tells us that $a$ does not centralize any nonidentity element of~$G'$.
Fix $k$ and~$\ell$ with $\quot{b} = \quot{a}^k$ and $\quot{c} = \quot{a}^\ell$. We may write $b = a^k \gamma_1$ and $c = a^\ell \gamma_2$, for some $\gamma_1 \in \integer_p$ and $\gamma_2 \in \gamma_q$.\refnote{b=agamma}

Since $1$, $k$, and~$\ell$ are distinct \fullcsee{MainThmPfAssumps}{aG'=bG'}, we may assume $1 < k < \ell < n/2$, by interchanging $b$ and~$c$ and/or replacing $b$ and/or~$c$ with its inverse if necessary. Therefore $\ell \ge 3$ and $k + \ell \le n - 2$, so we have the following three hamiltonian cycles in $\Cay( \quot{G}; a,b,c)$:
	\begin{align*}
	C_1 &= (a^{-n}) \\
	C_2 &= (a^{-(\ell-1)}, c, \ b, a^{-(k-1)}, b, \ a^{n-k-\ell - 2}, \ c ) \\
	C_3 &= (a^{-(\ell-2)}, c, \ b, a^{-(k-1)}, b, \ a^{n-k-\ell - 2}, \ c , a^{-1})
	. \end{align*}
Note that each of these contains the oriented edge $(a^{-1})$.

Since $a$ does not centralize any nonidentity element of~$G'$, we know $\voltage C_1 = e$. A straightforward calculation\refnote{Calculate:bandcina} shows
	$$ \voltage C_2 = (\gamma_1 \gamma_1^{a^{-1}})^{a^{-k-1}}   ( \gamma_2^{a^{-1}} \gamma_2) ,$$
which generates~$G'$.\refnote{GensG':bandcina} Therefore, 
	$\voltage C_3 = (\voltage C_2)^{a^{-1}}$ and $(\voltage C_2)^{-1} (\voltage C_3)$ also generate~$G'$. (For the latter, note that $a^{-1}$ does not centralize any nonidentity element of~$G'$.) Therefore \fullcref{MarusicMethod34}{3} applies.
\end{proof}

\begin{CASE} \label{a<ab<abc}
Assume $\integer_{p^\mu} \subseteq \langle [a,b] \rangle$, 
$\integer_{q^\nu} \subseteq \langle [a,c] \rangle$, 
and
there exists $s \in \{a,b\}$, such that 
$\langle \quot{a} \rangle \subsetneq \langle \quot{a}, \quot{s} \rangle \subsetneq  \langle \quot{a}, \quot{b} , \quot{c} \rangle$.
\end{CASE}

\begin{proof} 
We use \cref{MarusicMethod34}, so there is no harm in assuming $S = \{a,b,c\}$. Furthermore, \cref{FreeLunch} allows us to assume $G' = \integer_{pq}$, so $\langle [a,b] \rangle = \integer_p$ and $\langle [a,c] \rangle = \integer_q$. 
Choose $A,B,C \ge 3$, such that $\quot{a}^A = \quot{e}$, and every element of~$\quot{G}$ can be written uniquely in the form 
	$$ \text{$\quot{a}^x \quot{b}^y \quot{c}^z$ \ with \ $ \begin{matrix}
	\text{$0 \le x < A$}, \\ \text{$0 \le y < B$}, \\ \text{$0 \le z < C$}
	.
	\end{matrix}$} $$
More precisely, we may let 
	$$\begin{cases}
	\text{
	$A = |\quot{a}|$, 
	$B = |\langle \quot{a}, \quot{b} \rangle: \langle \quot{a} \rangle|$,  
	$C = |\quot{G} : \langle \quot{a}, \quot{b} \rangle|$
	}
	& \text{if $s = b$} ,\\
	\text{
	$A = |\quot{a}|$, 
	$C = |\langle \quot{a}, \quot{c} \rangle: \langle \quot{a} \rangle|$,  
	$B = |\quot{G} : \langle \quot{a}, \quot{c} \rangle|$
	}
	& \text{if $s = c$} 
	. \end{cases}$$
Then we have the following hamiltonian cycle~$X$ in $\Cay(\quot{G}; a,b,c)$ \csee{3DFig}:
	\begin{align*}
	 X = \biggl( a, 
	 	&
		\Bigl( a^{A-2}, (b, a^{-(A-1)}, b, a^{A-1})^{(B-1)/2} , c, 
		\\[-10pt] &\hskip 0.5in
		(a^{-(A-1)}, b^{-1}, a^{A-1}, b^{-1})^{(B-1)/2}, a^{-(A-2)}, c \Bigr)^{(C-1)/2},
		\\&b, a^{-1}, b^{B-2}, a,
		(a^{A-2}, b^{-1}, a^{-(A-2)}, b^{-1})^{(B-3)/2},
		\\ & a^{A-2}, b^{-1}, a^{-(A-3)}, b^{-1}, a^{A-2}, c^{-(C-1)} \biggr)
	. \end{align*} 
\begin{figure}[t]
\begin{center}
\includegraphics{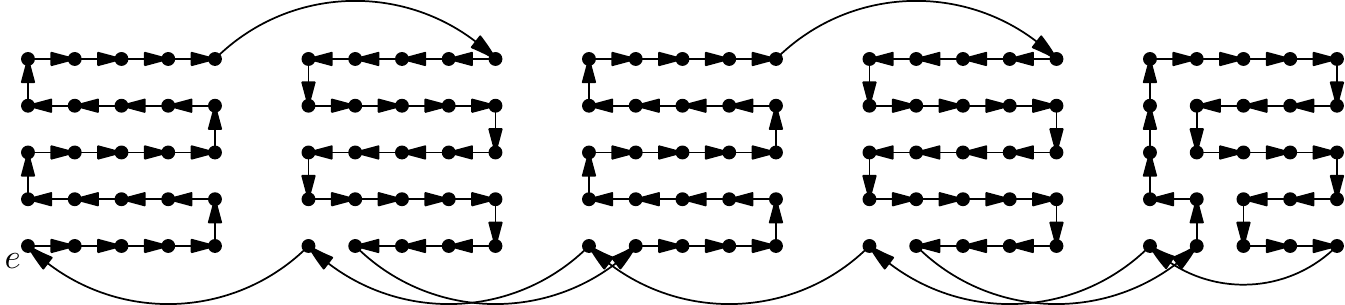}
\caption{A hamiltonian cycle~$X$.}
\label{3DFig}
\end{center}
\end{figure}

We obtain a new hamiltonian cycle $X^p$ by replacing a subpath of the form $[g]\bigl(a^{A-1}, b, a^{-(A-1)}\bigr)$ with $[g]\bigl(a^{-(A-1)}, b, a^{A-1}\bigr)$. Then $(\voltage X)^{-1} (\voltage X^p)$ is a conjugate of\refnotelower{voltXXp}
	\begin{align*}
	\bigl( a^{A-1} b a^{-(A-1)} \bigr)^{-1} \bigl( a^{-(A-1)} b a^{A-1} \bigr) 
	= [b,a^{A-1}]^{a} [b,a^{A-1}] 
	. \end{align*}
Similarly, replacing a subpath of the form $[g] \bigl( a^{A-1}, c, a^{-(A-1)} \bigr)$ with $[g] \bigl( a^{-(A-1)}, c, a^{A-1} \bigr)$ results in a hamiltonian cycle~$X_q$, such that $(\voltage X)^{-1} (\voltage X_q)$ is a conjugate of $[c,a^{A-1}]^a [c,a^{A-1}] $. Furthermore, doing both replacements results in a hamiltonian cycle~$X^p_q$, such that $(\voltage X^p)^{-1} (\voltage X^p_q)$ is also a conjugate of $[c,a^{A-1}]^a [c,a^{A-1}] $. Note that all four of these hamiltonian cycles contain the oriented edge $c(c^{-1})$.

Since $G' \not\subseteq \langle a \rangle$ \fullcsee{MainThmPfAssumps}{notnormal}, we may assume $a^A \in \integer_p$ (by interchanging $p$ and~$q$ if necessary). Since $[c,a] \in \integer_q$, this implies that $c$ centralizes~$a^A$, so $[c,a^{A-1}] = [c,a^{-1}]$ generates~$\integer_q$. Since $a$ does not invert any nontrivial element of~$\integer$ (recall that $G$ has odd order), this implies that $[c,a^{A-1}]^a [c,a^{A-1}]$ generates~$\integer_q$. 

Assume, for the moment, that $[b,a^{A-1}]$ generates~$\integer_p$. Since $a$ does not invert any nontrivial element of~$\integer_p$, this implies that $[b,a^{A-1}]^a [b,a^{A-1}]$ generates~$\integer_p$. 
Therefore, \fullcref{MarusicMethod34}{4} applies.

We may now assume $[b,a^{A-1}]$ does not generate~$\integer_p$. This means $[b,a^{A-1}] = e$. Since $[b, a^{-1}] \neq e$, we conclude that $[b, a^A] \neq e$, so 
	$$ \text{$b$ does not centralize~$\integer_p$.} $$
We have the following hamiltonian cycle~$Y_1$ in $\Cay(\quot{G}; a,b,c)$ \csee{3DFigY}:
	\begin{align*}
	 Y_1 = \biggl( b, &
	 	\Bigl( b^{B-3}, (a, b^{-(B-2)}, a, b^{B-2})^{(A-1)/2}, b, a^{-(A-1)}, c,
	 	\\ & \hskip 0.5in a^{A-1}, b^{-1}, (b^{-(B-2)}, a^{-1}, b^{B-2}, a^{-1} )^{(A-1)/2}, b^{-(B-3)}, c \Bigr)^{(C-1)/2},
		\\ & b^{B-2}, a, (a^{A-2}, b^{-1}, a^{-(A-2)}, b^{-1})^{(B-1)/2}, a^{A-1}, c^{-(C-1)} \biggr)
	. \end{align*} 
\begin{figure}[t]
\begin{center}
\includegraphics[scale=0.95]{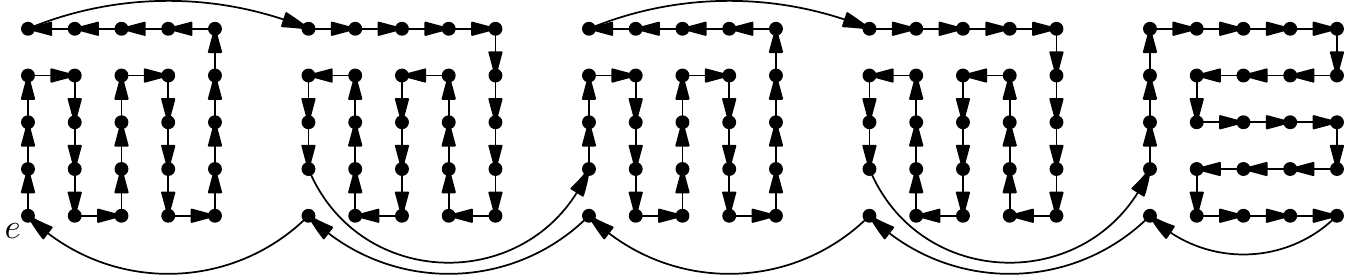}
\caption{A hamiltonian cycle~$Y_1$.}
\label{3DFigY}
\end{center}
\end{figure}

We create a new hamiltonian cycle~$Y_2$ by replacing a subpath of the form $[g]\bigl( a^{-(A-1)}, c, a^{A-1} \bigr)$ with $[g]\bigl( a^{A-1}, c, a^{-(A-1)} \bigr)$. This is the same as the construction of~$X_q$ from~$X$, but with $a$ and~$a^{-1}$ interchanged, so the same calculation shows $(\voltage Y_1)^{-1} (\voltage Y_2)$ is a conjugate of $[c, a^{-(A-1)}]^{a^{-1}} [c, a^{-(A-1)}]$, which generates~$\integer_q$. Furthermore, since $Y_1$ and~$Y_2$ both contain the oriented path $[b^{B-3}](b, a, b^{-1})$, and either the oriented edge $[b^{B-2}](a)$ or the oriented edge $[b^{B-2}a](a^{-1})$, \cref{StandardAlterationIsG'} provides hamiltonian cycles $Y_1'$ and~$Y_2'$, such that $(\voltage Y_i)^{-1}(\voltage Y_i')$ generates~$\integer_p$. Since all four hamiltonian cycles contain the oriented edge $[c](c^{-1})$, \fullcref{MarusicMethod34}{4} applies.
\end{proof}

\begin{CASE} \label{aCentG'}
Assume $\integer_{p^\mu} \subseteq \langle [a,b] \rangle$, 
$\integer_{q^\nu} \subseteq \langle [a,c] \rangle$, 
and
$a$~centralizes $G'/(G')^{pq}$.
\end{CASE}

\begin{proof}
We use \cref{MarusicMethod34}, so there is no harm in assuming $S = \{a,b,c\}$. Furthermore, \cref{FreeLunch} allows us to assume $G' = \integer_{pq}$, so $\langle [a,b] \rangle = \integer_p$ and $\langle [a,c] \rangle = \integer_q$. 

Note that $[a,b^{-1},c] \in \integer_p$, $[c,a^{-1},b] \in \integer_q$, and $[b,c^{-1},a] = e$ (because $a$ centralizes~$G'$). Since $\integer_p \cap \integer_q = \{e\}$, and the Three-Subgroup Lemma \cite[Thm.~2.3, p.~19]{Gorenstein-FinGrps} tells us 
	$$ [a,b^{-1},c]^b [b, c^{-1}, a]^c [c, a^{-1},b]^a = e ,$$
we conclude that $[a,b^{-1},c] = [c,a^{-1},b] = e$,\refnote{3SubgrpLem}
 so
	$$ \text{$c$ centralizes~$\integer_p$ and $b$ centralizes~$\integer_q$} .$$

We know $G' \not\subseteq Z(G)$, because $G$ is not nilpotent \fullcsee{MainThmPfAssumps}{NotNilp}.
Since $a$~centralizes~$G'$, this implies we may assume $c$ does not centralize~$G'$ (by interchanging $b$ and~$c$ if necessary). So $c$ does not centralize~$\integer_q$. Since $a$, $b$, and~$G'$ all centralize~$\integer_q$, this implies $c \notin \langle a,b,G' \rangle$. In other words, $\quot{c} \notin \langle \quot{a}, \quot{b} \rangle$.
Furthermore, applying \cref{aCents->proper} to the group $\langle a, b \rangle$ tells us that $\langle \quot{a} \rangle \neq \langle \quot{a}, \quot{b} \rangle$.
Therefore $\langle \quot{a} \rangle \subsetneq \langle \quot{a}, \quot{b} \rangle \subsetneq \langle \quot{a}, \quot{b}, \quot{c} \rangle$, so \cref{a<ab<abc} applies.
\end{proof}

\begin{CASE} \label{bcNotIna}
Assume $\integer_{p^\mu} \subseteq \langle [a,b] \rangle$, 
$\integer_{q^\nu} \subseteq \langle [a,c] \rangle$, 
and
$\quot{b},\quot{c} \notin \langle \quot{a} \rangle$.
\end{CASE}

\begin{proof}
We use \cref{MarusicMethod34}, so there is no harm in assuming $S = \{a,b,c\}$. Furthermore, \cref{FreeLunch} allows us to assume $G' = \integer_{pq}$, so $\langle [a,b] \rangle = \integer_p$ and $\langle [a,c] \rangle = \integer_q$. 
We may assume $\langle \quot{a}, \quot{b} \rangle = \langle \quot{a}, \quot{c} \rangle = \quot{G}$, for otherwise \cref{a<ab<abc} applies.

Let us begin by showing that $a$ does not centralize any nontrivial element of~$G'$. Suppose not. Then we may assume that $a$ centralizes~$\integer_p$. Let $\underline{G} = G/\integer_q = G/ \langle [a,c] \rangle$. Since $\langle a, c, G' \rangle = G$, we know that $\langle \underline{a}, \underline{c}, \underline{\integer_p} \rangle = \underline{G}$ , so $\underline{a}$ is in the center of~$\underline{G}$. This contradicts the fact that $\langle [\underline{a}, \underline{b}] \rangle = \underline{\integer_p}$ is nontrivial.

Since $\quot{G}$ is abelian (and because $\quot{b},\quot{c} \notin \langle \quot{a} \rangle$), it is easy to choose a hamiltonian cycle $(s_i)_{i=1}^d$ in $\Cay(\quot{G}/\langle \quot{a} \rangle; S)$ that contains both an edge labeled~$b$ (or~$b^{-1}$) and an edge labeled~$c$ (or~$c^{-1}$). Note that
	$$ C_0 = \bigl( (s_i)_{i=1}^{d-1}, a^{|\quot{a}|-1},
		(s_{d - 2i+1}^{-1}, a^{-(|\quot{a}|-2)}, s_{d - 2i}^{-1}, a^{|\quot{a}|-2})_{i=1}^{(d-1)/2},
		a \bigr) $$
is a hamiltonian cycle in $\Cay( \quot{G} ; S)$. 
		
\begin{subcase}
Assume $|\quot{a}| > 3$. 
\end{subcase}
We may assume $s_1 = b^{-1}$ and $s_2 = c^{-1}$. Then $C_0$ contains the four subpaths
	$$ \text{$(b^{-1})$, 
	\ $[b^{-1}a^2](a^{-1}, b, a)$,  \quad 
	$[b^{-1}](c^{-1})$,
	\  $[b^{-1}c^{-1}a^{-2}](a, c, a^{-1})$}
	. $$
Therefore, we may let $g$ be either $b^{-1}$ or $b^{-1}c^{-1}$ in \cref{StandardAlteration}, so \fullcref{StandardAlterationIsG'}{NoCent} tells us we have hamiltonian cycles $C^b$ and $C^c$, such that $(\voltage C_0)^{-1} (\voltage C^b)$ is a generator of~$\integer_p$, and $(\voltage C_0)^{-1} (\voltage C^c)$ is a generator of~$\integer_q$.
Since $|\quot{a}| > 3$, we see that $C^b$, like~$C_0$, contains 
$[b^{-1}](c^{-1})$ and $[b^{-1}c^{-1}a^{-2}](a, c, a^{-1})$, so \fullcref{StandardAlterationIsG'}{NoCent} provides a hamiltonian cycle $C^b_c$, such that $(\voltage C^b)^{-1} (\voltage C^b_c)$ is a generator of~$\integer_q$. Therefore, \fullcref{MarusicMethod34}{4} applies (since each of these four hamiltonian cycles contains the oriented edge $[a^{-1}](a)$).

\begin{subcase}
 Assume $d > 3$. 
\end{subcase}
We may assume $s_1 = b^{-1}$ and $s_3 = c^{-1}$. Then $C_0$ contains the four subpaths
	$$(b^{-1}), \ [b^{-1}a^2](a^{-1}, b, a), \quad
	[s_1s_2](c^{-1}), \ [s_1s_2c^{-1}a^2] (a^{-1}, c, a)
	. $$
Therefore, we may let $g$ be either $b^{-1}$ or $s_1s_2c^{-1}$ in \cref{StandardAlteration}, so \fullcref{StandardAlterationIsG'}{NoCent} tells us we have hamiltonian cycles $C^b$ and $ C^c$, such that $(\voltage C_0)^{-1} (\voltage C^b)$ is a generator of~$\integer_p$, and $(\voltage C_0)^{-1} (\voltage C^c)$ is a generator of~$\integer_q$.
It is clear that $C^b$, like~$C_0$, contains $[s_1s_2](c^{-1})$ and $[s_1s_2c^{-1}a^2](a^{-1}, c, a)$, so \fullcref{StandardAlterationIsG'}{NoCent}  provides a hamiltonian cycle $C^b_c$, such that $(\voltage C^b)^{-1} (\voltage C^b_c)$ is a generator of~$\integer_q$. Therefore, \fullcref{MarusicMethod34}{4} applies (since each of these four hamiltonian cycles contains the oriented edge $[a^{-1}](a)$).

\begin{subcase}
 Assume $|\quot{a}| = 3$ and $d = 3$. 
 \end{subcase}
Since $d = 3$, we may assume $\quot{b} \equiv \quot{c} \pmod{\langle \quot{a} \rangle}$ (by replacing $c$ with its inverse if necessary). 
Let 
	$$C_0 = (b^{-1},c^{-1},a^2, c, a^{-1}, b, a^2) ,$$
so $C_0$ is a hamiltonian cycle in $\Cay(\quot{G}; S)$. 
Then $C_0$ contains the four subpaths 
	$$ (b^{-1}), \ [b^{-1}a^2](a^{-1}, b, a) , \quad 
	[b^{-1}](c^{-1}), \ [b^{-1} c^{-1}a^{-2}](a, c, a^{-1}) .$$
Therefore, we may let $g$ be either $b^{-1}$ or $b^{-1}c^{-1}$ in \cref{StandardAlteration}, so \fullcref{StandardAlterationIsG'}{NoCent} tells us we have hamiltonian cycles 
	$$C^b = (a,b^{-1},a^{-1},c^{-1},a^2, c, b, a)$$
and	
	$$C^c = (b^{-1},a^{-1}, c^{-1},a^2, c, b, a^2),$$
such that $(\voltage C_0)^{-1} (\voltage C^b)$ is a generator of~$\integer_p$, and $(\voltage C_0)^{-1} (\voltage C^c)$ is a generator of~$\integer_q$.
Furthermore, $C^c$ contains the oriented paths $[ab^{-1}](b)$ and $[a^{-1}](a, b^{-1}, a^{-1})$, so, by letting $g = a$ in \cref{StandardAlteration} (and replacing $b$ with~$b^{-1}$), \fullcref{StandardAlterationIsG'}{NoCent} tells us we have a hamiltonian cycle 
	$$C^c_b = (a^2, b^{-1}, c^{-1}, a^2, c, a^{-1}, b ),$$
such that $(\voltage C^c)^{-1} (\voltage C^c_b)$ is a generator of~$\integer_p$. Therefore \fullcref{MarusicMethod34}{4} applies (since all four of these hamiltonian cycles contain the oriented edge $[b^{-1} c^{-1}](a)$).
\end{proof}

\begin{CASE} \label{abcRemainder}
Assume $\integer_{p^\mu} \subseteq \langle [a,b] \rangle$, 
$\integer_{q^\nu} \subseteq \langle [a,c] \rangle$,
 $\quot{c} \in \langle \quot{a} \rangle$,
and $\quot{b} \notin \langle \quot{a} \rangle$.
\end{CASE}

\begin{proof}
We use \cref{MarusicMethod}, so there is no harm in assuming $S = \{a,b,c\}$. Furthermore, \cref{FreeLunch} allows us to assume $G' = \integer_{pq}$, so $\langle [a,b] \rangle = \integer_p$ and $\langle [a,c] \rangle = \integer_q$. 
Also note that, from \fullcref{MainThmPfAssumps}{aG'=bG'}, we know $\quot{c} \notin \{\quot{a}^{\pm1}\}$, so we must have $|\quot{a}| > 3$. 

Let $d = |\quot{G} / \langle \quot{a} \rangle|$.
Since $\quot{c} \in \langle \quot{a} \rangle$, we have $\langle \quot{a}, \quot{b} \rangle = \quot{G}$, so $(b^{d})$ is a hamiltonian cycle in $\Cay( \quot{G} / \langle \quot{a} \rangle ; S )$. 
Choose $r$ such that $a^r b^{d} \in G'$ and $0 \le r \le |\quot{a}|-1$. Assume $r < |\quot{a}|/2$ (so $r \le |\quot{a}| - 3$), by replacing $b$ with its inverse if necessary. Then letting $k =  |\quot{a}|-3$ in \cref{KW43} provides us with a hamiltonian cycle $C^0 = C_{|\quot{a}|-3}$. 

Choose $\ell$ with $\quot{c} = \quot{a}^\ell$, and write $c = a^\ell \gamma$, where $\integer_q \subseteq \langle \gamma \rangle$. We may assume $0 \le \ell < |\quot{a}|/2$ (by replacing $c$ with its inverse, if necessary).
Then $\ell \le |\quot{a}| - 3$, so we see from \cref{KW43Fig} that $C_{|\quot{a}|-3}$ contains the path $[a^\ell b](a^{-(\ell+1)})$. Replacing this with the path $[a^\ell b](c^{-1}, a^{\ell-1}, c^{-1})$ results in a hamiltonian cycle $C^1$, such that $(\voltage C^0)^{-1}(\voltage C^1)$ is a conjugate of
	\begin{align*}
	  c^{-1} a^{\ell-1} c^{-1} \cdot a^{\ell+1} 
	&= (a^\ell \gamma)^{-1} a^{\ell-1} (a^\ell \gamma)^{-1} \cdot a^{\ell+1}
	=  \gamma^{-1} ( \gamma^{-1})^a
	. \end{align*}
Since $|G|$ is odd, we know that $a$ does not invert any nontrivial element of~$G'$, so this is a generator of~$\langle \gamma \rangle$, which contains $\langle [a,c] \rangle = \integer_q$.

Furthermore, from \cref{KW43Fig}, we see that $C_{|\quot{a}|-3}$ contains both the oriented edge $[b^{-1}a^{-1}](b)$ and the oriented path $[b^{-1} a](a^{-1}, b, a)$. Then, by construction, $C^1$ also contains these paths. Therefore, we may apply \cref{StandardAlteration} with $g = b^{-1}a^{-1}$, so \fullcref{StandardAlterationIsG'}{NoInvert} tells us we have hamiltonian cycles $\widehat C^0$ and $\widehat C^1$, such that $(\voltage C^i)^{-1} (\voltage \widehat C^i)$ is a generator of~$\integer_p$. Therefore \fullcref{MarusicMethod34}{4} applies (since there are many oriented edges, such as $[a^{-1}](a^{-1})$, that are in all four hamiltonian cycles).
\end{proof}

\begin{CASE} \label{G'NotIn<abc>}
Assume there do not exist $a,b,c \in S$, such that $\langle [a,b], [a,c] \rangle = G'$.
\end{CASE}

\begin{proof}
Let $\underline{G} = G/(G')^{pq}$, so $\underline{G}' = \integer_{pq}$.
The assumption of this \namecref{G'NotIn<abc>} implies that we may partition $S$ into two nonempty sets $S_p$ and~$S_q$, such that\refnotelower{SpSq}
	\noprelistbreak
	\begin{itemize}
	\item  $\underline{S_p}$ centralizes~$\underline{S_q}$ in~$\underline{G}$,
	and
	\item for $r \in \{p,q\}$,  and $a,b \in S_r$, we have $[\underline{a},\underline{b}] \in \underline{\integer_r}$.
	\end{itemize}
Let $G_p = \langle S_p \rangle$, $G_q = \langle S_q \rangle$, and $Z = \underline{G_p} \cap \underline{G_q} \subseteq Z(\underline{G})$.

Since $\underline{G}$ is not nilpotent \fullcsee{MainThmPfAssumps}{NotNilp}, we know that $\underline{G}' \not\subseteq Z(\underline{G})$. Therefore, we may assume $\integer_q \not\subseteq Z(\underline{G})$ (by interchanging $p$ and~$q$ if necessary). Since $\underline{G_p} \cap \underline{G_q} \subseteq Z(\underline{G})$, this implies $\integer_q \not\subseteq \underline{G_p}$.

\begin{subcase} \label{bbMinModaa}
Assume there exist $a_p,b_p, a_q,b_q \in S$, such that
	 $\langle [\underline{a_p}, \underline{b_p}] \rangle = \underline{\integer_p}$,
	 $\langle [\underline{a_q}, \underline{b_q}] \rangle = \underline{\integer_q}$,
	and 
	 $\{ b_p, b_q \}$ is a minimal generating set of $\langle \quot{a_p}, \quot{b_p}, \quot{a_q}, \quot{b_q} \rangle / \langle \quot{a_p}, \quot{a_q} \rangle$.
\end{subcase}
We use \cref{MarusicMethod} with $S_0 = \{a_p,b_p, a_q,b_q\}$. Assume, for simplicity, that $S = S_0$. \Cref{FreeLunch} allows us to assume $G' = \integer_{pq}$, so $G = \underline{G}$.

After perhaps replacing some generators with their inverses, it is easy to find:
	\begin{itemize}
	\item a hamiltonian cycle $(s_i)_{i=1}^m$ in $\Cay \bigl(  \langle \quot{a_p}, \quot{a_q} \rangle ; a_p, a_q \bigr)$, such that $s_{m-2} = a_p$ and $s_{m-1} = a_q$,
	and
	\item a hamiltonian cycle $(t_j)_{j=1}^n$ in $\Cay \bigl( \quot{G} / \langle \quot{a_p}, \quot{a_q} \rangle ; b_p, b_q \bigr)$, such that $t_1 = b_p$ and $t_3 = b_q$.
	\end{itemize}
We have the following hamiltonian cycle~$C_0$ in $\Cay(G;S)$:\refnotelower{Gaa}
	$$ C_0 = \Bigl( 
		\bigl( (s_i)_{i=1}^{n-2}, t_{2j-1}, (s_{n-1-i}^{-1})_{i-1}^{n-2}, t_{2j} \bigr)_{j=1}^{(m-1)/2},
		(s_i)_{i=1}^{n-1},  (t_{m-j}^{-1})_{j=1}^{m-1}, s_n \Bigr) .$$
Much as in the proof of \cref{StandardAlteration}, we construct a hamiltonian cycle~$C_1$ by
	\begin{itemize}
	\item replacing the oriented edge $[s_m^{-1}b_p](b_p^{-1})$ with the path $[s_m^{-1}b_p](a_q^{-1}, b_p^{-1}, a_q)$,
	and
	\item the oriented path $[s_m^{-1}a_q^{-1}a_p^{-1}](a_p, b_p, a_p^{-1})$ with $[s_m^{-1}a_q^{-1}a_p^{-1}](b_p)$. 
	\end{itemize}
Then there exist $g,h \in G$, such that\refnotelower{GaaVoltage}
	$$(\voltage C_0)^{-1}(\voltage C_1)
	= [b_p^{-1}, a_q]^g \, [a_p^{-1}, b_p]^{h} 
	=   e^g \cdot [a_p^{-1}, b_p]^{h}
	=  [a_p^{-1}, b_p]^{h}
	,$$
which generates~$\integer_p$.

Similarly, we may construct hamiltonian cycles $C_0'$ and~$C_1'$ from $C_0$ and~$C_1$ by
	\begin{itemize}
	\item replacing the oriented edge $[s_m^{-1}t_1t_2 b_q](b_q^{-1})$ with the path $[s_m^{-1}t_1t_2 b_q](a_q^{-1}, b_q^{-1}, a_q)$,
	and
	\item the oriented path $[s_m^{-1}a_q^{-1}a_p^{-1}t_1t_2](a_p, b_q, a_p^{-1})$ with $[s_m^{-1}a_q^{-1}a_p^{-1}t_1t_2](b_q)$. 
	\end{itemize}
Then, for $k \in \{0,1\}$, essentially the same calculation shows there exist $g',h' \in G$, such that 
	$$(\voltage C_k)^{-1}(\voltage C_k')
	= [b_q^{-1}, a_q]^{g'} [a_p^{-1}, b_q]^{h'} 
	= [b_q^{-1}, a_q]^{g'} \cdot e^{h'} 
	= [b_q^{-1}, a_q]^{g'}
	 ,$$
which generates~$\integer_q$.

All four hamiltonian cycles contain the oriented edge $(s_1)$, so \fullcref{MarusicMethod34}{4} applies.

\begin{subcase} \label{Not27}
Assume $\underline{G_p}$ is not the nonabelian group of order~$27$ and exponent~$3$. 
\end{subcase}
We will apply \cref{MarusicMethod34}, so \cref{FreeLunch} allows us to assume $G' = \integer_{pq}$, which means $\underline{G} = G$. 

\begin{claim}
We  may assume $S_q \cap (G' Z) = \emptyset$.
\end{claim}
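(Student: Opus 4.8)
The plan is to delete from~$S_q$ every generator lying in~$G'Z$ and to check that nothing essential is lost. Suppose $s \in S_q \cap (G'Z)$ and write $s = \gamma z$ with $\gamma \in G'$ and $z \in Z \subseteq Z(G)$; since $S \cap G' = \emptyset$ \csee{MainThmPfAssumps}, we have $z \neq e$. Because $G = G_p G_q$ with $G_p$ centralizing~$G_q$ and $Z = G_p \cap G_q \subseteq Z(G)$, we get $G' = G_p' \, G_q'$ with $G_p' \subseteq \integer_p$ and $G_q' \subseteq \integer_q$; as $G' = \integer_{pq}$, this forces $G_q' = \integer_q$, so $G_q$ is nonabelian. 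On the other hand $G'Z$ \emph{is} abelian, being the product of the abelian group~$G'$ with the central subgroup~$Z$. Hence $S_q \not\subseteq G'Z$, and there is always a generator of~$S_q$ lying outside~$G'Z$.

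First I would pin down the structure of such an~$s$. Since $z$ is central, $[s,y] = [\gamma, y]$ for every~$y$, and splitting $\gamma = \gamma_p \gamma_q$ (with $\gamma_p \in \integer_p$, $\gamma_q \in \integer_q$) gives $[s,y] = [\gamma_p, y] \, [\gamma_q, y]$, the two factors lying in $\integer_p$ and~$\integer_q$. For $y \in S_q$ the partition forces $[s,y] \in \integer_q$, so $[\gamma_p, y] = e$; moreover $\gamma_p = s z^{-1} \gamma_q^{-1} \in G_q$, whence $\gamma_p \in \integer_p \cap G_q \subseteq \integer_p \cap Z \subseteq Z(G)$. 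Thus, modulo~$Z(G)$, the element~$s$ lies in~$\integer_q$, and in $\Cay \bigl( G/(G'Z); S \bigr)$ — the quotient in which \cref{G'=p/Z} operates — it maps to the identity. So deleting $s$ from the working set discards only information already contained in~$G'$, and I would set $S_q := S_q \smallsetminus (G'Z)$.

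The main obstacle is to guarantee that this deletion leaves $G_q' = \integer_q$ intact and keeps~$G$ non-nilpotent, both of which \cref{G'=p/Z} requires. The one dangerous configuration is that the $\integer_q$-factor of~$G_q'$ is realized \emph{only} through a commutator $[s,y]$ involving a deleted generator $s \in S_q \cap (G'Z)$; equivalently, the generators of~$S_q$ lying outside~$G'Z$ pairwise commute. I would rule this out by relabelling: in that configuration $s$ and~$y$, together with a pair from~$S_p$ realizing~$\integer_p$, furnish generators $a_q,b_q,a_p,b_p$ with $\langle [a_q,b_q] \rangle = \integer_q$ and $\langle [a_p,b_p] \rangle = \integer_p$, which is exactly the data treated in \cref{bbMinModaa}. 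Hence either that subcase already applies, or its minimality hypothesis fails — in which case the surviving generators all centralize~$\integer_q$ (note that the elements of $S_q \cap (G'Z)$ and all of~$G_p$ centralize $\integer_q = G_q'$ automatically), forcing $\integer_q \subseteq Z(G)$ and contradicting the reduction $\integer_q \not\subseteq Z(\underline G)$ fixed at the start of \cref{G'NotIn<abc>}. Either way the dangerous configuration is excluded, the deletion is safe, and we may assume $S_q \cap (G'Z) = \emptyset$.
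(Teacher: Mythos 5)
Your plan rests on deleting the offending generators, and that is where it breaks down: since $S$ is a \emph{minimal} generating set (\cref{MainThmPfAssumps}), no element of~$S$ may be discarded. Concretely, suppose $s = \gamma_p\gamma_q z \in S_q \cap (G'Z)$ and let $S_0 = S \smallsetminus \{s\}$. Because $z \in Z \subseteq G_p$ and $\gamma_p\gamma_q \in G'$, we have $\langle S_0 \rangle G' = G$; on the other hand minimality gives $\langle S_0 \rangle \neq G$, and since $\integer_p \subseteq G_p \subseteq \langle S_0 \rangle$, this forces $\integer_q \cap \langle S_0 \rangle = \{e\}$. Two fatal consequences follow. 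First, $G' \not\subseteq \langle S_0 \rangle$, so \cref{MarusicMethod} (and the product construction of \cref{Not27}) can never be run on the deleted set. Second, every commutator of elements of $S_q \smallsetminus (G'Z)$ lies in $\integer_q \cap \langle S_0 \rangle = \{e\}$, so your ``dangerous configuration'' is not an exceptional case to be excluded --- it is \emph{forced} whenever $S_q \cap (G'Z) \neq \emptyset$, and any argument purporting to rule it out must be wrong. Yours indeed is: the failure of the minimality hypothesis of \cref{bbMinModaa} for your particular quadruple says only that $\quot{b_p}$ or~$\quot{b_q}$ is redundant modulo $\langle \quot{a_p}, \quot{a_q} \rangle$; it in no way implies that the surviving generators centralize~$\integer_q$. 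In fact that conclusion contradicts your own data: you chose $y$ with $\langle [s,y] \rangle = \integer_q$, and $[s,y] = [\gamma_q, y]$ with $\langle \gamma_q \rangle = \integer_q$, so $y$ is itself a surviving generator that does \emph{not} centralize~$\integer_q$.

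What is actually needed --- and what the paper supplies --- is a proof that \cref{bbMinModaa} \emph{does} apply whenever some $a_q \in S_q$ lies in $G'Z$; the offending generator is kept and used, not discarded (this is the correct reading of ``we may assume''). The key observation your proposal never makes is the inclusion $G' \subseteq \langle G_p, a_q \rangle$: minimality of~$S$ gives $a_q \notin G_p$, so (since $Z$ and~$\integer_p$ are contained in~$G_p$) the $\integer_q$-component $\gamma_q$ of~$a_q$ is nontrivial and $\integer_q = \langle \gamma_q \rangle = \langle a_q z^{-1} \gamma_p^{-1} \rangle \subseteq \langle G_p, a_q \rangle$. This inclusion is precisely what lets minimality of~$S$ descend to the relevant quotient: if a proper subset~$T$ of $S_q \smallsetminus \{a_q\}$ generated $\quot{G}/\langle \quot{G_p}, \quot{a_q} \rangle$, then $G = \langle T, G_p, a_q \rangle G' = \langle T \cup S_p \cup \{a_q\} \rangle$, contradicting minimality. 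Hence $S_q \smallsetminus \{a_q\}$ is a minimal generating set of $\quot{G}/\langle \quot{G_p}, \quot{a_q} \rangle$, and it is this statement that furnishes the minimality hypothesis of \cref{bbMinModaa}. Without it, your reduction does not go through.
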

Suppose $a_q \in S_q \cap (G' Z)$. By the minimality of~$S$, we know $a_q \notin G_p$. Since $Z$ and~$\integer_p$ are contained in~$G_p$, this implies $G' \subseteq \langle G_p , a_q \rangle$. Therefore, the minimality of~$S$ implies that $S_q \smallsetminus \{a_q\}$ is a minimal generating set of $\quot{G}/\langle \quot{G_p}, \quot{a_q} \rangle$. So \cref{bbMinModaa} applies. This completes the proof of the claim.

Now, applying \cref{G'=p/Z} to~$G_q$ tells us there exist hamiltonian cycles $C_q$ and~$C_q'$ in $\Cay \bigl( \quot{G_q}/\quot{Z} ; S_q \bigr)$, such that $C_q$ and~$C_q'$ have an oriented edge in common, and $\langle (\voltage C_q)^{-1}(\voltage C_q') \rangle = \integer_q$. 

Also, \cref{G'=p} provides hamiltonian cycles $C_p$ and~$C_p'$ in $\Cay \bigl( \quot{G_p} ; S_p \bigr)$, such that $C_p$ and~$C_p'$ have an oriented edge in common, and $\langle (\voltage C_p)^{-1}(\voltage C_p') \rangle = \integer_p$. 

For $r \in \{p,q\}$, write $C_r = (s_{r,i})_{i=1}^{n_r}$ and $C_r' = (t_{r,i})_{i=1}^{n_r}$. Since $C_r$ and~$C_r'$ have an edge in common, we may assume $s_{r,n_r} = t_{r,n_r}$. 

Let 
	\begin{align} \label{CForNot27}
	 C = \Bigl(
	(s_{p,i})_{i=1}^{n_p - 1}, (s_{q,i})_{i=1}^{n_q - 1}, 
	\bigl( 
		 s_{p, n_p - 2i +1}^{-1} , 
		 (s_{q,n_q-j}^{-1})_{j=1}^{n_q - 2},
		s_{p, n_p - 2i}^{-1} ,
		 (s_{q,j})_{j=2}^{n_q - 1}
	\bigr)_{i=1}^{(n_p-1)/2},
	s_{q,n_q}
	\Bigr)
	. \end{align}
Then $C$ is a hamiltonian cycle in $\Cay (\quot{G} ; S)$.

For $r \in \{p,q\}$, a path of the form $[g](s_{r,i})_{i=1}^{n_r - 1}$ appears near the start of~$C$. We obtain a new hamiltonian cycle~$C^r$ in $\Cay\bigl( \quot{G} ; S \bigr)$ by replacing this with $[g](t_{r,i})_{i=1}^{n_r - 1}$. We can also construct a hamiltonian cycle $C^{p,q}$ by making both replacements. Then
	$$ \text{$\langle ( \voltage C)^{-1} ( \voltage C^r) \rangle
	= \langle (\voltage C_r)^{-1}(\voltage C_r') \rangle = \integer_r$} ,$$
and
	$$ \langle ( \voltage C^q)^{-1} ( \voltage C^{p,q}) \rangle
	= \langle ( \voltage C_p)^{-1} ( \voltage C_p') \rangle
	= \integer_p ,$$
so \fullcref{MarusicMethod34}{4} applies (since all four hamiltonian cycles contain the oriented edge $[s_{q,n_q}^{-1}](s_{q,n_q})$).

\begin{subcase}
Assume $\underline{G_p}$ is the nonabelian group of order~$27$ and exponent~$3$. 
\end{subcase}
We have $p = 3$, and \fullcref{27grp}{mu=1} tells us $\mu = 1$; i.e., $G' = \integer_3 \times \integer_{q^\nu}$. Therefore $\underline{G} = G/(G')^q$.

Let $C_p = (s_{p,i})_{i=1}^{27}$ be a hamiltonian cycle in $\Cay \bigl( \underline{G_p}; S_p \bigr)$.
Also, for $r = q$, \cref{G'=p} provides hamiltonian cycles $C_q = (s_{q,i})_{i=1}^{n_q}$ and $C_q' = (t_{q,i})_{i=1}^{n_q}$ in $\Cay(\quot{G_q}; S_q)$, such that $s_{q,n_q} = t_{q,n_q}$ and $(\voltage C_q)^{-1} (\voltage C_q')$ generates~$\integer_{q^\nu}$. Define the hamiltonian cycle $C$ as in \pref{CForNot27} (with $n_p = 27$). We obtain a new hamiltonian cycle~$C^q$ in $\Cay\bigl( \quot{G} ; S \bigr)$ by replacing an occurrence of $(s_{q,i})_{i=1}^{n_q - 1}$ with the path $(t_{q,i})_{i=1}^{n_q - 1}$. Much as in \cref{Not27}, we have 
	$$ \langle (\underline{ \voltage C})^{-1} (\underline{ \voltage C^q}) \rangle 
	= \langle (\underline{\voltage C_q})^{-1}(\underline{\voltage C_q'}) \rangle
	= \underline{\integer_q} ,$$
so $\underline{\voltage C}$ and~$\underline{ \voltage C^q}$ cannot both be trivial. Therefore, applying \cref{FGL} with $N = \integer_q$ provides a hamiltonian cycle in $\Cay(\underline{G}; S)$, and then \cref{FreeLunch} tells us there is a hamiltonian cycle in $\Cay(G;S)$. 
\end{proof}

\section{\texorpdfstring{Proof of \cref{ab=G'}}{Proof of the omitted case}} \label{a=b=3}

In this section, we prove \cref{ab=G'}. Therefore, the following assumption is always in effect:

\begin{assump} \label{a=b=3Assump}
Assume there exist $a,b \in S$, such that $\langle [a,b] \rangle = G'$, $|\quot{a}| = |\quot{b}| = 3$, and $\langle \quot{a} \rangle \neq \langle \quot{b} \rangle$.
\end{assump}

We consider two cases:

\begin{RomanCase}
Assume $\#S > 2$.
\end{RomanCase}

\begin{proof}
Let $c$ be a third element of~$S$, and let $\ell = |\quot{G} : \langle \quot{a}, \quot{b} \rangle|$. (Since $S$ is a minimal generating set, and $G' = \langle [a,b] \rangle \subseteq \langle a, b \rangle$, we must have $\ell > 1$.) We use \cref{MarusicMethod} with $S_0 = \{a,b,c\}$; assume, for simplicity, that $S = S_0$. \Cref{FreeLunch} allows us to assume $G' = \integer_{pq}$.
Let 
	$$ (s_i)_{i=1}^{3\ell} = \bigl( (b, c, b^{-1}, c)^{(\ell-1)/2}, b^2, c^{-(\ell-1)}, b \bigr) , $$
so $(s_i)_{i=1}^{3\ell}$ is a hamiltonian cycle in $\Cay \bigl( \quot{G}/\langle \quot a \rangle ; b, c  \bigr)$. Note that 
	$$s_1 = s_5 = b .$$
From the definition of $(s_i)_{i=1}^{3\ell}$, it is easy to see that
	$ \quot{ \prod_{i=1}^{3\ell} s_i } = \quot{b}^3 = \quot{e} $,
so we have the following hamiltonian cycle~$C_0$ in $\Cay ( \quot{G} ; a, b, c  )$ \csee{HardabcFig}:
	\begin{align*}
	 C_0
	=  \bigl( 
	&(s_j)_{j=1}^{3\ell-3}, a^{-1}, s_{3\ell-2}, s_{3\ell-1},  a^{-1}, s_{3\ell}, 
	 \\& ( a, s_{2j-1}, a^{-1}, s_{2j})_{j=1}^{3(\ell-1)/2},
	s_{3\ell-2},
	 a^{-1}, s_{3\ell-1}, s_{3\ell}
	 \bigr) 
	. \end{align*}
\begin{figure}[t]
\begin{center}
\includegraphics{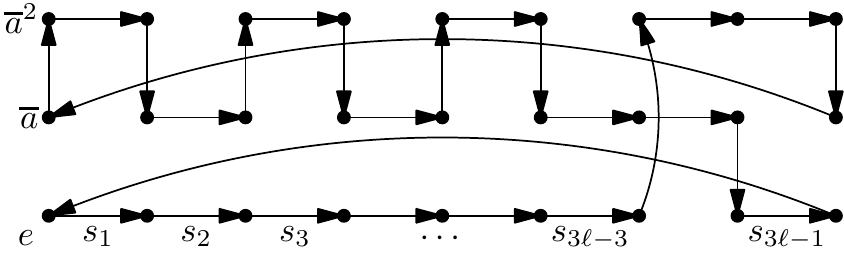}
\caption{A hamiltonian cycle~$C_0$.}
\label{HardabcFig}
\end{center}
\end{figure}
%
%
%
%
%
%
%
%
%
%
%
%
%
%
%

Since $s_1= b$, we see that $C_0$ contains both the oriented edge $(b)$ and the oriented path $[a^{-2}](a, b, a^{-1})$, so \cref{StandardAlteration} provides a hamiltonian cycle~$C_1$, such that 
	$$ \text{$(\voltage C_0)^{-1} (\voltage C_1)$ is a conjugate of $[a, b^{-1}][a, b^{-1}]^a$.} $$
Similarly, since $s_5= b$ and $s_1s_2s_3s_4 = c^2$, we see that $C_1$ contains both the oriented edge $[c^2](b)$ and the oriented path $[c^2a^{-2}](a, b, a^{-1})$, so \cref{StandardAlteration} provides a hamiltonian cycle~$C_2$, such that 
	$$ \text{$(\voltage C_1)^{-1} (\voltage C_2)$ is also a conjugate of $[a, b^{-1}][a, b^{-1}]^a$.} $$
Since no element of~$G$ inverts any nontrivial element of~$G'$ (recall that $|G|$ is odd), this implies that $(\voltage C_i)^{-1} (\voltage C_j)$ generates~$G'$ whenever $i \neq j$. So \fullcref{MarusicMethod34}{3} applies (since all three hamiltonian cycles contain the oriented edge $[s_1](s_2)$.
\end{proof}

\begin{RomanCase}
Assume $\#S = 2$.
\end{RomanCase}

\begin{proof}
We have $S = \{a,b\}$, so $|G| = 9p^\mu q^\nu$. We may assume $p,q > 3$, for otherwise \cref{G=27palpha} applies (perhaps after interchanging $p$ and~$q$). 

One very special case with a lengthy proof will be covered separately:

\begin{assump}
Assume \cref{9pq-hard} below does not provide a hamiltonian cycle in $\Cay(G;S)$.
\end{assump}

Under this assumption, we will always use \cref{FGL} with $N = G'$, so \cref{FreeLunch} allows us to assume $G' = \integer_{pq}$.

Let
	$$ C = (a^{-2}, b^{-1}, a, b^{-1},  a^{-2}, b^2) ,$$
so $C$ is a hamiltonian cycle in $\Cay(\quot{G}; a,b)$.
We have\refnotelower{S=2VoltageCNote}
	\begin{align} \label{S=2VoltageC}
	 \voltage C
	= a^{-2} b^{-1} a b^{-1} a^{-2} b^2 
	= [a, b]^a [a,b]  [a,b]^b  (a^{-3})^{b^2} 
	. \end{align}

Let $\underline{G} = G/\integer_p$, so $\underline{G}' = \integer_{q}$. 
Since $p,q > 3$, we know $\gcd \bigl( |\quot{G}|, |G'| \bigr) = 1$, so $G \iso \quot{G} \ltimes G'$ \cite[Thm.~6.2.1(i)]{Gorenstein-FinGrps}. Therefore $G' \cap Z(G)$ is trivial, so we may 
	$$ \text{assume that $a$ does not centralize $\integer_q$} $$
(perhaps after interchanging $a$ with~$b$). Therefore $a$ acts on~$\integer_q$ via a nontrivial cube root of unity. Since the nontrivial cube roots of unity are the roots of the polynomial $x^2 + x + 1$, this implies that $\underline{[a,b]^{a^{2}} [a, b]^a [a,b]} = \underline{e}$, so
	$$ \underline{[a, b]^a [a,b]}
	= (\underline{[a,b]^{a^{2}}})^{-1}
	= (\underline{[a,b]^{a^{-1}}})^{-1}
	 $$
(since $|\quot{a}| = 3$).
Furthermore, $\underline{a^{-3}} = \underline{e}$ (since $a$ has trivial centralizer in~$\integer_q$). Hence, 
	\begin{align*}
	 \underline{\voltage C}
	&= \underline{[a, b]}^a \,  \underline{[a,b]} \,  \underline{[a,b]}^b  (\underline{a}^{-3})^{b^2}
	\\&= (\underline{[a,b]}^{a^{-1}})^{-1} \, \underline{ [a,b]}^b  \, \underline{e} 
	\\&= (\underline{[a,b]}^{a^{-1}})^{-1} \, \underline{ [a,b]}^b 
	. \end{align*}
Therefore
	\begin{align} \label{S=2-bisainv}
	\text{$\underline{\voltage C} \neq \underline{e}$ \ unless $y^b = y^{a^{-1}}$ for all $y \in \integer_q$.}
	\end{align}
Hence, we may assume $\langle \voltage C \rangle$ contains~$\integer_q$ (by replacing $b$ with its inverse if necessary).

\begin{subcase} \label{aCentsZqSubcase}
Assume $a$ centralizes $\integer_p$. 
\end{subcase}
Since $G' \cap Z(G)$ is trivial, we know that $b$ does not centralize~$\integer_p$. 
Also, we may assume $\langle \voltage C \rangle \neq G'$, for otherwise \cref{FGL} applies.
Therefore $\voltage C$ must project trivially to~$\integer_p$. Fixing $r,k \in \integer$ with 
	$$ \text{$[a,b]^b = [a,b]^r$ \  and  \ $a^{-3} = [a,b]^k$} $$
(and using the fact that $r^2 + r + 1 \equiv 0 \pmod{p}$), we see from \pref{S=2VoltageC} that this means
	$$0
	 \equiv 1 + 1 + r + kr^2
	\equiv 1 - r^2 + k r^2
	\equiv r^2(r - 1 + k)
	 \pmod{p} ,$$
so 
	$$k \equiv 1 - r \pmod{p} .$$
Therefore $k \not\equiv 0 \pmod{p}$ (since $r$ is a primitive cube root of unity). 
Also, since $a$ centralizes~$\integer_p$, we have\refnotelower{aibi}
	$$ [a^{-1}, b^{-1}]^{-kr}
	\equiv  \bigl( [a, b^{-1}]^{-1} \bigr) ^{-kr}
	= \bigl( [a,b]^{b^{-1}} \bigr)^{-kr}
	= [a,b]^{-k} 
	= a^3
	= (a^{-1})^{-3}
	\pmod{\integer_q}
	.$$
Therefore, replacing $a$ and~$b$ with their inverses replaces $k$ with $-kr$ (modulo~$p$), and it obviously replaces $r$ with~$r^2$. Hence, we may assume that we also have
	$$ -kr \equiv 1 - r^2 \equiv r^3 - r^2 = -(1 - r)r^2 \equiv - kr^2 \pmod{p} , $$
so $r \equiv 1 \pmod{p}$. This contradicts the fact that $b$ does not centralize~$\integer_p$.

\begin{subcase}
Assume $a$ does not centralize $\integer_p$.
\end{subcase}
We may assume that the preceding \namecref{aCentsZqSubcase} does not apply when $a$ and~$b$ are interchanged (and perhaps $p$ and~$q$ are also interchanged). Therefore, we may assume that either
\noprelistbreak
	\begin{itemize}
	\item $b$ centralizes both $\integer_p$ and~$\integer_q$, in which case, interchanging $p$ and~$q$ in \pref{S=2-bisainv} tells us that $\voltage C$ projects nontrivially to both $\integer_p$ and~$\integer_q$, so \cref{FGL} applies,
	or
	\item $b$ has trivial centralizer in~$G'$.
	\end{itemize}
Henceforth, we assume $a$ and~$b$ both have trivial centralizer in~$G'$. 

We may assume $y^b = y^a$ for $y \in \integer_q$, by replacing $b$ with its inverse if necessary.
We may also assume $\langle \voltage C \rangle \neq G'$ (for otherwise \cref{FGL} applies). Since $\langle \voltage C \rangle$ contains~$\integer_q$, this means that $\langle \voltage C \rangle$ does not contain~$\integer_p$. By interchanging $p$ and~$q$ in \pref{S=2-bisainv}, we conclude that $x^b = x^{a^{-1}}$ for $x \in \integer_p$. We are now in the situation where  a hamiltonian cycle in $\Cay(G; a,b)$ is provided by \cref{9pq-hard} below.
\end{proof}

The remainder of this section proves \cref{9pq-hard}, by applying \cref{FGL} with $N = \integer_{q^\nu}$.  To this end, the following \namecref{TriangleHC} provides a hamiltonian cycle in $\Cay \bigl( G/\integer_{q^\nu}; S \bigr)$.

\begin{lem} \label{TriangleHC}
Assume
\noprelistbreak
	\begin{itemize}
	\item $G = \integer_{p^\mu} \rtimes (\integer_3 \times \integer_3)
	= \langle x \rangle \rtimes \bigl( \langle a \rangle \times \langle b_0 \rangle \bigr)$,
	with $p > 3$,
	\item $b = x b_0 $,
	\item $x^b = x^{a^{-1}} = x^r$, where $r$ is a primitive cube root of unity in~$\integer_{p^\mu}$,
	\item $k \in \integer$, such that
	\noprelistbreak
		\begin{itemize}
		\item $k \equiv 1 \pmod{3}$,
		\item $k \equiv r \pmod{p^\mu}$,
		and
		\item $0 \le k < 3p^\mu$,
		\end{itemize}
	\item $\ell$ is the multiplicative inverse of~$k$, modulo~$3p^\mu$ (and $0 \le \ell < 3 p^\mu$),
	\item $C =  \bigl( \,
	a, b^{-2}, 
	(a^{-1}, b^2)^{k-1},      a^{-2},      b^2,
	(a, b^{-2})^{\ell-k-1}, a^{-2}, (b^{-2}, a)^{3p^\mu-\ell - 1}
	\, \bigr)$,
	and
	\item $\widetilde C$ is the walk obtained from~$C$ by interchanging $a$ and~$b$, and also interchanging $k$ and~$\ell$.
	\end{itemize}
Then either $C$ or $\widetilde C$ is a hamiltonian cycle in $\Cay ( G; a, b )$.
\end{lem}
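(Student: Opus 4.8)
The plan is to first replace the given presentation of $G$ by a more transparent one. Using $1+r+r^2\equiv0\pmod{p^\mu}$ I would check that $b=xb_0$ has order~$3$ (so that, like~$a$, it gives triangles in the Cayley graph), that $G'=\langle x\rangle\cong\integer_{p^\mu}$ and $\quot G\cong\integer_3\times\integer_3$, and --- most importantly --- that $g:=xab_0$ has order~$3p^\mu$ and generates a cyclic normal subgroup $H=\langle g\rangle$ of index~$3$ with $G=\langle a\rangle\ltimes H$. A short conjugation computation then gives $aga^{-1}=g^k$, $a^{-1}ga=g^\ell$, and $b=a^{-1}g^k$; and expanding $b^3=e$ yields $k+\ell+1\equiv0\pmod{3p^\mu}$, which (since $0\le k,\ell<3p^\mu$) forces the integer identity $k+\ell=3p^\mu-1$. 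One also checks $k\neq\ell$ (otherwise $k^2\equiv1$, incompatible with $k\equiv r\pmod{p^\mu}$ for a primitive cube root~$r$), so exactly one of $k<\ell$, $\ell<k$ holds.

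Since $C$ has exactly $9p^\mu=|G|$ edges, it suffices to show $C$ is closed and visits $|G|$ distinct vertices. The decisive simplification is that each repeated block is a pure power of~$g$: namely $a^{-1}b^2=g^{-1}$, $ab^{-2}=g^k$, and $b^{-2}a=g$. Writing every element uniquely as $a^cg^s$ with $c\in\integer_3$ and $s\in\integer_{3p^\mu}$, right multiplication by $g^{\pm1}$ or $g^k$ changes only~$s$ and fixes the coset~$c$, while the few connector edges $a,b^{-2},a^{-2},b^2$ move between cosets. I would track the walk in these coordinates: each of the three long segments becomes an arithmetic progression in~$s$ inside a single coset, and the two intermediate vertices of each block are explicit affine functions of the block's base value of~$s$ (with slope $k$ or~$\ell$). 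Computing the four segment endpoints and invoking $k+\ell+1\equiv0$, the walk returns to $a^0g^0=e$, which settles closure.

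The heart of the proof is distinctness. For each coset $c\in\{0,1,2\}$ I would collect the list of $s$-values of the vertices lying in that coset: one contiguous run from a segment's boundary points, two further runs from the intermediate vertices of the other two segments (each an arithmetic progression of step $\pm\ell$), together with a handful of connector vertices. After multiplying the whole list by the appropriate unit ($k$ or~$\ell$), every run becomes a block of \emph{consecutive} integers; using $k+\ell=3p^\mu-1$ one checks that these blocks abut perfectly and partition $\integer_{3p^\mu}$ --- provided the middle block is non-empty and correctly oriented, which happens exactly when $k<3p^\mu/2$, i.e.\ $k<\ell$. Hence, when $k<\ell$, all $9p^\mu$ vertices are distinct and $C$ is a hamiltonian cycle. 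The main obstacle is precisely this bookkeeping: keeping the intermediate and connector vertices straight across the three cosets and verifying the three tilings simultaneously (note that the exponent $\ell-k-1$ in~$C$ is itself non-negative only when $k<\ell$, which is consistent with this conclusion).

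Finally, to handle the case $\ell<k$ I would appeal to symmetry. The subgroup $H=\langle g\rangle$ is also normalized by~$b$, with $bgb^{-1}=g^\ell$, and putting $\tilde g=g^k$ gives $G=\langle b\rangle\ltimes\langle\tilde g\rangle$ together with $b\tilde gb^{-1}=\tilde g^\ell$ and $a=b^{-1}\tilde g^\ell$ --- exactly the earlier situation with the roles of $a,b$ and of $k,\ell$ interchanged. Since $\widetilde C$ is by definition $C$ with $a\leftrightarrow b$ and $k\leftrightarrow\ell$, the identical computation shows that $\widetilde C$ is a hamiltonian cycle precisely when $\ell<k$. As exactly one of $k<\ell$, $\ell<k$ occurs, one of $C$, $\widetilde C$ is always a hamiltonian cycle in $\Cay(G;a,b)$.
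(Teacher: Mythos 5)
Your proposal is correct, and it certifies hamiltonicity by a genuinely different verification than the paper, although both rest on the same structural core. The facts you isolate are exactly the ones the paper uses: $g := xab_0 = ba$ generates a cyclic subgroup of order $3p^\mu$ and index~$3$ (because $x$ commutes with $b_0a$), the identity $ab=(ba)^k$ gives $aga^{-1}=g^k$ and $b=a^{-1}g^k$, one has $k\neq\ell$, and the $a\leftrightarrow b$, $k\leftrightarrow\ell$ symmetry reduces everything to the case $k<\ell$. (Your extra identity $k+\ell+1\equiv 0 \pmod{3p^\mu}$ is also correct, via $b^3=g^{k(k^2+k+1)}=e$; the paper never needs it in this explicit form.) The difference is in how one checks that the specific walk $C$ is a hamiltonian cycle. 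You verify $C$ directly: in coordinates $a^cg^s$ each long segment is an arithmetic progression, closure follows from $k+\ell+1\equiv0$, and distinctness follows from three tilings of $\integer_{3p^\mu}$ --- for instance, coset~$0$ receives exactly $\{1,\dots,k\}\cup\{k+1\}\cup\{k+2,\dots,\ell\}\cup\{\ell+1,\dots,3p^\mu-1\}\cup\{0\}$, and after multiplying the lists in cosets $1$ and~$2$ by $k$ (resp.~$\ell$) they likewise become abutting intervals of total length $k+\ell+1=3p^\mu$; I traced these and they do tile, so your bookkeeping goes through, including the observation that the middle exponent $\ell-k-1$ is nonnegative precisely when $k<\ell$. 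The paper instead proceeds by surgery: it starts from the visibly hamiltonian cycle $(b^{-2},a)^{3p^\mu}$, deletes the subpaths $(b^{-2})$ and $[(ba)^k](b^{-2},a,b^{-2})$, reconnects the two resulting paths with the connectors $(a,b^{-2})$ and $[b](a^{-1},b^2)$ (this is where $ab=(ba)^k$ does its work), obtaining a cycle through every vertex except $b^{-1}$, and finally swaps the edge $[(ba)^\ell b](a)$ for the path $[(ba)^\ell b](a^{-2})$, whose interior vertex is exactly $b^{-1}$. In that approach hamiltonicity is inherited through local modifications, so the only bookkeeping is which five vertices are removed and which are restored at each stage; your method costs three global tilings but makes the closure identity and the exact exponents appearing in $C$ completely transparent, and needs no auxiliary cycle.
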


\begin{proof}
Define
	\begin{align*}
	v_{2i + \epsilon} &= (ba)^i b^\epsilon && \text{for $\epsilon \in \{0,1\}$} , \\
	w_j &= (ba)^j b^{-1}
	, \end{align*}
and let $V = \{ v_i \}$ and $W =  \{ w_j \}$.
Note that, since $x^{ab} = x$, we have $|ab| = 3 p^\mu$, so $\#V = 6 p^\mu$ and $\#W = 3 p^\mu$,
 so $G$ is the disjoint union of $V$ and~$W$.
With this in mind, it is easy to see that $C_1 = (b^{-2}, a)^{3p^\mu}$ is a hamiltonian cycle in $\Cay(G;a,b)$.

Removing the edges of the subpaths $(b^{-2})$ and $[(ba)^k](b^{-2}, a, b^{-2})$ from $C_1$ results in two paths:%
\noprelistbreak
	\begin{itemize}
	\item path $P_1$ from $b^{-2} = b$ to $(ba)^k$,
	and
	\item path $P_2$ from~$(ba)^{k+1}b$ to $e$
	(since $(ba)^k(b^{-2} a b^{-2}) = (ba)^k(b a b) = (ba)^{k+1}b$).
	\end{itemize}
The union of $P_1$ and~$P_2$ covers all the vertices of~$G$ \emph{except} the interior vertices of the removed subpaths, namely,
	$$ \text{all vertices except 
	$b^{-1}$, $(ba)^k b^{-1}$, $(ba)^k b$, $(ba)^{k+1}$, and
	$(ba)^{k+1} b^{-1}$} . $$
By ignoring~$y$ in calculation \pref{(ba)^k=aby} below, 
we see that 
	$b^{-1} a^{-1} = (a^{-1} b^{-1})^k $,
which means
	$$ ab = (ba)^k .$$
 Since $b^{-2} = b$, this implies
	$$a b^{-2} = (ba)^k .$$ 
Also, since $a^{-1} = a^2$, we have
	$$ b a^{-1} b^2
	= ba^2b^2
	= (ba)(ab) b
	= (ba) \bigl( (ba)^k \bigr) b
	= (ba)^{k+1} b
	 .$$
Therefore 
	$$ \text{$Q_1 = (a, b^{-2})$ is a path from the end of~$P_2$ to the end of~$P_1$} , $$
and
	$$ \text{$Q_2 = [b](a^{-1}, b^2)$ is a path from the start of~$P_1$ to the start of~$P_2$} . $$
So, letting $-P_1$ be the reverse of the walk~$P_1$, we see that
	$$ C_2 = Q_1 \cup -P_1 \cup Q_2 \cup P_2 $$
is a closed walk. 

Note that the interior vertices of~$Q_1$ are
	$$ a = (ab)b^{-1} = (ba)^k b^{-1} $$
and
	$$ a b^{-1} = (ab) b = (ba)^k b ,$$
and the interior vertices of~$Q_2$ are
	$$ b a^{-1} = ba^2 = (ba) (ab)b^{-1} = (ba) (ba)^k b^{-1} = (ba)^{k+1} b^{-1} $$
and
	$$ b a^{-1} b = \bigl( (ba)^{k+1} b^{-1} \bigr) b = (ba)^{k+1} .$$
These are all but one of the vertices that are not in the union of $P_1$ and~$P_2$, so
	$$ \text{$C_2$ is a cycle that covers every vertex except $b^{-1}$} .$$

Notice that the only $a$-edge removed from~$C_1$ is 
	$[(ba)^k b^{-2}](a) = [(ba)^k b](a)$.
Since 
	$$k^2 \equiv (r^2)^2 = r^4 \equiv r \not\equiv 1 \pmod{p^\mu} ,$$
and $\ell$ is the multiplicative inverse of~$k$, modulo~$3p^\mu$, 
we know $k \neq \ell$, so this removed edge is not equal to $[(ba)^\ell b](a)$.
Therefore $[(ba)^\ell b](a)$ is an edge of~$C_2$.  
Now, we create a walk~$C^*$ by removing this edge from~$C_2$,
and replacing it with the path $[(ba)^\ell b](a^{-2})$. Since
	$$ (ab)^\ell = \bigl( (ba)^k \bigr)^\ell = (ba)^{k\ell} = ba ,$$
we see that the interior vertex of this path is
	$$ [(ba)^\ell b] a^{-1} = [b (ab)^\ell] a^{-1} = [b (ba)] a^{-1} = b^2 = b^{-1} . $$
Therefore $C^*$ covers every vertex, so it is a hamiltonian cycle.

Since $ab = (ba)^k$ and $ba = (ab)^\ell$, it is obvious that interchanging $a$ and~$b$ will also interchange $k$ and~$\ell$. Therefore, we may assume $k < \ell$, by interchanging $a$ and~$b$ if necessary. Then the edge $[(ba)^\ell b](a)$ is in~$P_2$, rather than being in~$P_1$. If we let $P_2'$ be the path obtained by removing this edge from~$P_2$,
and replacing it with $[(ba)^\ell b](a^{-2})$, then we have
	\begin{align*}
	C
	&= \bigl( \,
	(a, b^{-2}), 
	\quad
	(a^{-1}, b^2)^{k-1}, a^{-1},
	\quad
	(a^{-1}, b^2),
	\quad
	(a, b^{-2})^{\ell-k-1}, a^{-2}, (b^{-2}, a)^{3p^\mu-\ell - 1}
	\, \bigr) 
	\\&=	 \ Q_1 \  \cup \ -P_1 \  \cup \ Q_2 \ \cup \ P_2'
	\\& = C^*
	\end{align*}
is a hamiltonian cycle in $\Cay(G; a,b)$.
\end{proof}

\begin{prop} \label{9pq-hard}
Assume
\noprelistbreak
	\begin{itemize}
	\item $\quot{G} \iso \integer_3 \times \integer_3$,
	\item $G' = \integer_{p^\mu} \times \integer_{q^\nu}$, with $p \neq q$ and $p,q > 3$,
	\item $S = \{a,b\}$ has only two elements,
	\item $a$ and~$b$ have trivial centralizer in~$G'$,
	and
	\item $ab$ centralizes $\integer_{p^\mu}$ and $a b^{-1}$ centralizes $\integer_{q^\nu}$.
	\end{itemize}
Then $\Cay ( G; a, b )$ has a hamiltonian cycle.
\end{prop}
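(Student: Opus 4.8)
The plan is to apply \cref{FGL} with $N = \integer_{q^\nu}$. Since $G' = \integer_{p^\mu}\times\integer_{q^\nu}$, the quotient $H := G/\integer_{q^\nu}$ has $H' = \integer_{p^\mu}$ and $H/H' \iso \integer_3\times\integer_3$, so $H$ is of the type governed by \cref{TriangleHC}. Granting that \cref{TriangleHC} produces a hamiltonian cycle $C$ (or $\widetilde C$) in $\Cay(H;S)$, it remains only to show that its voltage generates $N = \integer_{q^\nu}$; then \cref{FGL} immediately yields a hamiltonian cycle in $\Cay(G;S)$. In fact no appeal to \cref{FreeLunch} will be needed, because the voltage will turn out to be an actual generator, not merely a generator modulo~$q$.

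First I would record the consequences of the hypotheses. Because $a$ has trivial centralizer in~$G'$, the element $a^3 \in G'$ (which is centralized by~$a$) must be trivial; likewise $b^3 = e$. Since $a$ then acts on $\integer_{p^\mu}$ as an automorphism of order~$3$ with no nontrivial fixed point, we have $p \equiv 1 \pmod{3}$ and $a$ acts as a primitive cube root of unity~$r$; the hypothesis that $ab$ centralizes $\integer_{p^\mu}$ forces $x^b = x^{a^{-1}} = x^r$ for $x \in \integer_{p^\mu}$, exactly as in \cref{TriangleHC}. A short adjustment (solving one linear congruence modulo~$p^\mu$) replaces~$b$ by $b_0 = x^{-1}b$ with $x\in\integer_{p^\mu}$ so that $[a,b_0]=e$; then $\langle a\rangle\times\langle b_0\rangle$ is a complement to $\integer_{p^\mu}$ in~$H$ and $b = x b_0$, putting $H$ literally in the form required by \cref{TriangleHC}. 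Choosing $k$ by the Chinese Remainder Theorem ($k\equiv 1\pmod{3}$ and $k\equiv r\pmod{p^\mu}$) and $\ell = k^{-1}$ modulo $3p^\mu$ completes the verification.

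The main work is to compute the voltage $\voltage C\in\integer_{q^\nu}$. I would compute it in the other quotient $\widetilde G := G/\integer_{p^\mu}$, into which $\integer_{q^\nu}$ embeds. There $\gamma := [a,b]$ generates $\integer_{q^\nu}$, we have $a^3 = b^3 = e$ (so $a^{\pm2}=a^{\mp1}$ and $b^{\pm2}=b^{\mp1}$), and---crucially, since $ab^{-1}$ centralizes $\integer_{q^\nu}$---both $a$ and~$b$ act on $\integer_{q^\nu}$ by the \emph{same} primitive cube root of unity~$u$ (so also $q\equiv 1 \pmod{3}$). A direct check then gives $(ab)^3 = (ba)^3 = (a^{-1}b^{-1})^3 = e$, so each long block of~$C$ depends only on its length modulo~$3$. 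Using $k\equiv\ell\equiv 1\pmod{3}$, the block $(a^{-1},b^2)^{k-1}$ collapses to the identity, $(a,b^{-2})^{\ell-k-1}$ reduces to $(ab)^2 = a^2b^2\gamma^{-u}$, and $(b^{-2},a)^{3p^\mu-\ell-1}$ reduces to $ba$. Substituting these and then pushing every~$\gamma$ to the right (via $\gamma^a = \gamma^b = \gamma^u$ and $1+u+u^2\equiv 0$) telescopes $\voltage C$ down to $\gamma^{-1}$, which generates $\integer_{q^\nu}$.

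Finally, \cref{FGL} converts this into the desired hamiltonian cycle in $\Cay(G;S)$. If \cref{TriangleHC} returns $\widetilde C$ rather than~$C$, the same computation applies after interchanging $a\leftrightarrow b$ and $k\leftrightarrow\ell$; since $[b,a] = \gamma^{-1}$ is again a generator and the congruences $k\equiv\ell\equiv 1\pmod{3}$ are symmetric, the voltage is once more a generator of $\integer_{q^\nu}$. I expect the genuine obstacle to be the voltage computation of the third paragraph: although it collapses pleasantly once one exploits that $a$ and~$b$ act identically on $\integer_{q^\nu}$ and that the relevant exponents are $\equiv 1 \pmod{3}$, getting there requires carefully tracking the noncommuting rearrangements in the long explicit word~$C$ and checking that the cancellations are exactly as claimed, rather than leaving a stray power of~$\gamma$ that might happen to vanish modulo~$q$.
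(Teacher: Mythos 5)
Your proposal is correct, and its overall skeleton is exactly the paper's: check that $G/\integer_{q^\nu}$ satisfies the hypotheses of \cref{TriangleHC} (using the trivial-centralizer hypothesis to get $a^3=b^3=e$, and the coprime splitting $G \iso G' \rtimes \quot{G}$ to produce $b_0$ with $b = xb_0$), take the hamiltonian cycle $C$ or $\widetilde C$ that the lemma provides, show its voltage generates $\integer_{q^\nu}$, and finish with \cref{FGL} applied to $N = \integer_{q^\nu}$. Where you genuinely diverge is in the voltage computation, and your route is cleaner. The paper computes $\voltage C$ directly in $G$: it first derives the identity $(a^{-1}b^{-1})^k = b^{-1}a^{-1}y^{s^2-1}$ and then threads this error term through the long word defining $C$, arriving at $\voltage C = y^{(s^2-1)(1+s)}$. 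You instead note that $\integer_{q^\nu}$ embeds in $\widetilde G = G/\integer_{p^\mu}$, so it suffices to compute the image of the voltage there; in $\widetilde G$ one has $a^3 = b^3 = (ab)^3 = (ba)^3 = (a^{-1}b^{-1})^3 = e$ (the latter because $ab$ and $ba$ act on the image of $\integer_{q^\nu}$ by $u^2 \not\equiv 1$, hence have trivial centralizer in it), so each long block of $C$ collapses according to its length modulo $3$. With $k \equiv \ell \equiv 1 \pmod{3}$ the word reduces to $ab \cdot ab^{-1} \cdot (ab)^2 \cdot a \cdot ba = b^{-1}a^{-1}ba = [b,a] = \gamma^{-1}$, a generator; this agrees with the paper's answer, since $[a,b]^{-1} = y^{s^2-s}$ and $(s^2-1)(1+s) \equiv s^2 - s \pmod{q^\nu}$. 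What your approach buys is that the only arithmetic needed is $k \equiv \ell \equiv 1 \pmod 3$ and $1+u+u^2 \equiv 0$; the paper's delicate bookkeeping of the term $y^{s^2-1}$ disappears entirely. Two small points you should make explicit: (i) the element $x$ you solve for (so that $b_0 = x^{-1}b$ commutes with $a$) must be a \emph{generator} of $\integer_{p^\mu}$, as \cref{TriangleHC} requires; this follows because $\langle a,b\rangle = G$ forces $\langle x \rangle \rtimes (\langle a \rangle \times \langle b_0 \rangle)$ to be all of $G/\integer_{q^\nu}$; and (ii) the existence of such an $x$ rests on $a$ lying in a complement (Sylow $3$-subgroup) of $\integer_{p^\mu}$ in $G/\integer_{q^\nu}$, which is the same conjugation step the paper performs.
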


\begin{proof}
Since $\gcd \bigl( |\quot{G}|, |G'| \bigr) = 1$, we have 
	$$ G \iso G' \rtimes \quot{G} \iso (\integer_{p^\mu} \times \integer_{q^\nu}) \rtimes (\integer_3 \times \integer_3) .$$
Write $\integer_{p^\mu} = \langle x \rangle$ and $\integer_{q^\nu} = \langle y \rangle$.
Since $a$ does not centralize any nontrivial element of~$G'$, we may assume $a \in \integer_3 \times \integer_3$ (after replacing it by a conjugate). Write $b = \gamma b_0$, with $\gamma \in G'$ and $b_0 \in \integer_3 \times \integer_3$. Since $\langle a,b \rangle = G$, we must have $\langle \gamma \rangle = G'$, so we may assume $\gamma = xy$; therefore $b = xyb_0$.

Choose $r \in \integer$ with $x^{a^{-1}} = x^r$. Since $|a| = 3$ and $a$ does not centralize any nontrivial element of~$\integer_{p^\mu}$, we know that $r$ is a primitive cube root of unity, modulo~$p^\mu$. Also, since $ab$ centralizes~$\integer_{p^\mu}$, we have $x^b = x^r$.

Define $k$ and~$\ell$ as in \cref{TriangleHC}. Then, letting $\underline{G} = G / \integer_{q^\nu}$ (and perhaps interchanging $a$ with~$b$), \cref{TriangleHC} tells us that
	$$ C = \bigl( a, b^{-2}, 
	(a^{-1}, b^2)^{k-1},      a^{-2},       b^2,
	(a, b^{-2})^{\ell-k-1}, a^{-2}, (b^{-2}, a)^{3p^\mu-\ell - 1} \bigr) $$
is a hamiltonian cycle in $\Cay \bigl( \underline{G} ; a, b \bigr)$.

To calculate the voltage of~$C$, choose $s \in \integer$ with 
	$y^a = y^s $,
and let
	$$ y_1 = y^{s^2-(1 + s + s^2 + \cdots + s^{k-1})}
	= y^{s^2 - 1} $$
(since $1 + s + s^2 \equiv 0 \pmod{q}$ and $k \equiv 1 \pmod{3}$),
and note that
	\begin{align}	\label{(ba)^k=aby}
	(a^{-1} b^{-1})^k
	&= \bigl( a^{-1} (xy b_0)^{-1} \bigr)^k
	\\&= \bigl( a^{-1} b_0^{-1} y^{-1} x^{-1} \bigr)^k
	\notag\\&= x^{-k} \bigl( a^{-1} b_0^{-1} y^{-1}  \bigr)^k
	&& \text{($x$ commutes with $a^{-1} b_0^{-1}$ and~$y$)}
	\notag\\&= x^{-r} \bigl( a^{-1} b_0^{-1})^k  y^{-(1 + s + s^2 + \cdots + s^{k-1})} 
	&& \begin{pmatrix}
	\text{$k \equiv r \pmod{p^\mu}$ and} \\
	\text{$y^{a^{-1} b_0^{-1}} = y^{a^2 b_0^2} = y^{s^4} = y^s$}
	\end{pmatrix}
	\notag\\&= x^{-r} b_0^{-1} a^{-1} y^{-s^2} y_1
	&& \begin{pmatrix}
	\text{$a$ and~$b_0$ commute, $k \equiv 1 \pmod{3}$,} \\
	\text{and definition of~$y_1$}
	\end{pmatrix}
	\notag\\&= b_0^{-1} x^{-1} y^{-1} a^{-1}  y_1
	&& \text{($x^r = x^{b_0}$ and $y^{s^2} = y^{a^2} = y^{a^{-1}}$)}
	\notag\\&= b^{-1} a^{-1}  y^{s^2-1}
		&& \text{($b = xy b_0$ and $y_1 = y^{s^2-1}$)}
	\notag.\end{align}
Therefore 
	\begin{align*}
	\voltage C
	&= a b^{-2}
	(a^{-1} b^2)^{k-1} a^{-2} b^2
	(a b^{-2})^{\ell-k-1} a^{-2} (b^{-2} a)^{3p^\mu-\ell - 1}
	\\&= a b (a^{-1} b^{-1})^{k-1} a b 
	\bigl( b (a b)^{\ell-k-1} a \bigr) (b a)^{3p^\mu-\ell - 1}
		&& \text{($|a| = |b| = 3$)}
	\\&= a b (a^{-1} b^{-1})^{k} (a^{-1} b^{-1})^{-1} a b  \bigl( ba \bigr)^{\ell-k} (b a)^{-\ell - 1}
		&& \text{($|ba| = 3 p^\mu$)}
	\\&= ab (a^{-1} b^{-1})^{k} (ba) a b  (ba)^{-k} (ba)^{-1}
	\\&= ab \bigl( b^{-1} a^{-1} y^{s^2-1} \bigr) b a^2 b  \bigl( b^{-1} a^{-1} y^{s^2-1} \bigr) (a^{-1} b^{-1})
		&& \begin{pmatrix}  (ba)^{-k} = (a^{-1} b^{-1})^k \\  \phantom{(ba)^{-k} } = b^{-1} a^{-1} y^{s^2-1} \end{pmatrix}
	\\&= y^{s^2-1} b a y^{s^2-1} a^{-1} b^{-1}
	\\&= y^{s^2-1} y^{(s^2-1)s}
		&& \text{($y^{a^{-1} b^{-1}} = y^{a^2 b^2} = y^{s^4} = y^s$)}
	\\&= y^{(s^2 - 1)(1+s)}
	. \end{align*}
Since $s$ is a primitive cube root of unity modulo~$q^{\nu}$, we know $s \not\equiv \pm1 \pmod{q}$. Therefore, the exponent of~$y$ is not divisible by~$q$, which means $\voltage C \notin \langle y^q \rangle$, so $\voltage C$ generates~$\integer_{q^\nu}$. Hence, \cref{FGL} provides the desired hamiltonian cycle in $\Cay(G; a,b)$.
\end{proof}


\newpage

\appendix

\makeatletter
\renewcommand{\@oddhead}{\em \hfil Appendix: Notes to aid the referee\hfil\hbox to 0pt{\hss\thepage}}
\renewcommand{\@evenhead}{\em \hbox to 0pt{\thepage\hss}\hfil Appendix: Notes to aid the referee\hfil}
\makeatother

\section{Notes to aid the referee}

\bigskip\hrule width\textwidth \bigbreak

\begin{aid} \label{27grp-regularPf}
We may assume $(G')^3$ is trivial (by modding it out), so $G' =  Z(G)$. Therefore $[a,b] \in G' =  Z(G)$, so we have $[b,a^2] = [b,a]^2$. We also have $[a,b]^3 = e$, since $(G')^3$ is trivial. Therefore
	\begin{align*}
	(ab)^3 
	&= (ab)(ab)(ab)
	=a^3 b^{a^2} b^a b
	= a^3 \bigl( b [b, a^2] \bigr) \bigl( b [b, a] \bigr) \bigl( b \bigr)
	\\&= a^3 \bigl( b [b, a]^2 \bigr) \bigl( b [b, a] \bigr) \bigl( b \bigr)
	= a^3 b^3 \,  [b, a]^3
	= a^3 b^3 \,  e
	= a^3 b^3
	. \end{align*}
\end{aid}

\begin{aid} \label{G'=p/ZNontrivial}
Since we are only trying to show that something is nontrivial, there is no harm in modding out~$Z$; thus, we may assume $Z$ is trivial. Note that:
	\begin{itemize}
	\item $Z \cap G'$ is trivial, since $Z$ is in the center, but $a$~does not centralize $G' = \integer_p$. So $G'$ is still nontrivial after we mod out~$Z$.
	\item Since $a^n \in G'Z = G'$, and $a$~obviously centralizes~$a^n$, we have $a^n = e$.
	\end{itemize}
Write $b = a^i \gamma$ with $\gamma \in G'Z = G'$. We have
	\begin{align*}
	\voltage C_1
	&= b a^{-(i-1)} b a^{n-i-1}
	\\&= (a^i \gamma) a^{-(i-1)} (a^i \gamma) a^{-i-1}
		&& \text{($b = a^i \gamma$ and $a^n = e$)}
	\\&= a^i \gamma a \gamma a^{-i-1}
	\\&=(\gamma^a \gamma)^{a^{-i-1}}
	\end{align*}
This is obviously nontrivial, since $a$ (being of odd order) cannot invert~$\gamma$.
From \cref{gVoltage}, we know $\voltage C_1 = (\voltage C_2)^a$, so
	$$ (\voltage C_1)^{-1} (\voltage C_2)
	= \bigl( (\voltage C_2)^a \bigr)^{-1} (\voltage C_2)
	= [a, \voltage C_2]
	\neq e ,$$
because $a$ does not centralize~$G'$.
\end{aid}

\begin{aid} \label{aG'=bG'Voltage}
Write $C_1 = [g](s_i)_{i=1}^n$, with $\alpha_1$ being the final edge, so $C_2 = [g] \bigl( (s_i)_{i=1}^{n-1}, b \bigr)$. Then \cref{gVoltage} tells us
	$$ (\voltage C_2)^g = \left( \prod_{i=1}^{n-1} s_i \right) b 
	= \left( \prod_{i=1}^{n-1} s_i \right) (a\gamma)
	= \left( \prod_{i=1}^{n} s_i \right) \, \gamma
	=  (\voltage C_1)^g \, \gamma
	. $$
A similar calculation applies to $(\voltage C_2)^{-1} (\voltage C_3)$.
Then
	$$(\voltage C_1)^{-1} (\voltage C_3) = \Bigl( (\voltage C_1)^{-1} (\voltage C_2) \Bigr) \Bigl( (\voltage C_2)^{-1} (\voltage C_3) \Bigr) = \gamma_1 \gamma_2 .$$
\end{aid}

\begin{aid} \label{exhaustive}
Let us briefly explain why these cases are exhaustive.

\setcounter{case}{0}

\begin{case}
Assume there exist $a,b \in S$, such that $\langle [a,b] \rangle = G'$.
\end{case}
We may assume $\quot{b} \notin \langle \quot{a} \rangle$ and $\quot{a} \notin \langle \quot{b} \rangle$, for otherwise \cref{bina} applies (perhaps after interchanging $a$ and~$b$).
Then we may assume $|\quot{a}| = |\quot{b}| = 3$, for otherwise \cref{bnotina} applies (perhaps after interchanging $a$ and~$b$). Furthermore, since $\quot{b} \notin \langle \quot{a} \rangle$, we obviously have $\langle \quot{a} \rangle \neq \langle \quot{b} \rangle$. So \cref{ab=G'} applies.

\begin{case}
Assume there exist $a,b,c \in S$, such that $\langle a, b, c \rangle' = G'$.
\end{case}
Since $\langle a, b, c \rangle' = G'$ is cyclic, we know 
	$$\bigl\langle [s,t] \mid s,t \in \{a,b,c\} \bigr\rangle = \langle a, b, c \rangle'  = \integer_{p^\mu} \times \integer_{q^\nu} $$
(see \cite[Lem.~3.12]{GhaderpourMorris-Nilpotent}). 
Therefore, for $r \in \{p,q\}$, there exist $x_p,y_p \in \{a,b,c\}$, such that $\integer_{r^*} \subseteq \langle [x_r,y_r] \rangle$. There cannot be four distinct elements of $\{a,b,c\}$, so we may assume $x_p = x_q$. Then, letting $a = x_p$, $b = y_p$, and $c = y_q$, we have $\integer_{p^\mu} \subseteq \langle [a,b] \rangle$ and $\integer_{q^\nu} \subseteq \langle [a,c] \rangle$.

We may assume $\quot{b} \notin \langle \quot{a} \rangle$, for otherwise \cref{bandcina} applies (perhaps after interchanging $a$ and~$b$). Now, either \cref{bcNotIna} or \cref{abcRemainder} applies, depending on whether $c \notin \langle \quot{a} \rangle$ or $c \in \langle \quot{a} \rangle$, respectively.

\begin{case}
Assume there do not exist $a,b,c \in S$, such that $\langle a, b, c \rangle' = G'$.
\end{case}
Then \cref{G'NotIn<abc>}  applies.
\end{aid}

\begin{aid} \label{b=agamma}
Since $\quot{b} = \quot{a}^k$ and $G' = \integer_p \times \integer_q$, we may write $b = a^k \gamma_1 \lambda_1$, for some $\gamma_1 \in \integer_p$ and $\lambda_1 \in \integer_q$.
We have $[a,b] \equiv e \pmod{\integer_p}$ (since $\langle [a,b] \rangle = \integer_p$), so
	$$ e \equiv [a, b] = [a, a^k \gamma_1 \lambda_1] = [a, \gamma_1 \lambda_1] \equiv  [a, \lambda_1] \quad \pmod{\integer_p} .$$
Since \cref{aCents->proper} tells us that $a$ does not centralize~$\integer_q$, this implies $\lambda_1 = e$. Therefore $b = a^k \gamma_1$, as claimed.  

Similarly, we have $c = a^\ell \gamma_2$, for some $\gamma_2 \in \integer_q$.
\end{aid}

\begin{aid} \label{Calculate:bandcina}
We  have
	\begin{align*}
	\voltage C_2
	&= (a^{-(\ell-1)} c) \,  (b a^{-(k-1)} b ) \, (a^{n-k-\ell-2} c)
	\\&= (a \gamma_2) \, (a^k \gamma_1 a \gamma_1) (a^{-k-2} \gamma_2)
	&& \begin{pmatrix} c = a^\ell \gamma_2, \\ b = a^k \gamma_1, \\ a^n = e \end{pmatrix}
	\\&=  \gamma_2^{a^{-1}} \, (a^{k+1} \gamma_1 \gamma_1^{a^{-1}}a^{-k-1}) \, \gamma_2
	\\&= (\gamma_1 \gamma_1^{a^{-1}})^{a^{-k-1}}   ( \gamma_2^{a^{-1}} \gamma_2)
	&& \text{($G'$ is abelian)}
	.\end{align*}
\end{aid}

\begin{aid} \label{GensG':bandcina}
Since $[a,b]$ is a generator of~$\integer_p$, it is nontrivial, so $b \neq a^k$. Therefore $\gamma_1$ is nontrivial, so it generates~$\integer_p$. Also, since $|G|$ is odd, we know $a$ does not invert~$\integer_p$. Therefore $\gamma_1 \gamma_1^{a^{-1}} \neq e$, so it also generates~$\integer_p$. Hence, the conjugate $(\gamma_1 \gamma_1^{a^{-1}})^{a^{-k-1}}$ is also a generator of~$\integer_p$.

Similarly, $( \gamma_2^{a^{-1}} \gamma_2)$ generates~$\integer_q$. So the product $(\gamma_1 \gamma_1^{a^{-1}})^{a^{-k-1}}( \gamma_2^{a^{-1}} \gamma_2)$ generates $\integer_p \times \integer_q = G$.
\end{aid}

\begin{aid} \label{voltXXp}
Write $X = [g](x_i)_{i=1}^n$, where $(x_i)_{i=1}^{2A-1} = \bigl( a^{A-1}, b, a^{-(A-1)} \bigr)$, and let 
	$\pi = \prod_{i=2A}^n x_i$, so
	$$\text{$(\voltage X)^g = \bigl( a^{A-1} b a^{-(A-1)} \bigr) \pi$ 
	\quad and \quad
	$(\voltage X^p)^g = \bigl( a^{-(A-1)} b a^{A-1} \bigr) \pi$}
	. $$
Then
	\begin{align*} 
	\Bigl( (\voltage X)^{-1} (\voltage X^p) \Bigr)^g
	&= \Bigl( \bigl( a^{A-1} b a^{-(A-1)} \bigr) \pi \Bigr)^{-1}
	\Bigl(  \bigl( a^{-(A-1)} b a^{A-1} \bigr) \pi \Bigr)
	\\&= \pi^{-1} \Bigl( \bigl( a^{A-1} b a^{-(A-1)} \bigr)^{-1}
	 \bigl( a^{-(A-1)} b a^{A-1} \bigr) \Bigr)  \pi
	 , \end{align*}
so $(\voltage X)^{-1} (\voltage X^p)$ is a conjugate of 
	$\bigl( a^{A-1} b a^{-(A-1)} \bigr)^{-1} \bigl( a^{-(A-1)} b a^{A-1} \bigr)$.

Also, we have
	\begin{align*}
	\bigl( a^{A-1} b a^{-(A-1)} \bigr)^{-1} & \bigl( a^{-(A-1)} b a^{A-1} \bigr)
	\\&= \bigl( a^{A-1} b^{-1} a^{-(A-1)} \bigr) \bigl( a^{-(A-1)} b a^{A-1} \bigr)
	\\&= a^{A-1} \bigl( b^{-1} a^{-(A-1)} b a^{A-1}\bigr) a^{-(A-1)} \bigl(b^{-1} a^{-(A-1)} b a^{A-1} \bigr)
	\\&= \bigl[ b , a^{A-1}\bigr]^{a^{1-A}} \bigl[ b, a^{A-1} \bigr]
	\\&= \bigl[ b , a^{A-1}\bigr]^{a} \bigl[ b, a^{A-1} \bigr]
	&& \hskip-1.25in \begin{pmatrix}  
	\text{$a^A \in G'$ and $G'$ is abelian,}
	\\ \text{so $a^A$ centralizes $G'$}
	\end{pmatrix}
	.\end{align*}
\end{aid}

\begin{aid} \label{3SubgrpLem}
We have
	\begin{align*}
	e 
	&= [a, b^{-1}, c]^b [b, c^{-1}, a]^c [c, a^{-1}, b]^a 
	&& \text{(Three-Subgroup Lemma)}
	\\&= [a, b^{-1}, c]^b \cdot e^c \cdot [c, a^{-1}, b]^a 
	\\&= [a, b^{-1}, c]^b [c, a^{-1}, b]^a 
	, \end{align*}
so
	$$ [a, b^{-1}, c]^b = \bigl( [c, a^{-1}, b]^a \bigr)^{-1} .$$
Since the left-hand side is in~$\integer_p$ and the right-hand side is in~$\integer_q$, we conclude that they are both in $\integer_p \cap \integer_q = \{e\}$. So $[a, b^{-1}, c] = [c, a^{-1}, b] = e$. Exactly the same argument applies with any or all of $a$, $b$, and~$c$ replaced by their inverses, so we have $[a, b, c] = [c, a, b] = e$. Since $\langle [a,b] \rangle = \integer_p$ and $\langle [c,a] \rangle = \langle [a,c] \rangle = \integer_q$, this implies that $c$ centralizes~$\integer_p$, and $b$~centralizes~$\integer_q$.
\end{aid}

\begin{aid} \label{SpSq}
Assume, for simplicity, that $(G')^{pq} = \{e\}$, so $G = \underline{G}$. Let
	$$ S_p = \{\, a \in S \mid \exists b \in S, \ \langle [a,b]\rangle = \integer_p \,\} \cup \bigl( S \cap Z(G) \bigr) $$
and
	$$ S_q = \{\, a \in S \mid \exists b \in S, \ \langle [a,b]\rangle = \integer_q \,\} . $$

For any $a \in S \smallsetminus Z(G)$, there is some $b \in S$, such that $[a,b] \neq e$. Since, by assumption, we have $\langle [a,b] \rangle \neq G'$, we must have either $\langle [a,b] \rangle = \integer_p$ or  $\langle [a,b] \rangle = \integer_q$. So $a \in S_p$ or $a \in S_q$. Therefore, $S_p \cup S_q = S$.

Suppose $a \in S_p \cap S_q$. Then there exist $b,c \in S$, such that $\langle [a,b]\rangle = \integer_p$ (because $a \in S_p$) and  $\langle [a,c]\rangle = \integer_q$ (because $a \in S_q$). Therefore $\langle [a,b], [a,c] \rangle = \integer_p \times \integer_q = G'$, which contradicts the assumption of this \namecref{G'NotIn<abc>}.

So $S_p$ and~$S_q$ do form a partition of~$S$. Furthermore, it is clear that both sets are nonempty, because $\langle\, [s,t] \mid s,t \in S \rangle = G' = \integer_p \times \integer_q$ \cite[Lem.~3.12]{GhaderpourMorris-Nilpotent}.
\end{aid}

\begin{aid} \label{Gaa}
\ \break
\centerline{\includegraphics{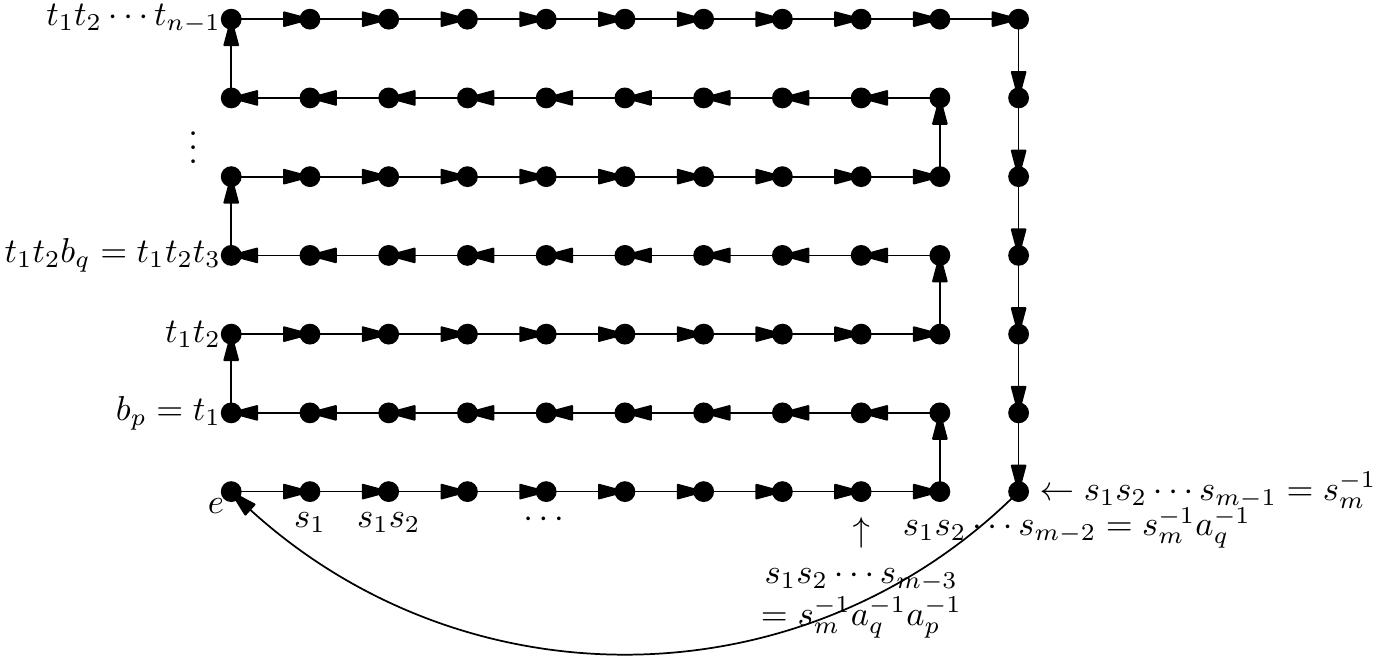}}
\end{aid}
%
%
%
%
%
%
%
%
%
%
%
%
%

\begin{aid} \label{GaaVoltage}
Let $C_0 = (u_i)_{i=1}^{mn}$, so
	$$ \voltage C_0 = u_1 u_2 \cdots u_{mn} .$$
To calculate $\voltage C_1$, we 
	\begin{itemize}
	\item replace some appearance of $b_p^{-1}$ with $a_q^{-1} b_p^{-1} a_q$,
	and
	\item replace some appearance of $a_p b_p a_p^{-1}$ with $b_p$.
	\end{itemize}
In other words, we multiply by the quantities 
	$$ \bigl( b_p^{-1} \bigr)^{-1} \bigl( a_q^{-1} b_p^{-1} a_q \bigr)
	= [b_p^{-1} , a_q ] $$
and
	$$ \bigl( a_p b_p a_p^{-1} \bigr)^{-1} \bigl( b_p \bigr) 
	= [a_p^{-1} , b_p] $$
at certain points, so
	\begin{align*}
	 \voltage C_1 
	 &= u_1 u_2 \cdots u_k  \,  [b_p^{-1} , a_q ] \, u_{k+1} \cdots u_\ell [a_p^{-1} , b_p] u_{\ell+1} \cdots u_{mn}
	\\&= u_1 u_2 \cdots u_{mn} [b_p^{-1} , a_q ]^g [a_p^{-1} , b_p]^h
	, \end{align*}
where $g = u_{k+1} u_{\ell+2} \cdots u_{mn}$ and $h = u_{\ell+1}  u_{\ell+2} \cdots u_{mn}$.
\end{aid}

\begin{aid} \label{S=2VoltageCNote}
	\begin{align*}
	 [a, b]^a [a,b]  [a,b]^b  (a^{-3})^{b^2} 
	 &= a^{-1} (a^{-1} b^{-1} a b) a \cdot (a^{-1} b^{-1} a b) \cdot b^{-1}(a^{-1} b^{-1} a b) b \cdot b^{-2} a^{-3} b^2
	 \\&= (a^{-1} a^{-1}) ( b^{-1} a) (b a  a^{-1} b^{-1}) (a b  b^{-1}a^{-1}) b^{-1} \bigl(  a (b b  b^{-2}) a^{-3} \bigr) b^2
	 \\&= (a^{-2}) (b^{-1} a) (e) (e) b^{-1} \bigl( a^{-2} \bigr) b^2
	 \\&= a^{-2} b^{-1} a b^{-1} a^{-2} b^2
	 \\&= \voltage C
	. \end{align*}
\end{aid}

\begin{aid} \label{aibi}
We have
	$$ [a^{-1}, b^{-1}] 
	= aba^{-1} b^{-1} 
	= a(ba^{-1} b^{-1} a) a^{-1}
	= [b^{-1},a]^{a^{-1}}
	= [b^{-1},a]
	= [a, b^{-1}]^{-1}$$
and
	$$ [a, b^{-1}]^{-1}
	= [b^{-1}, a]
	= b a^{-1} b^{-1} a
	= b (a^{-1} b^{-1} a b) b^{-1}
	= b [a, b]b^{-1}
	= [a, b]^{b^{-1}}
	.$$
\end{aid}

\end{document}